\newtheorem{theorem}{Theorem}[section]
\newtheorem{corollary}[theorem]{Corollary}
\newtheorem{lemma}[theorem]{Lemma}
\newtheorem*{lemma*}{Lemma}
\newtheorem{proposition}[theorem]{Proposition}
\newtheorem{question}[theorem]{Question}
\theoremstyle{remark}
\newcommand{\bfR}{\mathbf{R}}
\newcommand{\cgF}{\mathcal{F}}
\newcommand{\cgL}{\mathcal{L}}
\newcommand{\cgI}{\mathcal{I}}
\newcommand{\cgJ}{\mathcal{J}}
\newcommand{\bbB}{\mathbb{B}}
\newcommand{\bbE}{\mathbb{E}}
\newcommand{\bbP}{\mathbb{P}}
\newcommand{\bbR}{\mathbb{R}}
\newcommand{\sdim}{\operatorname{sdim}}
\newcommand{\GLR}{\operatorname{GLR}}
\newcommand{\Min}{\operatorname{Min}}
\newcommand{\Max}{\operatorname{Max}}
\newcommand{\sa}{\operatorname{sa}}
\newcommand{\se}{\operatorname{se}}
\newcommand{\wttbin}{\operatorname{bin}}
\newcommand{\wttbcn}{\operatorname{bcn}}
\begin{document}

\title[RANDOM POSETS]{Random Bipartite Posets and Extremal Problems}

\author[BIR\'{O}]{Csaba Bir\'{o}}
\address{Department of Mathematics\\
 University of Louisville\\
 Louisville, Kentucky 40292}
\email{csaba.biro@louisville.edu}

\author[HAMBURGER]{Peter Hamburger}
\address{Purdue University, IN\\}
\email{hamburge@pfw.edu}

\author[KIERSTEAD]{H. A. Kierstead}
\address{School of Mathematical and Statistical Sciences\\
  Arizona State University\\
  Tempe, Arizona 85287}
\email{kierstead@asu.edu}

\author[P\'{O}R]{Attila P\'{o}r}
\address{Department of Mathematics\\
 Western Kentucky University\\
 Bowling Green, Kentucky 42101}
\email{attila.por@wku.edu}

\author[TROTTER]{William T. Trotter}
\address{School of Mathematics\\
  Georgia Institute of Technology\\
  Atlanta, Georgia 30332}
\email{trotter@math.gatech.edu}

\author[WANG]{Ruidong Wang}
\address{School of Mathematics\\
  Georgia Institute of Technology\\
  Atlanta, Georgia 30332}
\email{rwang49@math.gatech.edu}

\date{February 21, 2020}

\subjclass[2010]{06A07, 05C35}

\keywords{Poset, bipartite poset, dimension, standard example}

\begin{abstract}
Previously, Erd\H{o}s, Kierstead and Trotter~\cite{ErKiTr91} investigated 
the dimension of random height~$2$ partially ordered sets.  Their 
research was motivated primarily by two goals:  (1)~analyzing the relative 
tightness of the F\"{u}redi-Kahn upper bounds on dimension in terms of 
maximum degree; and (2)~developing machinery for estimating the
expected dimension of a random labeled poset on $n$ points.  For these
reasons, most of their effort was focused on the case $0<p\le 1/2$.
While bounds were given for the range $1/2\le p <1$, the relative accuracy of the
results in the original paper deteriorated as $p$ approaches~$1$. 

Motivated by two extremal problems involving conditions that force a 
poset to contain a large standard example, we were compelled to 
revisit this subject, but now with primary emphasis on the range 
$1/2\le p<1$.  Our sharpened analysis shows that as $p$ 
approaches~$1$, the expected value of dimension increases and
then decreases, answering in the negative a question posed in the original paper.
Along the way, we apply inequalities of Talagrand and Janson,
establish connections with latin rectangles and the Euler product function,
and make progress on both extremal problems.
\end{abstract}

\maketitle

\section{Introduction}\label{sec:introduction}

This paper is concerned primarily with the combinatorics of finite \textit{partially ordered
sets}, also called \textit{posets}, but to motivate our line of research,
we start with a brief discussion of analogous
questions for graphs.  For a  graph $G$, let $\omega(G)$ denote the
\textit{clique number} of $G$, the maximum number of vertices in a complete subgraph of
$G$.  Also, let $\chi(G)$ denote the \textit{chromatic number} of $G$, the 
least number of colors required for a proper coloring of $G$. 
Let $N$ denote the set of positive integers, and when
$n\in N$, we write $[n]$ for $\{1,...,n\}$.  

Let $n\in N$, and let $G$ be a graph on $n$ vertices.  Then we have the trivial inequality 
$\omega(G)\le\chi(G)\le n$.  Furthermore, if $\chi(G) = n$, then $\omega(G)=n$.
We then ask whether these statements are ``stable,'' i.e., if $G$ is a graph on $n$
vertices and $\chi(G)$ is close to $n$, must $\omega(G)$ also be close to $n$?  More
formally:

\begin{question}[Question 1 for Graphs]
Does there exist 
a function $f:N\rightarrow N$ such that for every $c\in N$, if $n>f(c)$,
$G$ is a graph on $n$ vertices, and $\chi(G)\ge n-c$, then 
$\omega(G)\ge n-f(c)$?  
\end{question}

The answer to Question~1 for graphs is easily seen to
be yes, since we may take $f(c)=2c$.  To see that this function
satisfies the desired property,  we simply carry out the following
iterative process:\quad While $G$ is not a complete graph, choose two non-adjacent vertices
and remove them.  Each such operation decreases the size of the
graph by~$2$ but lowers the chromatic number by at most~$1$.   So the operation must
halt in at most $c$ steps.

Here is a second question of a related nature.

\begin{question}[Question 2 for Graphs]
For integers $k$ and $n$ with $k\ge2$ fixed
and $n\rightarrow\infty$, what is the maximum value $g(k,n)$ of 
$\chi(G)$ among all graphs $G$ on $n$ vertices with $\omega(G)<k$? 
\end{question}

For example, when $k=3$, a graph $G$ with $\omega(G)<3$ is said to be 
\textit{triangle-free}, and it is well known that $g(3,n)$, the maximum 
chromatic number of a triangle-free graph on $n$ vertices, is $\Theta(\sqrt{n/\log n})$.

The primary goal of this paper is to investigate analogous questions for posets.
We assume that readers are familiar with basic notation and terminology for
posets, including comparable and incomparable pairs of points; chains and antichains;
minimal and maximal elements; and linear extensions.  Beyond these basics, 
we will include all essential notation and terminology for the results
presented in this paper.  

To develop the poset analogue of chromatic number,
we have the following definitions.  Let $P$ be a poset.  
A non-empty family $\cgF=\{L_1,\dots,L_d\}$ of linear extensions of $P$ is called
a \textit{realizer of $P$} when $x\le y$ in $P$ if and only if $x\le y$ in $L_j$
for each $j\in[d]$. Dushnik and Miller~\cite{DusMil41}
defined the \textit{dimension} of a poset $P$, 
denoted $\dim(P)$, as the least positive integer $d$ for which 
there is a realizer $\cgF=\{L_1,\dots,L_d\}$ of $P$. 
Analogies between dimension for posets and chromatic number for graphs have
been widely studied, and indeed the book chapter~\cite{Trot19+} is devoted
entirely to this topic.

For the poset analogue of a clique, we have the following construction.
For an integer $d\ge2$, let $S_d$ be the height~$2$ poset
with $\Min(S_d)=\{a_1,\dots,a_d\}$, $\Max(S_d)=\{a'_1,\dots,
a'_d\}$ and $a_i<a'_j$ in $S_d$ if and only if $i\neq j$.
Clearly, $\dim(S_d)=d$, and posets in the
family $\{S_d:d\ge 2\}$ are called \textit{standard examples}.
For a poset $P$, we define the \textit{standard example number} of
$P$, denoted $\se(P)$, as follows.  Set $\se(P)=1$ if $P$ does not contain a subposet
isomorphic to the standard example $S_2$; otherwise $\se(P)$ is the
largest $d\ge2$ for which $P$ contains a subposet isomorphic to the
standard example $S_d$.  We then have the trivial inequality $\dim(P)\ge\se(P)$.
As is well known, for every $d\ge2$, there is a poset $P$ with $\se(P)=1$ and
$\dim(P)=d$.  Nevertheless, it is of interest to study classes of posets where
large dimension requires large standard example number.

When $n\in N$ and $G$ is a graph on $n$ vertices, the inequality $\chi(G)\le n$ 
is trivial, as is the assertion that the inequality is tight only when $G$ is
a complete graph on $n$ vertices.  The analogous results for posets are more substantive.
Hiraguchi~\cite{Hira55} proved that if $n\ge2$ and $P$ is a poset on $2n+1$
points, then $\dim(P)\le n$.   Kimble~\cite{Kimb73} proved\footnote{We 
refer the reader to the discussion in~\cite{Trot19+} about subtleties of this
proof, and we note that it does not hold when $n=2$ or when $n=3$.} that if $n\ge4$, this
inequality is tight only when $P$ contains the standard example $S_n$.
For the poset analogue of Question~1 for graphs, we then have:

\begin{question}[Question 1 for Posets]
Does there exist a function $\sa:N\rightarrow N$
such that for every $c\in N$, if $n>\sa(c)$,
$P$ is a poset on $2n+1$ points, and $\dim(P)\ge n-c$, then $\se(P)\ge n-\sa(c)$?
\end{question}

In stating Question~1 for posets, we use the notation $\sa(c)$ to remind readers that we
are discussing ``stability analysis.''
Unlike the situation with graphs, we know of no elementary argument to show 
that the function $\sa(c)$ is well defined, ignoring any issue of how fast $\sa(c)$
would have to grow in terms of $c$.  However, in~\cite{BHPT16}, 
Bir\'{o}, Hamburger, P\'{o}r and Trotter showed the function $\sa(c)$ is
well defined and satisfies $\sa(c)=O(c^2)$. Also, they gave
a construction using finite projective planes to show that
$\sa(c)=\Omega(c^{4/3})$.  The research for this paper began with the challenge
of finding the correct exponent on $c$ in the function $\sa(c)$, knowing that the
answer is in the interval $[4/3,2]$.  In this paper, we will raise the lower
bound on this interval to $3/2$.

For the poset analogue of Question 2 for graphs, we have:

\begin{question}[Question 2 for Posets]  
For integers $d$ and $n$ with
$d\ge2$ fixed and $n\rightarrow\infty$, what is the maximum
value $f(d,n)$ of $\dim(P)$ among all posets $P$ on $n$ points with $\se(P)<d$?
\end{question}

Question~2 for posets was first posed in~\cite{Trot89},
and then referenced again in~\cite{ErKiTr91}. Here we obtain
a better result, and we remove the requirement that $d$ be large.
For historical reasons, the value of $f(2,n)$ has been studied---albeit
with different notation and terminology---for many years. No doubt
this results from the fact that the class of posets with standard example
number~$1$ is the class of \textit{interval orders}.
Combining results of several authors (see the discussion
in~\cite{BHPT16}), the value of $f(2,n)$ can be determined to
within an additive error of at most~$5$.  However, as a
crude estimate, we have
\[
 f(2,n)=\lg\lg n + (1/2+o(1))\lg\lg\lg n.
\]

For a fixed value of $d\ge3$, Bir\'{o}, Hamburger and
P\'{o}r~\cite{BiHaPo15} proved that $f(d,n)=o(n)$, but this leaves
open the possibility that $f(d,n)$ behaves in the same slow-growing
manner as $f(2,n)$.  However, we will show in Section~\ref{sec:eps} 
that there is a positive constant $\alpha_d$ so that $f(d,n)=\Omega(n^{\alpha_d})$.

\subsection{Links with Random Bipartite Posets}

Working on Question 1 for posets led us to revisit the following model 
for a random bipartite poset, introduced and
studied by Erd\H{o}s, Kierstead and Trotter~\cite{ErKiTr91}.  
Let $n\in N$ and fix disjoint sets $A$ and $A'$, each of size $n$.  Then 
$\Omega(n,p)$ denotes the probability space consisting of posets $P$ such that
(1)~the ground set of $P$ is $A\cup A'$ with $A\subseteq \Min(P)$ and 
$A'\subseteq \Max(P)$; and (2)~for a pair $(a,a')\in A\times A'$,
set $\bbP(a<a'\text{ in } P)=p$, (in general, $p$ is a function of $n$) with 
events corresponding to distinct pairs independent.  

To place this work in historical perspective, we give here
a brief overview of key results, beginning with a 
discussion of upper bounds on dimension.
For a poset $P$ (of arbitrary height), let $\Delta_U(P)$ denote
the maximum size of sets of the form $U_P(x)=\{y\in P: x\le y\text{ in }P\}$
taken over all elements $x\in P$.  Analogously, $\Delta_D(P)$ is
the maximum size of sets of the form $D_P(x)=\{z\in P:z\le x\text{ in }P\}$.
Then set $\Delta(P)=\max\{\Delta_U(P),\Delta_D(P)\}$.  In~\cite{FurKah86},
F\"{u}redi and Kahn proved that if $\Delta(P)=k$, then
$\dim(P)<50k\log^2 k$.

The first inequality in the following theorem is due to F\"{u}redi and 
Kahn~\cite{FurKah86}.  The second is a quite recent result of 
Scott and Wood~\cite{ScoWoo18+} improving the bound from~\cite{FurKah86}
cited immediately above.  Readers may note that the Lov\'{a}sz local 
lemma~\cite{ErdLov75} was used in both~\cite{FurKah86} and~\cite{ScoWoo18+}.

\begin{theorem}\label{thm:old-ub}
If $u=\Delta_U(P)$, $k=\Delta(P)$ and $|P|=n$, then 
\begin{align*}\label{eqn:FurKah}
\dim(P)&<1+2(u+1)\log n\quad\text{and} \\
\dim(P)&<k\log^{1+o(1)} k
\end{align*}
\end{theorem}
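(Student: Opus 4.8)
The plan is to prove the two inequalities separately, since they come from different sources and use different machinery. For the first inequality, $\dim(P)<1+2(u+1)\log n$, I would use a probabilistic argument in the spirit of the original F\"{u}redi--Kahn approach. The key idea is that a realizer of small size can be obtained by a random construction: for each of $d$ ``rounds'' independently, one builds a linear extension by first choosing a random linear order on the minimal-type elements and then inserting each remaining element greedily into a position consistent with $P$. One then shows that for each incomparable pair $(x,y)$, the probability that \emph{no} linear extension in the family reverses it is small—specifically at most $(1-q)^d$ for an appropriate $q$ depending on $u$—and applies a union bound over all at most $\binom{n}{2}$ incomparable pairs. Setting $d=1+2(u+1)\log n$ makes this union bound strictly less than $1$, establishing the existence of a realizer of that size. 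The technical heart is estimating $q$, the probability that a single randomly built linear extension reverses a fixed incomparable pair $(x,y)$; this is where the parameter $u=\Delta_U(P)$ enters, because the obstruction to placing $x$ above $y$ is controlled by the up-sets $U_P(x)$ and $U_P(y)$.

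For the second inequality, $\dim(P)<k\log^{1+o(1)}k$, I would invoke the Scott--Wood theorem~\cite{ScoWoo18+} essentially as a black box, since the excerpt explicitly attributes this bound to that paper; the only work is to state it in the form required here and note that it holds for posets of arbitrary height with $k=\Delta(P)$. If a self-contained argument were wanted, I would follow their strategy: reduce to a fractional relaxation of dimension (fractional chromatic-type parameter for the associated hypergraph of critical pairs), use the fact that this hypergraph has bounded degree in terms of $k$, and then apply a Lov\'{a}sz-local-lemma argument together with an iterated defective-coloring / entropy-compression refinement to shave the second logarithmic factor down to $\log^{o(1)}k$. But for the purposes of this paper, citing~\cite{ScoWoo18+} is the honest and efficient route.

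The main obstacle, and the step I would be most careful about, is the estimate for $q$ in the first inequality. One must verify that when elements are inserted greedily into the random linear order on the minimal elements, a fixed incomparable pair $(x,y)$ gets reversed (say $y$ placed below $x$, or the symmetric statement for the down-side) with probability bounded below by roughly $\tfrac{1}{2(u+1)}$, uniformly over all such pairs and independently of the rest of the construction in the relevant sense. The subtlety is that the insertion positions of different elements are not independent, so one needs to condition appropriately—typically by revealing the random order restricted to $U_P(x)\cup U_P(y)$ together with $x$ and $y$ themselves, and arguing that on this bounded-size configuration the favorable event has probability at least $\tfrac{1}{u+1}$ (one ``slot'' out of at most $u+1$), then taking the two sides (up-sets and down-sets) into account to get the factor $2(u+1)$ in the exponent. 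Once $q\ge\tfrac{1}{2(u+1)}$ is in hand, the inequality $(1-q)^d\le e^{-qd}<\binom{n}{2}^{-1}$ for $d=1+2(u+1)\log n$ is a routine calculation, and the union bound finishes the proof.
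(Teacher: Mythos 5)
The paper offers no proof of this theorem: it is stated as background and attributed to F\"{u}redi--Kahn~\cite{FurKah86} for the first inequality and to Scott--Wood~\cite{ScoWoo18+} for the second. Your decision to treat the second inequality as a black-box citation is therefore exactly what the paper does, and is the right call; nothing more is needed there.

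Your reconstruction of the F\"{u}redi--Kahn argument for the first inequality has the right shape (random linear orders, per-pair reversal probability, union bound over incomparable pairs), but the bookkeeping has a slip that, as written, would not give the stated constant. The clean version is: for a uniformly random linear order $\pi$ on the ground set, build a linear extension $L_\pi$ from the top down, always placing next the $\pi$-largest currently available element; for a fixed ordered incomparable pair $(x,y)$, the event that $y$ is $\pi$-maximal in $U_P(x)\cup\{y\}$ (a set of size at most $u+1$) forces $y$ above $x$ in $L_\pi$, so the reversal probability is at least $q\ge 1/(u+1)$ using up-sets alone. The factor $2$ in $2(u+1)\log n$ then comes from the union bound over roughly $n^{2}$ ordered incomparable pairs, since one needs $e^{-qd}\cdot n^{2}<1$, i.e., $d>2(u+1)\log n$. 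In your write-up you instead estimate $q\ge 1/(2(u+1))$ and credit the factor $2$ to ``combining up-sets and down-sets''; with that weaker $q$, the union bound over $n^{2}$ ordered pairs would require $d\ge 4(u+1)\log n$, overshooting the bound by a factor of two. So either keep $q\ge 1/(u+1)$ from the up-set estimate alone, or be explicit that the $2$ is absorbing the exponent from $n^{2}$ in the union bound; do not count the same factor twice.
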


Continuing with upper bounds,
as is well known, almost all labeled posets on $n$ points have the following
structure:\quad  $P$ is the union $\Min(P)\cup A\cup \Max(P)$ of three disjoint
antichains;  the size of $A$ is $(1/2\pm o(1))n$; 
both $\Min(P)$ and $\Max(P)$ have size $(1/4\pm o(1))n$;
and $x<y$ in $P$ whenever $x\in\Min(P)$ and $y\in\Max(P)$.  It is then straightforward 
to show that there is a constant $c_1>0$ such that almost all labeled posets on 
$n$ points have dimension at most $n/4-c_1n/\log n$.

Turning to lower bounds, it is more challenging to find good lower bounds in
either of these two settings.  For example, no explicit construction is known for
a poset $P$ with $\Delta(P)=k$ and $\dim(P) > k+1$ for \emph{any}
value of $k$.  Also, simple counting only shows that almost all labeled
posets on $n$ elements have dimension $\Omega(n/\log n)$.

The following lower bounds are proved in~\cite{ErKiTr91}.
In stating these bounds, we use the standard abbreviation 
$a.a.s.$ for \textit{asymptotically almost surely}.

\begin{theorem}\label{thm:old-lb}
For every $\epsilon>0$, there exists positive constants $\delta_1,\delta_2,\delta_3$ so that
$a.a.s.$,
\begin{equation*}\label{eqn:old-lb}
\dim(P)>\begin{cases}
  \delta_1 pn\log pn &\text{if }n^{-1+\epsilon}<p \le 1/\log n, \text{and}\\
  \max\Bigl\{\delta_2 n, n -\delta_3n/(p\log n)\Bigr\}&\text{if }1/\log n\le p < 
   1-n^{-1+\epsilon}.
\end{cases}
\end{equation*}
\end{theorem}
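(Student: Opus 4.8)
The plan is to analyze the two regimes separately, since the dominant behavior of $\dim(P)$ changes character as $p$ grows. In both cases the workhorse is the observation that for a height-$2$ poset with $\Min(P)=A$ and $\Max(P)=A'$, a realizer is essentially a family of linear extensions, and the size of a realizer is controlled by how efficiently one can ``reverse'' the incomparable pairs: a single linear extension $L$ reverses a pair $(a,a')$ (putting $a'$ below $a$) only if in $L$ the set of minimal elements above $a'$ and the set of maximal elements below $a$ interleave in a restricted way. I would make this precise via the standard alternating-cycle / critical-pair machinery: $\dim(P)$ is at least the chromatic number of an associated hypergraph on the incomparable pairs, and more usefully, $\dim(P)\ge k$ whenever one can exhibit a configuration that cannot be covered by $k-1$ linear extensions. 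The cleanest route is to bound, for a fixed candidate family $\mathcal F=\{L_1,\dots,L_d\}$, the probability that $\mathcal F$ realizes the random $P$, and then union-bound over all $d^{O(n)}$ (or $n^{O(n)}$) choices of such families.

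For the sparse-ish regime $n^{-1+\epsilon}<p\le 1/\log n$, I would first pass to the induced subposet on pairs that are actually incomparable; with this range of $p$, a.a.s.\ every minimal element $a$ has $\deg(a)=(1\pm o(1))pn$ up-neighbors, by Chernoff, so the down-sets and up-sets have controlled size. Then I would run the union-bound: fix $d=\lfloor\delta_1 pn\log pn\rfloor$ and a family of $d$ linear extensions; a necessary condition for this family to realize $P$ is that every incomparable critical pair be reversed by some $L_j$, and for a single $L_j$ the number of pairs $(a,a')$ it can reverse is at most roughly $|A|\cdot|A'|/(\text{something growing in }pn)$ — the point being that within one linear extension the reversed pairs form a structure (a union of ``nested'' families coming from the order of the $a$'s and $a'$'s in $L_j$) of size $O(n^2/(pn))=O(n/p)$, whereas the total number of incomparable critical pairs is $\Theta(n^2)$. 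Balancing $d\cdot O(n/p)$ against $\Theta(n^2)$ forces $d=\Omega(pn)$; squeezing out the extra $\log pn$ factor requires the sharper count that a linear extension reverses only $O(n/(p\log pn))$-ish pairs per minimal element on average, which comes from a more careful entropy/counting estimate on how the neighborhoods of the $a_i$ can be linearly ordered. This entropy estimate, combined with the first-moment bound $\binom{\text{choices of }\mathcal F}{}\cdot\bbP[\mathcal F\text{ realizes }P]\to 0$, is the technical core.

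For the dense regime $1/\log n\le p<1-n^{-1+\epsilon}$, the $\max$ has two branches. The branch $\dim(P)>\delta_2 n$ follows by finding a.a.s.\ a large standard example $S_m$ with $m=\Omega(n)$ inside $P$: restrict attention to the incomparable pairs (there are a.a.s.\ $(1-p\pm o(1))n^2$ of them, and each $a$ has $(1-p\pm o(1))n$ of them when $p$ is bounded away from $1$), and use a greedy or Rödl-nibble–type argument, or a direct second-moment computation, to locate a permutation matrix of incomparabilities of size $\Omega((1-p)n)\ge\Omega(n/\log n)$ — actually for $p$ bounded away from $1$ one gets $\Omega(n)$, giving $\se(P)=\Omega(n)$ hence $\dim(P)=\Omega(n)$; for $p$ close to $1$ this branch degrades and is superseded by the other. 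The branch $\dim(P)>n-\delta_3 n/(p\log n)$ is the complementary upper-bound-matching statement: here $P$ has very few incomparable pairs (about $(1-p)n^2$), and one shows that even so they are ``spread out'' enough that no small family of linear extensions can reverse all critical pairs. The argument is again first-moment: a family of $d$ linear extensions reverses at most $d\cdot O(n/(p\log n))$ critical pairs (using that when $p$ is large, the up-set of each $a$ is nearly all of $A'$, so a linear extension can reverse a pair $(a,a')$ only when $a'$ is one of the few non-neighbors, and the ``few non-neighbors'' structure tightly constrains the reversals), so to reverse all $\Theta((1-p)n^2)$ of them we need $d\ge n-O(n/(p\log n))$.

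The main obstacle, in both regimes, is the per–linear-extension reversal bound: translating ``$L_j$ reverses the pair $(a,a')$'' into a combinatorial constraint sharp enough to produce the exact $\log$ factors. I expect this to hinge on a clean lemma of the form ``in any linear extension $L$ of a height-$2$ poset with minimum up-degree $\ge t$, the number of reversed critical pairs is $O(n^2/(t\log t))$ (equivalently, the reversed pairs, viewed as a bipartite graph on $A\times A'$, have a forbidden configuration forcing few edges),'' proved by an interleaving/entropy argument on the linear order $L$ restricted to $A$ versus $A'$. Once that lemma is in hand, assembling the theorem is a routine union bound plus Chernoff concentration of degrees, and the only remaining care is choosing the constants $\delta_1,\delta_2,\delta_3$ and verifying that the $n^{O(n)}$ enumeration of candidate realizers is killed by the probability estimate in each of the stated $p$-ranges.
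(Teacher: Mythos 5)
First, note that Theorem~\ref{thm:old-lb} is quoted from \cite{ErKiTr91} and is not proved in this paper, so there is no ``paper's own proof'' to compare against directly; however, the machinery the paper \emph{does} use in Section~4 to prove the sharpened New Lower Bounds (Theorem~\ref{thm:new-lb-comp}) is precisely what underlies the cited result, and your proposal can be assessed against that. Your overall strategy---a first-moment union bound over candidate realizers---is the right idea, but there is a gap that would make the argument collapse: you propose to union-bound over ``$d^{O(n)}$ (or $n^{O(n)}$) choices'' of families $\cgF=\{L_1,\dots,L_d\}$ of full linear extensions, yet the number of such families is $((2n)!)^d$, vastly larger than $n^{O(n)}$. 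The device you are missing is the reduction to \emph{short realizers}: one first shows (by Markov, as in Lemma~\ref{lem:markov-bin}) that a.a.s.\ the balanced independence number $\wttbin(P)$ is below a small threshold $t$, and then observes that any realizer may be truncated so that each linear extension contributes only the $t-1$ highest elements of $A$ and the $t-1$ lowest elements of $A'$. This cuts the enumeration down to $n^{2(t-1)d}\le n^{2tn}$ families, and it is \emph{for a fixed} short family $\Sigma$ that one bounds $\bbP(R(P))$ over the randomness in $P$, not via a deterministic per-linear-extension reversal count for a fixed $P$ as your sketch suggests. Without the short-realizer reduction the enumeration is not controlled and the union bound cannot close.

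A second, independent error is in your argument for the branch $\dim(P)>\delta_2 n$: you propose to exhibit a standard example $S_m$ with $m=\Omega(n)$ by finding a ``permutation matrix of incomparabilities,'' but a matching in the incomparability graph is \emph{not} a standard example. $S_m$ additionally requires every off-diagonal pair $(a_i,a'_j)$, $i\ne j$, to be comparable, an event with probability $p^{m(m-1)}$, which for $p$ bounded away from $1$ vanishes faster than $\binom{n}{m}^2 m!$ can compensate once $m\gg\log n$. Thus a.a.s.\ $\se(P)=O(\log n)$ in this range, and the route through $\se(P)=\Omega(n)$ is unavailable; the $\delta_2 n$ bound must also come from the short-realizer union bound. (Relatedly, your dimensional analysis in the second branch does not check out: $d\cdot O(n/(p\log n))\ge \Theta((1-p)n^2)$ yields $d\gtrsim (1-p)pn\log n$, not $d\ge n-O(n/(p\log n))$.)
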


The first inequality in Theorem~\ref{thm:old-lb} shows that the two upper bounds
in Theorem~\ref{thm:old-ub} are essentially best possible---although 
there remains an $o(\log k)$ multiplicative gap for the second.
The second inequality in Theorem~\ref{thm:old-lb} was used in~\cite{ErKiTr91} to 
show that is a positive constant $c_2>0$ so that almost all labeled posets
on $n$ elements have dimension greater than $n/4 - c_2n/\log n$.  

When $n^{-1}\log^2 n<p\le 1/\log n$, good upper bounds on the expected value of the dimension
of a poset $P\in\Omega(n,p)$ are provided by Theorem~\ref{thm:old-ub}, since
$\Delta(P)$ and $\Delta_U(P)$ are sharply concentrated around $pn$.  For 
the range $1/\log n\le p<1$, the following upper bound is given in~\cite{ErKiTr91}.

\begin{theorem}\label{thm:old-ub-general}
If $\epsilon>0$ and $1/\log n\le p < 1$, then $a.a.s.$,
\[
\dim(P)\le n -\frac{n\log(1/p)}{(2+\epsilon)\log n}.
\]
\end{theorem}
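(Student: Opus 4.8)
The plan is to bound $\dim(P)$ by cutting $P$ along its maximal elements and reducing everything to a first‑moment count. Put $t:=\lceil 2\log n/\log(1/p)\rceil$. We may assume the claimed saving $n\log(1/p)/((2+\epsilon)\log n)$ exceeds a suitable constant (otherwise the bound asserts only $\dim(P)\le n-O(1)$, which holds a.a.s.\ by a first‑moment count of nearly‑spanning copies of $S_n$ in $P$; e.g.\ the expected number of spanning copies of $S_n$ in $P$ is $n!\,q^{n}p^{n^{2}-n}$, which is exponentially small uniformly in $p$, $q:=1-p$), so that $3\le t<n$. Partition $A'=\Max(P)$ into $m=\lceil n/t\rceil$ blocks $S_1,\dots,S_m$ of size $\le t$. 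I would first record the elementary inequality
\[
 \dim(P)\ \le\ \sum_{i=1}^{m}\dim\!\bigl(P[A\cup S_i]\bigr),
\]
valid for an arbitrary partition of $A'$: extend each linear extension in a realizer of $P[A\cup S_i]$ to a linear extension of $P$ by placing $A'\setminus S_i$ on top, and observe that the union of these $m$ families reverses every critical pair of $P$ — since criticality is inherited under restriction, each critical pair of $P$ is a critical pair of $P[A\cup S_i]$ for every block containing both its coordinates (this covers all $A$–$A'$ incomparabilities, and those inside $A$), while a pair of incomparable maximal elements in two different blocks is reversed across the two corresponding families. Hence it suffices to prove that \emph{a.a.s.\ $\dim(P[A\cup S_i])\le t-1$ for every block}, for then $\dim(P)\le m(t-1)+O(1)\le n-n/t+O(1)\le n-n\log(1/p)/((2+\epsilon)\log n)$ once $n$ is large.

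The structural heart is a bipartite analogue of Kimble's theorem: \emph{a bipartite poset $Q$ with exactly $t\ge 3$ maximal elements $c_1,\dots,c_t$ has $\dim(Q)\le t-1$ unless $Q$ contains $S_t$ with $\{c_1,\dots,c_t\}$ as its set of maximal points} — equivalently, unless all $t$ of the ``co‑atom regions'' $\bigcap_{j\ne k}D_Q(c_j)\setminus D_Q(c_k)$ are nonempty. I would prove the implication we need by an explicit colouring of the critical pairs: if, say, $\bigcap_{j\ge2}D_Q(c_j)\subseteq D_Q(c_1)$, then every non‑relation $(a,c_1)$ has $a\not< c_k$ for some $k\ge2$ and so can be adjoined to the column of $c_k$; one checks that each of the $t-1$ resulting sets of critical pairs contains no alternating cycle, yielding $t-1$ linear extensions reversing all $A$–$\{c_1,\dots,c_t\}$ incomparabilities. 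The delicate point is the incomparabilities inside $\Min(Q)$: these are reversed automatically except for pairs $(b,z)$ with $b$ below \emph{all} and $z$ below \emph{none} of $c_1,\dots,c_t$ — a configuration rare when $p$ is bounded away from $1$ but genuinely present for small $p$ — and all of these, over all blocks at once, are reversed by one further linear extension of $P$ in which the few minimal elements lying below an entire block are lifted above the rest of $A$; this accounts for the $O(1)$ above.

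It then remains to bound the probability that some block contains $S_t$. The expected number of copies of $S_t$ whose maximal points are exactly a fixed block $S_i$ is
\[
 \binom{n}{t}\,t!\ q^{t}\,p^{t(t-1)}\ \le\ \bigl(n\,q\,p^{\,t-1}\bigr)^{t},
\]
since such a copy is a choice of $t$ minimal elements together with a perfect matching of non‑relations onto $S_i$, contributing $q$ for each of the $t$ prescribed non‑relations and $p$ for each of the $t(t-1)$ prescribed relations. Using $q/p\le\log n$ (as $p\ge1/\log n$) and the choice of $t$, one gets $p^{\,t-1}\le p^{-1}n^{-2}$, hence $n\,q\,p^{\,t-1}\le(\log n)/n$; summing over the $m\le n$ blocks,
\[
 \bbP[\text{some block contains }S_t]\ \le\ n\Bigl(\tfrac{\log n}{n}\Bigr)^{t}\ =\ \log n\cdot\Bigl(\tfrac{\log n}{n}\Bigr)^{t-1}\ \longrightarrow\ 0 .
\]
So a.a.s.\ no block contains $S_t$, whence a.a.s.\ $\dim(P[A\cup S_i])\le t-1$ for all $i$, and the theorem follows.

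The step I expect to be the main obstacle is the bipartite Kimble‑type lemma, and inside it the clean, range‑uniform handling of the critical pairs within $\Min(Q)$; the rest is the elementary cutting inequality together with a routine first moment (and some constant‑chasing to absorb the $O(1)$ terms, which one can always do by slightly enlarging $\epsilon$). I note that the same argument, run with $t=\lceil(1+\delta)\log n/\log(1/p)\rceil$, in fact gives $\dim(P)\le n-(1-o(1))\,n\log(1/p)/\log n$; the constant $2+\epsilon$ in the statement merely leaves room.
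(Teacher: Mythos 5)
The paper does not actually prove this statement; it is quoted from~\cite{ErKiTr91} (see the sentence preceding Theorem~\ref{thm:old-ub-general}), so there is no ``paper's proof'' to match. Your argument --- partition $A'$ into blocks of size $t=\lceil 2\log n/\log(1/p)\rceil$, use the subadditivity $\dim(P)\le\sum_i\dim(P[A\cup S_i])$, bound each block by a bipartite Kimble-type lemma ($\dim(Q)\le t-1$ unless some co-atom region of $\Max(Q)$ is nonempty for every $c_k$, i.e.\ unless $Q\supseteq S_t$), and kill $S_t$'s by a first moment --- is a genuinely different and attractive route, quite unlike the ``one-sided realizer built from a fixed candidate family'' scheme that this paper uses in Section~5 for the sharper bounds. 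The cutting inequality, the structural lemma (I checked it: if the co-atom region of $c_1$ is empty, the $t-1$ extensions $L_k$, $k\ge2$, of the form $D(c_k)<c_k<D(c_1)\setminus D(c_k)<c_1<\text{rest of }A<\{c_j:j\ne1,k\}$ do reverse all of $I_{P[A\cup S_i]}$), and the first-moment bound $\binom{n}{t}t!\,q^t p^{t(t-1)}\le(\log n/n)^t$ summed over $\le n$ blocks are all sound. Also, the ``delicate point'' about $\Min$--$\Min$ pairs is moot here, since the paper's dimension (Section~\ref{sec:background}) only requires reversing pairs in $A\times A'$, and passing to Dushnik--Miller costs at most $+1$.

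However, there are two bookkeeping gaps that matter precisely in the range this paper cares about ($p$ near~$1$). First, the chain $\dim(P)\le m(t-1)+O(1)\le n-n/t+O(1)$ is false: with $m=\lceil n/t\rceil$ you only have $m(t-1)\le n-\lfloor n/t\rfloor+(t-2)$, and the slack $\Theta(t)$ is \emph{not} $O(1)$; when $\log(1/p)\ll \log n/\sqrt n$ (so $t\gg\sqrt n$) this extra $t$ swamps $n/t$ and the displayed inequality fails. The fix is easy but necessary: sum $\dim(P[A\cup S_i])\le|S_i|-1$ over the full blocks and $\le|S_m|$ over the last one, giving $\dim(P)\le n-m+1\le n-\lceil n/t\rceil+1$; the error term is then genuinely $O(1)$. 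Second, your reduction to ``saving $\ge$ a suitable constant'' is not closed: the first moment on spanning copies of $S_n$ only yields $\dim(P)\le n-1$, whereas for saving $\in(1,C_\epsilon]$ the theorem already asks for $\dim(P)\le n-2,\dots,n-C_\epsilon$. With the corrected count, the constant $C_\epsilon$ needed before the main estimate $\lceil n/t\rceil-O(1)\ge n\log(1/p)/((2+\epsilon)\log n)$ kicks in is of order $1/\epsilon$, so this boundary strip is nonempty and must be handled (e.g.\ by iterating the $S_n$ argument, or by running the block argument with a slightly enlarged $t$ and observing that the first moment still vanishes). Neither issue is fatal to the approach, but as written the inequality $m(t-1)+O(1)\le n-n/t+O(1)$ is a genuine error, not a harmless shorthand.
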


The improvements we make here will all be for
the range $p\ge1/2$.  Accordingly, we extract the following
upper and lower bounds from Theorems~\ref{thm:old-ub-general} and~\ref{thm:old-lb}
for this range. Note that the modest improvement in the upper bound results from 
the narrowing of the range on $p$.  Note also that $p$ is bounded away from $1$ for 
the lower bound.  

\begin{corollary}[Old Upper and Lower Bounds]\label{cor:old-bounds}
Suppose $1/2\le p<1$, then $a.a.s.$, 
\begin{equation*}
\dim(P) < n-\frac{n\log(1/p)}{2\log n}.
\end{equation*}
Furthermore, for every $\epsilon>0$, there exists $\delta>0$ so that
$p<1-n^{-1+\epsilon}$, then $a.a.s.$,
\begin{equation*}
\dim(P)>n -\frac{\delta n}{\log n}.
\end{equation*}
\end{corollary}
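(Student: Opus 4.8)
The plan is to read both inequalities off the earlier results, with the lower bound dropping straight out of Theorem~\ref{thm:old-lb} and the upper bound needing one modest refinement of Theorem~\ref{thm:old-ub-general}.

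For the lower bound, assume $1/2\le p<1$ together with the stated hypothesis $p<1-n^{-1+\epsilon}$. For all large $n$ we have $1/\log n\le 1/2\le p$, so $P\in\Omega(n,p)$ lies in the range covered by the second case of Theorem~\ref{thm:old-lb}. Fixing the constant $\delta_3=\delta_3(\epsilon)$ supplied there, we get a.a.s. that $\dim(P)>n-\delta_3 n/(p\log n)$, and since $p\ge 1/2$ gives $1/p\le 2$, this yields $\dim(P)>n-2\delta_3 n/\log n$. So the claim holds with $\delta=2\delta_3$, which, like $\delta_3$, may depend on $\epsilon$.

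For the upper bound, fix any $\epsilon>0$. Since $p\ge 1/2\ge 1/\log n$ for large $n$, Theorem~\ref{thm:old-ub-general} already gives a.a.s. that $\dim(P)\le n-\frac{n\log(1/p)}{(2+\epsilon)\log n}$; this is the asserted bound, except with the constant $2+\epsilon$ in place of $2$, so if one is content with $2+\epsilon$ there is nothing left to prove. To obtain the clean constant $2$, I would revisit the argument behind Theorem~\ref{thm:old-ub-general} and specialize it to the range $p\ge 1/2$, where the incomparability graph of $P$ has edge density $1-p\le 1/2$. The right framework is the standard reformulation of dimension for height~$2$ bipartite posets via reversible sets of incomparable pairs: writing $D_P(a')$ for the down-set of a maximal element $a'$, devoting one linear extension to the incomparable pairs at each maximal element gives the trivial bound $\dim(P)\le n$, and one improves on this by finding, a.a.s., a family of roughly $n\log(1/p)/\log n$ pairwise disjoint pairs of maximal elements, each of which can be handled by a single linear extension after one deletes a controlled set of obstructing incomparable pairs, and by then folding those deleted pairs back into the remaining extensions without enlarging the family. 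The constant $2+\epsilon$ in Theorem~\ref{thm:old-ub-general} is exactly the slack one leaves for this folding-back, and a routine first-moment-and-concentration estimate should show that once $p\ge 1/2$ the counts involved concentrate tightly enough that this slack is $o(1)$ and is absorbed into the lower-order error, giving the stated bound.

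The lower bound and the moment estimates are routine bookkeeping; the one genuine obstacle is the folding-back step. There one must check that enlarging each merged reversible set by obstructing pairs coming from other merges keeps it free of alternating cycles, and that the total number of obstructing pairs created is smaller, by a factor $o(1/\log n)$, than the number of incomparable pairs of $P$, so that they can be reabsorbed within the same budget of $n-k$ linear extensions. This is precisely where the hypothesis $p\ge 1/2$ is used, and getting it right is the heart of the matter.
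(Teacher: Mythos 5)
The lower-bound half of your proposal is correct and complete: the second case of Theorem~\ref{thm:old-lb} gives a.a.s.\ $\dim(P) > n - \delta_3 n/(p\log n)$, and $p \ge 1/2$ turns this into $n - 2\delta_3 n/\log n$, so $\delta = 2\delta_3$ works. This is clearly the intended derivation.

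The upper-bound half has a genuine gap. You correctly point out that Theorem~\ref{thm:old-ub-general} only yields $n - n\log(1/p)/\bigl((2+\epsilon)\log n\bigr)$, whereas the corollary asserts the clean constant $2$; the paper's remark that ``the modest improvement in the upper bound results from the narrowing of the range on $p$'' signals that one must reenter the proof from~\cite{ErKiTr91} and verify that for $p\ge 1/2$ the $\epsilon$-slack can be dropped. Your proposed route---one linear extension per maximal element, merging disjoint pairs of maximal elements, then ``folding back'' obstructing incomparable pairs while avoiding alternating cycles---does not appear anywhere in this paper, which reasons throughout via matchings, defects, balanced clique/independence numbers, and (one-sided) short realizers, never via reversible sets or alternating-cycle bookkeeping. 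More importantly, the sketch is not worked out to the point where it could be checked, and you concede as much when you write that ``getting it right is the heart of the matter.'' As it stands, the proposal establishes the corollary with $2+\epsilon$ in place of $2$, but does not deliver the stated constant; the paper itself gives only the parenthetical hint, so this gap is one the paper leaves to the reader, but your filling of it is speculative rather than a proof.
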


In discussing the merits of our new results, the old upper and lower bounds given in 
Corollary~\ref{cor:old-bounds} will be the basis of comparison.  

To make the connection with Question~1 for posets concrete, let
$p=1-n^{-1/2}$.  If $P\in\Omega(n,p)$, then $a.a.s.$,
$\se(P)=O(\sqrt{n}\log n)$. Furthermore, the old upper bound in
Corollary~\ref{cor:old-bounds} 
implies that $a.a.s.$, $\dim(P)\le n- \sqrt{n}/(2\log n)$.  It is easy to
see that if this 
upper bound is tight, up to a poly-log multiplicative factor on 
the difference $n-\dim(P)$, then the exponent~$2$ on $c$ in the 
function $\sa(c)$ is correct.

However, the old lower bound in Corollary~\ref{cor:old-bounds} only 
asserts that there is a constant $\delta$ such that 
$a.a.s.$, $\dim(P) \ge n -\delta n/\log n$.  This inequality is enough to
prove that $\sa(c)=\Omega(c\log c)$, but we already had a constructive 
proof of an even better lower bound.  This shortcoming was the launching point 
for revisiting the subject of dimension for random bipartite posets, but
now with the specific goal of obtaining better bounds when $p\ge1/2$.

The bridges to Question~2 for posets were not clear at the outset of our
research but came into view as better bounds and connections to Question~1 unfolded.
These details will become clear later in the paper.

\subsection{Statement of Improved Bounds for Random Bipartite Posets}

To avoid sporadic effects when $p$ is very close to $1$, we assume $p\le 1-n^{-1}\log^2 n$.  
Consistent with modern research in combinatorics, we will typically
treat a quantity like $(1-p)n/\log n$ as if it is an integer when
it tends to infinity with $n$.  The minor errors this approach introduces can 
be easily repaired.

For the range $1/2\le p\le 1-n^{-1}\log^2 n$, many of the results 
and arguments are more naturally phrased in terms of the complementary parameter $q=1-p$.
Accordingly, for the balance of the paper, the symbol $p$ will be used exclusively
as a quantity (usually a function of $n$) from the interval $[0,1]$, while $q$ will
\emph{always} be $1-p$.  Some of the proofs of our new results are extensions and 
refinements of arguments appearing in~\cite{ErKiTr91} and~\cite{BHPT16}, but
most of our results require entirely new approaches.  In particular, we will
apply some second moment methods, Talagrand's inequality and Janson's inequality.
None of these tools were used in~\cite{ErKiTr91} or in~\cite{BHPT16}.  

Our improved upper bounds, stated below in comprehensive form, 
involve the well-studied Euler product function $\phi(q)=\prod_{i=1}^{\infty}(1-q^i)$.  

\begin{theorem}\label{thm:new-ub-comp}
Suppose $n^{-1}\log^2 n\le q\le 1/2$, $0<\epsilon<1$ and 
$z=n^2q\log(1/\phi(q))$.  Then $a.a.s.$,
\begin{equation*}\label{eqn:new-ub-comp}
\dim(P) < 
  \begin{cases}
  n -(2-\epsilon)\log(qn)/q & \text{if } n^{-1}\log^2 n\le q\le n^{-1/2}\log n.\\
  n -qn/(2\log(qn)) &\text{if } n^{-1/2}\log n <q\le n^{-1/3}.\\
  n -n\log(1/\phi(q))/\log z&\text{if } n^{-1/3}<q\le 1/2.
  \end{cases}
\end{equation*}
\end{theorem}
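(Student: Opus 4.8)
The three bounds will all be consequences of a single scheme for certifying ``$\dim(P)\le n-k$'', together with a probabilistic estimate showing the certificate is present in a typical $P$. The starting point is the realizer of size $n$ witnessing the trivial bound $\dim(P)\le|A'|=n$: for each $b'\in A'$ take the linear extension $L_{b'}$ listing $D_P(b')$, then $b'$, then $\Inc_P(b'):=\{a\in A:a\parallel b'\}$, then the remaining maximal elements, so $L_{b'}$ reverses exactly the critical pairs $(b',a)$ with $a\in\Inc_P(b')$. To do better I would designate $k$ maximal elements as \emph{special}, discard their private extensions, and instead splice each special $c'$ into the $\Inc_P(b')$-block of several of the remaining $n-k$ extensions, placing it just after the elements of $\Inc_P(b')$ that lie below $c'$; one checks this keeps $L_{b'}$ a linear extension and makes it additionally reverse the pairs $(c',a)$ with $a\in\Inc_P(b')\cap\Inc_P(c')$. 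Several specials may be stacked inside one $L_{b'}$, and the resulting order is consistent precisely when no short alternating cycle among them is supported on $\Inc_P(b')$; concretely, if $c'$ sits at depth $d$ in such a stack, it can cover $(c',a)$ only when $a$ is incomparable to all $d$ specials at or above it. Hence $\dim(P)\le n-k$ as soon as the $k$ specials can be routed into stacks over the $n-k$ hosts so that (i) every incomparability of every special is covered, and (ii) all stacks are acyclic in this sense.

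The substantive case is $n^{-1/3}<q\le1/2$, where (ii) is genuinely binding and the Euler function enters. An appearance of a special at depth $d$ covers an element of its incomparability set only with probability $\asymp q^{d}$, so it contributes ``coverage weight'' $\log\frac{1}{1-q^{d}}$, and a special occupying one appearance at each depth $1,2,3,\dots$ accumulates total weight $\sum_{d\ge1}\log\frac{1}{1-q^{d}}=\log\frac{1}{\phi(q)}$ — exactly as successive rows of a partial Latin rectangle, each forced to avoid the positions already used, leave a surviving proportion that telescopes into $\prod_{d}(1-q^{d})$. Since each depth offers at most $n-k$ appearances, an entropy/covering computation of this type shows that a typical $P$ admits a valid routing with $k\asymp n\log\frac{1}{\phi(q)}/\log z$, where $z=n^{2}q\log(1/\phi(q))$ is precisely the quantity that absorbs the coupon-collector correction needed to cover \emph{all} incomparabilities (and $\log z=(1+o(1))\,2\log(qn)$ throughout this range). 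To pass from ``a prescribed region supports an admissible stack configuration'' to ``a.a.s.\ there are $\asymp n/\log z$ disjoint ones'', I would use Janson's inequality to bound the probability that a region supports none, and Talagrand's inequality for the concentration of the degree statistics the count relies on — tools absent from~\cite{ErKiTr91,BHPT16}.

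For $q\le n^{-1/3}$ the incomparability sets are so sparse that $\phi(q)=1-q+O(q^{2})$ and condition (ii) essentially disappears. In the window $n^{-1/2}\log n<q\le n^{-1/3}$ the same scheme, with $\log(1/\phi(q))$ replaced by its leading term $\asymp q$, reduces to a balance of the form ``$k$ specials, $n-k$ hosts, $\asymp\log(qn)/q$ appearances per special,'' yielding $k\asymp qn/(2\log(qn))$; the improvement over Corollary~\ref{cor:old-bounds} is the replacement of $\log n$ by $\log(qn)$, and a routine first/second moment estimate suffices. For $n^{-1}\log^{2}n\le q\le n^{-1/2}\log n$ the stack mechanism degenerates (the relevant common sub-incomparability sets have expected size below $1$), and instead one argues more directly that $P$ is close enough to the complete bipartite order that a first-moment count produces $(2-\epsilon)\log(qn)/q$ specials together with a valid routing, the factor $2-\epsilon$ measuring how much overlap between the host sets of distinct specials is affordable; Talagrand's inequality again supplies the needed concentration.

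The hard part is the main range: one must (a) establish the \emph{asymptotic} identity governing the surviving proportion in the Latin-rectangle bookkeeping — it must equal $\prod_{d}(1-q^{d})$ up to lower-order factors, not merely be bounded below by it, since this is what fixes the constant in front of the saving and makes the new upper bound meet the old lower bound of Corollary~\ref{cor:old-bounds}; and (b) control the dependencies between overlapping candidate stack configurations, where an ordinary second moment is too weak and Janson's inequality does the work. The remaining ranges are optimizations of the same template and should follow from standard moment and concentration estimates.
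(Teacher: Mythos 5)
Your intuition about the Euler product is essentially the same as the paper's: promote a set of ``specials'' into the other extensions at depths $1,2,\dots$, note that coverage at depth $d$ occurs with probability roughly $q^d$, so that the failure probability telescopes into $\prod_{d}(1-q^{d})$, and balance the total number of appearances against the cost in lost linear extensions. This is a dual presentation of what the paper does with one-sided realizers built from subsets of $A$ (rather than splicing into extensions indexed by $A'$), and the resulting formula $k\asymp n\log(1/\phi(q))/\log z$ falls out of the same accounting. However, there are three substantive gaps.

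First, you never commit to the combinatorial structure that makes the first-moment computation legitimate. The paper's key device is the \emph{generalized latin rectangle}: an $(m,r,s)$-GLR is required to satisfy, in addition to the obvious balance conditions, the restriction that each ordered pair $(i,j)$ of symbols appears with $i$ below $j$ in \emph{at most one} column. That restriction is precisely what guarantees that for a fixed unrealized pair $(x,x')$, the failure events across the $rs$ appearances of $x$ involve \emph{pairwise disjoint} sets of random comparabilities, which is what makes the expected number of unrealized pairs equal (not merely bounded above by) $nmq\bigl[\prod_{i=1}^{s}(1-q^{i})\bigr]^{r}$. Your phrases ``Latin-rectangle bookkeeping'' and ``entropy/covering computation'' gesture at this, but without an explicit condition limiting overlap among stacks the product formula does not follow; you also need the explicit construction (Lemma~\ref{lem:GLR-2}, via Hall's theorem) to know such an object exists when $m>2rs^{3}$.

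Second, you invoke Janson's inequality for the upper bounds. In the paper Janson is used only for the \emph{lower} bounds (where one must show that \emph{every} short realizer fails, and Janson controls the dependent union bound). The upper bounds in Theorem~\ref{thm:new-ub-comp}, ranges (2) and (3), require only a first-moment calculation on the expected number of unrealized pairs for a single, deterministically specified GLR-based one-sided realizer; no concentration of the count is required, and no disjoint-regions argument is needed. Attributing Janson to the upper bound suggests the wrong mechanism.

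Third, and most seriously, your argument for range (1), $n^{-1}\log^{2}n\le q\le n^{-1/2}\log n$, does not work and is entirely different from the paper's. Your depth-$1$ stacking can cover at most $n-k$ appearance slots; each special needs on the order of $\log(qn)/q$ appearances to have its $\asymp qn$ incomparabilities covered, forcing $k=O(qn/\log(qn))$ — but the claimed bound is $k=(2-\epsilon)\log(qn)/q$, which for $q$ near $n^{-1}\log^{2}n$ is roughly $n/\log^{2}n$, enormously larger. The paper instead proves Lemma~\ref{lem:Luczak} (via Talagrand plus a second-moment computation) to show $\wttbcn(P)\ge(2-\epsilon)\log(qn)/q$ a.a.s., removes a balanced clique pair $(V,V')$ of that size — which is safe precisely because a clique pair generates no critical pairs among its own points — then applies the defect lemma (Lemma~\ref{lem:matchings-defect}) to nearly match $A\setminus V$ with $A'\setminus V'$, and finally uses Lemma~\ref{lem:matchings-1}. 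Talagrand enters through the anti-concentration of the balanced clique number, not through ``concentration of degree statistics for routing.'' You should replace the range-(1) sketch with this clique-pair argument.
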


In the discussion to follow, we will refer to these three inequalities
as New Upper Bounds~(1), (2), and~(3).   New Upper Bounds~(2) and~(3) are 
minor improvements relative to the old upper bound. For example, 
when $q=1/2$, the old upper bound asserts that $a.a.s.$, 
$\dim(P)< n-0.346n/\log n$.
On the other hand, New Upper Bound~(3) improves this to $a.a.s.$, 
$\dim(P)< n - 0.621 n/\log n$.  When $q=n^{-1/2}\log n$, the old upper bound 
asserts that $a.a.s.$, $\dim(P)<n - \sqrt{n}/2$, while New Upper Bound~(2)  
improves this to $a.a.s.$, $\dim(P)<n - \sqrt{n}$.  
 
New Upper Bound~(1) is substantially better 
than the old bound.  For example, when $q=n^{-\alpha}$
and $0<\alpha<1/2$, the old upper bound asserts that $a.a.s.$,
$\dim(P)<n-n^{1-\alpha}/2\log n$, while New Upper Bound~(1) improves this
to $a.a.s.$, $\dim(P)<n-(1-\alpha)n^{\alpha}\log n$.

Here is a statement, again in comprehensive form, of our improved lower bounds.  

\begin{theorem}\label{thm:new-lb-comp}
Suppose $n^{-1}\log^2n\le q\le 1/2$ and $0<\epsilon<1$.
If $q\ge n^{-1/4}\log^3 n$, set $z=\log n+4\log q - 8\log\log n$.
Then $a.a.s.$,
\begin{equation*}
\dim(P) >
  \begin{cases} 
    n-(2+\epsilon)\log(qn)/q &\text{if } n^{-1}\log^2 n \le q \le n^{-4/5}.\\
    n-32\big(n\log n/q\bigr)^{1/2} &\text{if }n^{-4/5}\le q\le (32)^{1/3}n^{-1/3}\log^{1/3} n.\\
    n-8qn &\text{if } (32)^{1/3}n^{-1/3}\log^{1/3} n\le q\le \frac{1}{8} n^{-1/4}\log^3 n.\\
    n-24qn/z &\text{if }\frac{1}{8} n^{-1/4}\log^3 n\le q\le 1/2.
   \end{cases}
\end{equation*}
\end{theorem}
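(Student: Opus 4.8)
The plan is to first recast the statement $\dim(P)\le d$ as a purely combinatorial covering condition for linear orders of $A'$, and then to show that when $d=n-k$, for the value of $k$ named in each case, this condition fails a.a.s.

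\textbf{Step 1: reduction to a covering problem.} For a bipartite poset $P$ with $\Min(P)=A$ and $\Max(P)=A'$, a linear extension $L$ of $P$ may be described by the linear order $\pi=L|_{A'}$ together with, for each $a\in A$, the place where $a$ is inserted among the elements of $A'$; inserting $a$ just before the first element of $A'$ lying above $a$ makes $L$ reverse exactly those critical pairs $(a,a')$ (that is, pairs with $a\not<a'$ in $P$) for which every element of $A'$ weakly preceding $a'$ in $\pi$ is incomparable to $a$ in $P$. Hence $\dim(P)\le d$ if and only if there are linear orders $\pi_1,\dots,\pi_d$ of $A'$ so that every critical pair is reversed by some $\pi_i$ in this sense. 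For $a\in A$ write $B_a=\{b\in A' : a<b\text{ in }P\}$, a subset of $A'$ in which each element appears independently with probability $p$, these subsets being independent over $a$; and for a linear order $\pi$ of $A'$ and $a'\in A'$ write $S_\pi(a')=\{b\in A' : b<a'\text{ in }\pi\}$. The covering condition then reads: for every $a\in A$ and every $a'\in A'$ with $a'\notin B_a$, there is an $i$ with $B_a\cap S_{\pi_i}(a')=\emptyset$.

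\textbf{Step 2: the pigeonhole obstruction.} Suppose $\dim(P)\le n-k$ and fix a realizing family $\pi_1,\dots,\pi_{n-k}$. The set $ND$ of elements of $A'$ occurring first in some $\pi_i$ satisfies $|ND|\le n-k$, so the ``deep'' set $D=A'\setminus ND$ has $|D|\ge k$, and for $a'\in D$ every $S_{\pi_i}(a')$ is nonempty. Call $a$ a \emph{failure witness at} $a'$ if $a'\notin B_a$ and $B_a\cap S_{\pi_i}(a')\neq\emptyset$ for every $i$; by Step~1 the family fails as soon as some $a$ is a failure witness at some deep $a'$. For a \emph{fixed} family the probability it realizes $P$ is therefore $\mu^n$, where $\mu$ is the probability that a given $a$ is a failure witness at no deep $a'$; and for fixed deep $a'$ the probability that $a$ is a failure witness at $a'$ equals $q\cdot\bbP\!\left(B_a\text{ meets every }S_{\pi_i}(a')\right)$, which is at least $q\,p^{\,\nu(a')}$, where $\nu(a')$ is the minimum size of a transversal of $\{S_{\pi_i}(a')\}_i$. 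Summing over $a'\in D$ and controlling the mild dependence among the events ``$a$ is a failure witness at $a'$'' by Janson's inequality bounds $\mu$ above by $\exp\!\left(-\sum_{a'\in D}q\,p^{\,\nu(a')}+\Delta\right)$, with an error term $\Delta$ coming from the dependencies.

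\textbf{Step 3: defeating the adversary, and the main obstacle.} It remains to show that no choice of $n-k$ orders can keep $\nu(a')$ large for all $k$ deep elements at once, and to turn the per-family bound of Step~2 into a statement holding a.a.s.\ over $P$ \emph{uniformly} in the (adversarially, i.e.\ $P$-dependently, chosen) family. The structural constraint is a position budget: one has $|S_{\pi_i}(a')|\le\ell$ only if $a'$ is among the first $\ell+1$ elements of $\pi_i$, and $\pi_i$ has only $\ell+1$ such slots, so at most $(n-k)(\ell+1)$ pairs $(i,a')$ satisfy $|S_{\pi_i}(a')|\le\ell$; balancing $\ell$ against $k$ and comparing $\sum_{a'\in D}p^{\,\nu(a')}$ against $(\log n)/q$ is exactly what forces $k=O(qn/\log n)$ in the dense range, the quantity $z=\log n+4\log q-8\log\log n$ emerging from the optimal choice of these parameters. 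The hardest point is that the crude union bound over all $(n!)^{\,n-k}$ families is far too weak; I would instead either invoke Talagrand's inequality to show that the extremal quantity governing $\dim(P)$ is sharply concentrated — so that the typical value bounded by the per-family estimate is also its a.a.s.\ value — or reduce the union bound to the $\binom{n}{k}$ choices of the deep set $D$ together with a bounded amount of auxiliary data about the early positions of the orders. The four ranges of $q$ reflect different balances in this optimization: case~(4), where $\dim(P)$ is closest to $n$, is the principal case; case~(1) is easier because there are only $\Theta(qn^2)$ critical pairs, few enough for a more direct first-moment argument; and cases~(2)--(3) interpolate, matching the corresponding parts of Theorem~\ref{thm:new-ub-comp} up to constant factors.
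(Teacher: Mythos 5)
Your Steps~1--2 capture the right shape — fix a candidate family, bound the probability it realizes $P$, then union-bound over families — and you correctly identify Janson's inequality as a tool, which the paper indeed uses for cases~(2) and~(3). But the step you flag as ``the hardest point'' in Step~3 is a genuine gap, and neither rescue you propose is the one that works. The crucial enabling fact in the paper is that a.a.s.\ the balanced independence number $\wttbin(P)$ is less than $t=\lceil(2\log n+\log\log n)/\log(1/q)\rceil$ (a trivial first-moment bound, Lemma~\ref{lem:markov-bin}). This allows any realizer to be replaced by a \emph{short realizer}: a family $\Sigma=\{(\sigma_j,\sigma'_j)\}$ in which each $\sigma_j,\sigma'_j$ records only the top $t-1$ elements of $A$ and bottom $t-1$ elements of $A'$ of the corresponding linear extension. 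Since a balanced independent pair of size $t$ would be produced otherwise, the truncated data suffices a.a.s.\ to certify every critical pair is reversed. The number of short families is $\le n^{2(t-1)d}\le n^{2tn}$, which \emph{replaces} the union bound over $(n!)^{n-k}$ families. Your Talagrand suggestion cannot do this job: concentration of $\dim(P)$ around its mean tells you nothing about where the mean is, which is exactly what the per-family estimate plus union bound is supposed to pin down. Your second suggestion (``$\binom{n}{k}$ choices of $D$ plus bounded auxiliary data'') gestures at the truncation but leaves the auxiliary data unidentified, and the identification \emph{is} the proof.

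There are also two discrepancies in the other cases. Case~(1) in the paper is not handled by the short-realizer machinery or a first-moment count of critical pairs at all: it uses the alteration method — a.a.s.\ there are fewer than $\epsilon_1\log(qn)/q$ balanced independent pairs of size~$2$ (a second-moment fact, Lemma~\ref{lem:t=2}), one deletes a point from each such pair, and then Lemma~\ref{lem:2-mixed} forces $\dim$ of the remaining poset to equal the minimum maximal matching, which is bounded below by $n$ minus the balanced clique number via Lemma~\ref{lem:markov-bcn}. Case~(4) is not done via Janson either; the paper reverts to the original Erd\H{o}s--Kierstead--Trotter construction of an explicit independent subfamily $\cgJ\subseteq M\times M'$ of failure events, and the dependence-free product bound is what makes the $z=\log n+4\log q-8\log\log n$ threshold emerge. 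Finally, even in the Janson cases the paper's per-family bound uses a weight function $w(x)=\sum_i\mu_i(x)2^{1-i}$ with $\sum_x w(x)<2d$, allowing a pigeonhole to a subset $M_0$ of low-weight elements, rather than your minimum-transversal quantity $\nu(a')$; the weight bound plus the Weight-Shift inequality (Proposition~\ref{pro:weight-shift}) gives a cleaner handle on $\bbP(\wedge_j\overline{R}_j(x,x'))\ge(1-q)^{w(x)}$ than the transversal estimate and is what makes the pigeonhole close.
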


In the discussion to follow, we will refer to these inequalities
as New Lower Bounds~(1) through (4).  In the range
$n^{-1}\log^2 n\le q\le 1/2$, our bounds show that $a.a.s.$, $\dim(P)\ge(1-o(1))n$.
So the accuracy of our upper and lower bounds should be judged on
the quantity $n-\dim(P)$.  For this quantity,  our upper and lower bounds differ by 
a multiplicative factor that is $1+o(1)$ when $n^{-1}\log^2 n \le q\le n^{-4/5}$, 
and by $O(\log n)$ when $n^{-1/3}\le q\le 1/2$.  In~\cite{ErKiTr91}, it was
asked whether the expected value of $\dim(P)$ behaves monotonically as a function
of $p$.  Our results answer this question negatively.  In particular, when
$q=n^{-1/4}$, we have $a.a.s.$, $\dim(P)\ge n-8n^{3/4}$ and when $q=n^{-4/5}$,
we have $a.a.s.$, $\dim(P)\le n- n^{4/5}$.

In the range $n^{-4/5}\le q\le n^{-1/3}$, 
the ratio of our two bounds on $n-\dim(P)$ deteriorates.  In particular, for 
the special value of $q=n^{-1/2}$, we are only able to show that (roughly speaking) 
$n^{1/2}\le n-\dim(P) \le n^{3/4}$.  

The remainder of the paper is organized as follows.  In the next section,
we provide essential background material.  
In the following three sections, we give proofs of our 
new bounds, grouping arguments according to the underlying scheme.  The setup for
the application of the inequalities of Talagrand and Janson will be given
just before the results are needed.  We return to the motivating 
extremal problems in Section~\ref{sec:eps}, and we close with
some brief comments on remaining problems in Section~\ref{sec:close}.

\section{Essential Background Material}\label{sec:background}

For a poset $P$, we use the compact notation $a<_P a'$ when 
$a<a'$ in $P$. Similarly, we write $a\parallel_P a'$ when $a$ is incomparable 
to $a'$ in $P$.  However, to avoid double subscripts, when $L_j$ is a 
linear extension of $P$, we will use the long form $a<a'$ in $L_j$.

We will be concerned (almost exclusively) with the class $\bbB$
of \textit{bipartite} posets whose ground set is the union of two disjoint 
antichains $A$ and $A'$ with $A\subseteq \Min(P)$ and $A'\subseteq\Max(P)$.  
Bipartite posets have been studied extensively in the literature, and we 
will follow here the conventions that have emerged in this research.  
We will write $\bbB(n)$ for the class of bipartite posets with $|A|=|A'|=n$.

For a poset $P\in\bbB$, we let $I_P$ consist of all pairs
$(a,a')\in A\times A'$ with $a\parallel_P a'$.  Let $d\in N$, 
and let $\cgF=\{L_1,\dots,L_d\}$ be a family of linear extensions of $P$.
We abuse notation slightly and call $\cgF$ a \textit{realizer of $P$} if
for every $(a,a')\in I_P$, there is some $j\in[d]$ with $a>a'$ in $L_j$.
We then define the \textit{dimension} of $P$, denoted
$\dim(P)$, as the least positive integer $d$ such that $P$ has a realizer
of size~$d$.  It is easy to see that this altered notion of dimension never 
exceeds the original Dushnik-Miller definition, and the difference is at most~$1$.
In our work, an additive error of this magnitude can be safely ignored.

\subsection{Matchings, Independence Number, and Clique Number}

The following proposition, which holds for posets in general, is nearly 
self-evident.  It is stated for emphasis.

\begin{proposition}\label{pro:Hira}
Suppose that $(x,y)$ is an ordered pair of distinct
points in a poset $P$ with $x\parallel_P y$.  Then
there exists a linear extension $L=L(x,y)$ of $P$ such that:
\begin{enumerate}
\item If $w\in P$ and $w\parallel_P x$, then $x>_L w$.
\item If $z\in P$ and $z\parallel_P y$, then $z>_L y$.
\end{enumerate}
\end{proposition}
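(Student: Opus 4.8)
The plan is to produce the required linear extension by first extending the partial order $P$ itself, adding to it exactly the relations that conditions~(1) and~(2) demand and then closing up transitively. Concretely, I would let $R$ be the relation on the ground set of $P$ consisting of every pair in $\le_P$, together with every pair $(w,x)$ with $w\parallel_P x$ and every pair $(y,z)$ with $z\parallel_P y$, and let $\overline{R}$ denote its transitive closure. The entire content of the proposition is then the claim that $\overline{R}$ is the order of a partial order: once this is known, any linear extension $L$ of $\overline{R}$ does the job, since $\le_P\subseteq\overline{R}$ forces $L$ to be a linear extension of $P$, while the two extra families of pairs are precisely conditions~(1) and~(2).

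To verify the claim I would rule out a shortest directed cycle $v_0\to v_1\to\cdots\to v_k=v_0$ (with $k\ge 2$, the $v_i$ distinct for $i<k$, each step a pair of $R$ joining distinct points). Because $\le_P$ is already a partial order, such a cycle uses at least one ``new'' pair; since every new pair of the first kind points into $x$ and every new pair of the second kind points out of $y$, a shortest cycle uses at most one new pair of each kind. That leaves three cases. If only a pair $(w,x)$ with $w\parallel_P x$ occurs, then following $\le_P$-steps around the remainder of the cycle yields $x\le_P w$, contradicting $w\parallel_P x$; the case of a lone pair $(y,z)$ with $z\parallel_P y$ is symmetric and forces $z\le_P y$. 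If both kinds occur, then, starting the traversal at $x$, the steps taken before reaching $y$ are all $\le_P$-steps---the unique pair into $x$ is the last step and the unique pair out of $y$ begins at $y$---and this stretch is nonempty because $x\ne y$; hence $x<_P y$, contradicting $x\parallel_P y$.

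The proof then concludes by taking any linear extension of the finite poset $\overline{R}$. I expect essentially all the work to lie in the cycle analysis, and within it the mixed case: one must check carefully that the $\le_P$-portion of the cycle running from $x$ to $y$ is genuinely nonempty, so that one truly extracts the comparability $x<_P y$ needed to contradict $x\parallel_P y$ (the pair $(y,x)$ is of both new kinds at once, but the same reasoning still applies). A slightly more transparent variant is to proceed in two stages: first adjoin only the pairs $(w,x)$ with $w\parallel_P x$, and check via the one-sided version of this argument that the result $P_1$ is a poset; then adjoin the pairs $(y,z)$ with $z\parallel_P y$ to $P_1$, and check that no fresh cycle appears, the decisive point again being that a fresh cycle would force $x\le_P y$.
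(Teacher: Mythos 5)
Your argument is correct. The paper offers no proof of this proposition, explicitly calling it ``nearly self-evident,'' so the relevant comparison is with the standard folklore construction: put the down-set $D[y]=\{z: z\le_P y\}$ at the bottom of $L$ (so $y$, its maximum, sits at the top of that block), put the up-set $U[x]=\{w: w\ge_P x\}$ at the top (so $x$, its minimum, sits at the bottom of that block), and list the remaining elements in between; disjointness of $D[y]$ and $U[x]$ is exactly $x\parallel_P y$, and one checks in a line or two that this three-block ordering respects $P$. Your transitive-closure route reaches the same conclusion but replaces that direct check with a case analysis of shortest directed cycles in the augmented relation. Both work; the cycle analysis is more systematic and would generalize if one wanted to add further forcing constraints, while the block construction is shorter and more transparent for this specific statement. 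You did handle the two genuinely delicate points correctly: a simple cycle can use at most one added pair of each kind because all pairs of the first kind target $x$ and all of the second kind leave $y$, and in the mixed case the $\le_P$-stretch from $x$ to $y$ is nonempty precisely because $x\ne y$ (with the coincident pair $(y,x)$ subsumed by the same reasoning).
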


When $P\in\bbB$ and $(a,a')\in A\times A'$ with $a\parallel_P a'$, 
we let $\cgL(a,a')$ denote the set of linear extensions of $P$ satisfying 
the requirements of Proposition~\ref{pro:Hira} for the pair $(a,a')$.

Let $P\in\bbB$ and let $d\in[n]$.  A \textit{matching} (of size~$d$) in $P$
consists of a pair $(T,T')$ of $d$-element subsets of $A$ and $A'$, respectively, and
labelings $T=\{a_1,\dots,a_d\}$ and $T'=\{a'_1,\dots,a'_d\}$ such that 
$a_j\parallel_P a'_j$ for every $j\in[d]$.  There are obvious notions of 
\textit{maximal} and \textit{maximum} matchings.  
Also, when $T\subseteq A$, $T'\subseteq A'$, and $d=|T|=|T'|$, we say that $T$ and
$T'$ \textit{can be matched} if such labelings of $T$ and $T'$ exist.

\begin{lemma}\label{lem:matchings-1}
Let $P\in\bbB$.  Then $dim(P)$ is at most the minimum
size of a maximal matching in $P$.
\end{lemma}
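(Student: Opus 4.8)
The plan is to exhibit, for any maximal matching $(T,T')$ of size $d$, a realizer of $P$ of size at most $d$. Write $T=\{a_1,\dots,a_d\}$ and $T'=\{a'_1,\dots,a'_d\}$ with $a_j\parallel_P a'_j$ for each $j\in[d]$. For each $j\in[d]$, I would invoke Proposition~\ref{pro:Hira} applied to the incomparable pair $(a_j,a'_j)$ to obtain a linear extension $L_j\in\cgL(a_j,a'_j)$ satisfying: (1) every $w$ with $w\parallel_P a_j$ has $a_j>_{L_j}w$; and (2) every $z$ with $z\parallel_P a'_j$ has $z>_{L_j}a'_j$. I claim $\cgF=\{L_1,\dots,L_d\}$ is a realizer of $P$ in the bipartite sense, i.e., for every $(a,a')\in I_P$ there is some $j$ with $a>_{L_j}a'$.

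The key step is the following use of maximality. Fix $(a,a')\in I_P$, so $a\in A$, $a'\in A'$, and $a\parallel_P a'$. Since $(T,T')$ is a \emph{maximal} matching, we cannot extend it by the pair $(a,a')$; hence it is \emph{not} the case that both $a\notin T$ and $a'\notin T'$. (If both failed to lie in their respective halves, then appending $a$ to $T$, $a'$ to $T'$, and setting $a_{d+1}=a$, $a'_{d+1}=a'$ would give a strictly larger matching, since the only new incomparability demanded is $a_{d+1}\parallel_P a'_{d+1}$, which holds.) Therefore either $a=a_j$ for some $j\in[d]$, or $a'=a'_j$ for some $j\in[d]$.

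In the first case, $a=a_j$. Since $a'\parallel_P a_j=a$, condition (1) of Proposition~\ref{pro:Hira} for $L_j$ gives $a_j>_{L_j}a'$, that is $a>_{L_j}a'$, as required. In the second case, $a'=a'_j$. Since $a\parallel_P a'_j=a'$, condition (2) of Proposition~\ref{pro:Hira} for $L_j$ gives $a>_{L_j}a'_j$, that is $a>_{L_j}a'$, as required. (The two cases are not exclusive, but either suffices.) So every incomparable pair of $I_P$ is reversed by some $L_j$, whence $\cgF$ is a realizer and $\dim(P)\le d$. Taking $(T,T')$ to be a maximal matching of minimum size yields the claim.

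I do not anticipate a serious obstacle here: Proposition~\ref{pro:Hira} is quoted as available, and the only real content is the maximality observation, which is immediate from the definition of a matching in $\bbB$ (the sole constraint on a would-be extending pair being its own incomparability). The one point to state carefully is that $\cgF$ is nonempty, which holds as long as $P$ has at least one incomparable pair in $I_P$; if $I_P=\varnothing$ then $\dim(P)=1$ by convention and any single linear extension works, while a maximal matching has size at least $1$ unless $A=A'=\varnothing$, so the inequality holds trivially in the degenerate cases as well.
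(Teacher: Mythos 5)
Your proof is correct and follows essentially the same approach as the paper's: pick linear extensions $L_j\in\cgL(a_j,a'_j)$ and observe that maximality of the matching forces every incomparable pair $(a,a')\in I_P$ to have $a\in T$ or $a'\in T'$, so some $L_j$ reverses it. The paper compresses this verification into the word ``clearly''; you simply supply the details.
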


\begin{proof}
Let $T=\{a_1,\dots,a_d\}$ and $T'=\{a'_1,\dots,a'_d\}$ be labelings determining
a matching $(T,T')$ of size $d$ in $P$.  If this matching is maximal, we show that
$\dim(P)\le d$.  For each $j\in[d]$, let $L_j$ be any linear extension 
in $\cgL(a_j,a'_j)$.  Clearly, $\cgF=\{L_1,\dots,L_d\}$ is a realizer of $P$.
\end{proof}

The following elementary lemma is implicit in~\cite{ErKiTr91} and
explicit in~\cite{BHPT16}.

\begin{lemma}\label{lem:matchings-2}
Let $P\in\bbB$.   If $I_P\neq \emptyset$, 
and $\dim(P)=d$, then there is a realizer $\cgF=\{L_1,\dots,L_d\}$ of
$P$ for which there is a matching $(T,T')$ with $T=\{a_1,\dots,a_d\}$ and
$T'=\{a'_1,\dots,a'_d\}$ in $P$ such that $L_j\in\cgL(a_j,a'_j)$ for
each $j\in[d]$.
\end{lemma}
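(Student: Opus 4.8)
The plan is to begin with an arbitrary minimum realizer and massage it, one linear extension at a time, into the required matching‑aligned form. So I would fix a realizer $\cgF_0=\{L_1,\dots,L_d\}$ with $d=\dim(P)$ and, for $j\in[d]$, let $R_j$ be the set of pairs $(a,a')\in I_P$ with $a>a'$ in $L_j$. Then $\bigcup_j R_j=I_P$, each $R_j$ is a reversible set of incomparable pairs (it contains no alternating cycle), and by minimality of $d$ each ``private set'' $R_j\setminus\bigcup_{i\ne j}R_i$ is nonempty. If $R_j\cup\{q\}$ were still reversible for some $q\in I_P$, replacing $L_j$ by a linear extension reversing $R_j\cup\{q\}$ would give a minimum realizer with larger $\sum_i|R_i|$; so I would choose $\cgF_0$ with $\sum_i|R_i|$ maximum, which forces each $R_j$ to be a \emph{maximal} reversible subset of $I_P$.

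Next, for each $j$ I would let $a_j$ be the highest element of $\Min(P)$ (in the order $L_j$) occurring as the lower point of a pair of $R_j$, and $a_j'$ the lowest element of $\Max(P)$ occurring as the upper point of a pair of $R_j$. Using reversibility of $R_j$ one shows that $a_j\parallel_P a_j'$, so $(a_j,a_j')\in R_j$, and that there is a linear extension $M_j\in\cgL(a_j,a_j')$ that still reverses every pair of $R_j$: because $a_j$ is as high and $a_j'$ is as low as the pairs of $R_j$ permit, forcing $a_j$ above everything incomparable to it and $a_j'$ below everything incomparable to it is compatible with reversing all of $R_j$, the only possible obstruction being an alternating cycle inside $R_j$. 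Then the reversed set of $M_j$ is reversible and contains the maximal reversible set $R_j$, hence equals $R_j$; in particular every pair of $I_P$ with lower point $a_j$, or with upper point $a_j'$, already lies in $R_j$. Replacing each $L_j$ by $M_j$ (which leaves $\sum_i|R_i|$ unchanged), I may assume $L_j\in\cgL(a_j,a_j')$ for every $j$.

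It then remains to show that $a_1,\dots,a_d$ are distinct and $a_1',\dots,a_d'$ are distinct; given that, $(T,T')=(\{a_1,\dots,a_d\},\{a_1',\dots,a_d'\})$ with these labelings is a matching of size $d$ in $P$ and $\cgF=\{L_1,\dots,L_d\}$ is the required realizer. Suppose $a_i=a_j$ with $i\ne j$ (the case $a_i'=a_j'$ being symmetric). By the last remark of the previous step, every pair of $I_P$ with lower point $a_i$ lies in both $R_i$ and $R_j$, so the private set of $L_j$ avoids all such pairs. I would use this slack to re‑assign to $L_j$ a different maximal reversible set that still contains the private set of $L_j$ but whose highest occurring lower point is no longer $a_i$ (and similarly, if necessary, to break a coincidence among the $a_j'$). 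This keeps a $\sum_i|R_i|$‑maximum minimum realizer while strictly decreasing an overlap potential such as $\sum_{i<l}|R_i\cap R_l|$; since that potential is bounded below, iterating terminates with all the $a_j$ and all the $a_j'$ distinct.

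The hard part will be this last step: converting ``no linear extension of a minimum realizer is redundant'' into ``the canonical pairs $(a_j,a_j')$ can be made pairwise disjoint'' requires careful bookkeeping of the maximal reversible sets and of which incomparable pairs each $L_j$ is responsible for. The other delicate point is the construction, in the second step, of the extension $M_j$ that simultaneously reverses all of $R_j$ and places $a_j,a_j'$ at the extremes—this is exactly where the absence of alternating cycles in $R_j$ is used essentially. Everything else (the exchange argument producing maximal reversible sets, and the final assembly into a matching‑aligned realizer) is routine.
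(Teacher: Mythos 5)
The paper's proof is a direct greedy modification that sidesteps the entire difficulty your plan runs into.  Starting from an arbitrary minimum realizer $\{L_1,\dots,L_d\}$, one processes $j=1,\dots,d$ in order: $a_j$ is taken to be the \emph{highest element of $A$ in $L_j$} and $a_j'$ the lowest element of $A'$; minimality of $d$ forces $a_j>a_j'$ in $L_j$ (else $L_j$ reverses nothing), whence $a_j\parallel_P a_j'$.  One then pushes the remaining incomparable elements of $A'$ below $a_j$ and of $A$ above $a_j'$ in $L_j$ (this only enlarges its reversed set, so the family stays a realizer), obtaining $L_j\in\cgL(a_j,a_j')$.  The crucial move you are missing: for every $k>j$, one now relocates $a_j$ to the \emph{very bottom} of $L_k$ and $a_j'$ to the very top.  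This keeps $\cgF$ a realizer because every pair involving $a_j$ or $a_j'$ is now handled by $L_j$, and---since $A\subseteq\Min(P)$, $A'\subseteq\Max(P)$---it is a legal linear extension.  After this retirement step, $a_1,\dots,a_j$ all sit below every element of $A'$ in each later $L_k$, so the element chosen as $a_{j+1}$ (the highest element of $A$ in $L_{j+1}$) is automatically new.  Distinctness of the $a_j$'s and of the $a_j'$'s therefore comes for free; no fixing-up is needed.

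Your proposal, by contrast, never modifies the later extensions to retire used points, and so it has to confront the possibility $a_i=a_j$ head-on.  That is where it genuinely breaks down.  You propose, when $a_i=a_j$, to replace $R_j$ by another maximal reversible set that still contains the private set of $L_j$ but whose highest lower point differs from $a_i$, and to argue termination via the potential $\sum_{i<l}|R_i\cap R_l|$.  But (a) nothing guarantees that any maximal reversible set containing the private set avoids $a_i$ as its highest lower point; (b) even if one exists, different maximal reversible subsets of $I_P$ can have different cardinalities, so the swap may strictly decrease $\sum_i|R_i|$ and destroy the invariant you established in the first paragraph; and (c) even granting (a) and (b), the swap can enlarge $R_j$'s intersection with other $R_l$'s just as easily as shrink it, so strict decrease of the overlap potential is unjustified, and the coincidence could simply reappear elsewhere.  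You flag this step as ``the hard part,'' but it is not merely a matter of bookkeeping---the re-assignment argument, as sketched, does not close.  The reversible-set machinery (alternating cycles, maximality, $\sum_i|R_i|$-maximization) is also more than the lemma requires: the paper's purely order-theoretic surgery on the $L_k$'s is both shorter and airtight, and I would recommend adopting it.
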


\begin{proof}
Given a realizer $\cgF=\{L_1,\dots,L_d\}$, we note that for every $j\in[d]$, the 
highest element of $A$ is over the lowest element of $A'$.  If this assertion
failed for some $j\in[d]$, then $\cgF-\{L_j\}$ would be a realizer for $P$. 
Now  carry out the following
modifications, in an iterative manner, to the linear extensions in $\cgF$.   
For each $j=\in[d]$, let $a_j$ be the highest element of $A$.  Since $d=\dim(P)$,
loss of generality, we may assume that $a_j>a'$ in $L_j$ for every $a'\in A'$ with
$a_j\parallel_P a'$.  If not, simply move all such elements from above
$a_j$ to the gap immediately below it.  An analogous remark holds
for the lowest element $a'_j$ of $A'$ in $L_j$.  Then for all $k$ with $j<k\le d$,
move $a_j$ to the bottom of $L_k$ and move $a'_j$ to the top of $L_k$.
After these steps have been taken, the resulting family is a realizer 
satisfying the requirements of the lemma.
\end{proof}

Throughout this paper, we will exploit connections between posets and graphs, and
we have already discussed the \textit{clique number} of a graph $G$, denoted $\omega(G)$.  
Here is the analogous concept for bipartite posets.  Let $P\in\bbB$.  We call a 
pair $(V,V')$ a \textit{clique pair} when
$V\subseteq A$, $V'\subseteq A'$, and $v <_P v'$ for all
$(v,v')\in V\times V'$.  A clique pair $(V,V')$ is \textit{balanced} if
$|V|=|V')$.  In turn, we define the \textit{balanced clique number} of $P$,
denoted $\wttbcn(P)$, 
as the largest integer $r$ such that $P$ contains a clique pair $(V,V')$ with
$|V|=|V'|=r$. By convention, $\wttbcn(P)=0$ if there
is no pair $(a,a')\in A\times A$ with $a<_P a'$.

Let $G$ be a graph.  A set $I$ of vertices in $G$ is said to be \textit{independent}
if there are no edges in $G$ with both endpoints in $I$.  In turn, the
\textit{independence number} of $G$, denoted $\alpha(G)$, is the
maximum size of an independent set of vertices in $G$.   
Analogously, when $P\in \bbB$, we will refer to a pair $(U,U')$ as an 
\textit{independent pair}, when $U\subseteq A$, $U'\subseteq A'$, and $u\parallel_P u'$ for all
$(u,u')\in U\times U'$.  An independent pair $(U,U')$ is \textit{balanced}
if $|U|=|U'|$, and the \textit{balanced independence number of $P$}, denoted 
$\wttbin(P)$, is the largest
integer $s$ such that $P$ contains an independent pair $(U,U')$ with $|U|=|U'|=s$.
Now $\wttbin(P)=0$ if there is no pair 
$(a,a')\in A\times A'$ with $a\parallel_P a'$.  

The following lemma is implicit in~\cite{BHPT16}.

\begin{lemma}\label{lem:2-mixed}
Let $P\in\bbB$ and suppose that $\wttbin(P)<2$.
If $I_P\neq\emptyset$, then $\dim(P)$ is the minimum size of a maximal matching in $P$.
\end{lemma}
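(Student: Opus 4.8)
The plan is to establish the inequality ``$\dim(P)\ge$ minimum size of a maximal matching in $P$'' and combine it with Lemma~\ref{lem:matchings-1}, which already supplies the reverse inequality. To get the lower bound I would take an optimal realizer together with the associated matching supplied by Lemma~\ref{lem:matchings-2} (this is the point at which the hypothesis $I_P\neq\emptyset$ is used), and argue that, under the hypothesis $\wttbin(P)<2$, this particular matching must in fact be \emph{maximal}. Since its size equals $\dim(P)$, maximality forces $\dim(P)$ to be at least the minimum size of a maximal matching, so the two bounds coincide.

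Concretely: let $\cgF=\{L_1,\dots,L_d\}$ be a realizer with $d=\dim(P)$, and let $(T,T')$ with $T=\{a_1,\dots,a_d\}$, $T'=\{a'_1,\dots,a'_d\}$ be a matching with $L_j\in\cgL(a_j,a'_j)$ for every $j\in[d]$, as provided by Lemma~\ref{lem:matchings-2}. Suppose for contradiction that $(T,T')$ is not maximal. Maximality of a matching is a statement about inclusion of edge sets, so the only way to enlarge $(T,T')$ is to adjoin a single new pair; hence there is a pair $(a,a')\in I_P$ with $a\in A\setminus T$ and $a'\in A'\setminus T'$. Because $\cgF$ is a realizer and $(a,a')\in I_P$, there is some $j\in[d]$ with $a>a'$ in $L_j$.

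The crux is to pin down the positions of $a_j$ and $a'_j$ in $L_j$ relative to $a$ and $a'$. Since $A$ is an antichain, $a\parallel_P a_j$, so property~(1) of Proposition~\ref{pro:Hira} (the property defining membership in $\cgL(a_j,a'_j)$, applied with $w=a$) gives $a_j>a$ in $L_j$; dually, $A'$ is an antichain, so $a'\parallel_P a'_j$, and property~(2) (applied with $z=a'$) gives $a'>a'_j$ in $L_j$. Hence $a_j>a>a'>a'_j$ in $L_j$, so in particular $a_j>a'$ and $a>a'_j$ in $L_j$. Since $L_j$ extends $P$ and $P$ is bipartite (so no pair with one point in $A$ and one in $A'$ can be comparable in the direction forbidden by the $L_j$-order), this forces $a_j\parallel_P a'$ and $a\parallel_P a'_j$. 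Combining with $a_j\parallel_P a'_j$ (the matching) and $a\parallel_P a'$ (our extra pair), all four pairs in $\{a_j,a\}\times\{a'_j,a'\}$ are incomparable in $P$; and $a\neq a_j$, $a'\neq a'_j$ because $a\notin T$, $a'\notin T'$. Thus $(\{a_j,a\},\{a'_j,a'\})$ is a balanced independent pair of size $2$, contradicting $\wttbin(P)<2$. Therefore $(T,T')$ is maximal, which completes the argument.

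I do not expect a genuine obstacle here; the only points needing a little care are the bookkeeping that ``not maximal'' really produces a pair disjoint from $T$ and $T'$ (rather than merely a larger matching obtained by re-pairing), and the invocation of bipartiteness to pass from the $L_j$-order back to incomparability in $P$.
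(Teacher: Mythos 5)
Your proof is correct and follows essentially the same route as the paper: invoke Lemma~\ref{lem:matchings-1} for the upper bound, pull the special realizer and associated matching from Lemma~\ref{lem:matchings-2}, and derive a balanced independent pair of size~$2$ from any failure of maximality. The only difference is that you spell out the step the paper leaves implicit (using $\cgL(a_j,a'_j)$ to place $a_j>_{L_j} a>_{L_j} a'>_{L_j} a'_j$ and then bipartiteness to convert the $L_j$-inequalities back to $\parallel_P$), whereas the paper compresses this into a single ``This implies'' sentence.
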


\begin{proof}
Let $d=\dim(P)$.  We know from Lemma~\ref{lem:matchings-2} that
$d$ is at most the minimum size of a maximal matching.  We now show that
this inequality is tight.  Let $\cgF=\{L_1,\dots,L_d\}$ be a realizer
of $P$ satisfying the requirements of Lemma~\ref{lem:matchings-2}.
Then let $T=\{a_1,\dots,a_d\}$ and $T'=\{a'_1,\dots,a'_d\}$ be the
matching associated with $\cgF$. Set $M=A-T$ and $M'=A'-T'$.

We claim that the matching evidenced by $T$ and $T'$ is maximal.  
Suppose this assertion fails and there is a pair $(x,x')\in M\times M'$ 
with $x\parallel_P x'$.  Since $\cgF$ is a realizer, there is some 
$j\in[d]$ with $x>x'$ in $L_j$.  This implies that both elements
of $U=\{a_j,x\}$ are incomparable with both elements of $U'=\{a'_j,x'\}$.
In turn, this implies that $(U,U')$ is a balanced independent pair in $P$, so that
$\wttbin(P)\ge2$.  The contradiction completes the proof.
\end{proof} 

\section{Matchings, Clique Number,  Independence Number and Talagrand's Inequality}

In this section, we prove New Upper Bound~(1) and New Lower Bound~(1).
The proofs have the same flavor, and where their ranges overlap, we
are able to determine $a.a.s.$, the expected value of $n-\dim(P)$ to
within a multiplicative ratio that is at most $1+o(1)$.

The arguments for these bounds require preliminary lemmas, some of
which may be of independent interest.
Let $s\in [n]$.  Also let $S$ and $S'$ be $s$-element subsets of
$A$ and $A'$, respectively, with $s=|S|=|S'|$.  We say the \textit{defect}
of the pair $(S,S')$ is $s$ if $(S,S')$ is a clique; otherwise, the defect
of $(S,S')$ is the least non-negative integer $\delta$ such that
there are subsets $T\subseteq S$ and $T'\subseteq S'$ with
$s-\delta=|T|=|T'|$ such that $T$ and $T'$ can be matched.

\begin{lemma}\label{lem:matchings-defect}
Suppose that $n^{-1}\log^2 n \le q\le n^{-1/2}\log n$.  Then $a.a.s.$, the following
statement holds:\quad
If $n/2\le s\le n$, $S\subseteq A$, $S'\subseteq A'$, and $s=|S|=|S'|$, then 
the defect of the pair $(S,S')$ is at most $24/q$.
\end{lemma}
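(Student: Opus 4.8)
The plan is to recast the statement, for a fixed $s$ and fixed $s$-subsets $S\subseteq A$, $S'\subseteq A'$, in terms of the bipartite \emph{incomparability graph} $G=G(S,S')$ on vertex set $S\cup S'$ in which $a$ and $a'$ are joined exactly when $a\parallel_P a'$; thus each potential edge is present independently with probability $q$. By the deficiency (defect) form of Hall's theorem, a maximum matching of $G$ has size $s-\max_{W\subseteq S}\bigl(|W|-|N_G(W)|\bigr)$, so the defect of $(S,S')$ exceeds $24/q$ precisely when some $W\subseteq S$ has $|N_G(W)|<|W|-24/q$. The key observation is that $S'\setminus N_G(W)$ is exactly the set of $a'\in S'$ lying above every element of $W$ in $P$, so $\bigl(W,\,S'\setminus N_G(W)\bigr)$ is a clique pair. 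Consequently the defect of $(S,S')$ exceeds $24/q$ if and only if $P$ contains a clique pair $(W,Z')$ with $W\subseteq S$, $Z'\subseteq S'$ and $|W|+|Z'|>s+24/q$ (the clique case of the definition corresponds to $W=S$, $Z'=S'$, and is automatically included here).

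Next I would remove the dependence on $s$, $S$, $S'$ and reduce to a single global event. Since $s$ is an integer with $n/2\le s\le n$, we have $s\ge\max\{\lceil n/2\rceil,|W|,|Z'|\}$ whenever $W\subseteq S$ and $Z'\subseteq S'$; so if \emph{some} admissible pair $(S,S')$ has defect exceeding $24/q$, then $P$ contains a clique pair $(W,Z')$ that is \emph{heavy}, meaning $|W|+|Z'|>\max\{\lceil n/2\rceil,|W|,|Z'|\}+24/q$. It therefore suffices to prove that $a.a.s.$ $P$ has no heavy clique pair. A one-line computation then records that a heavy clique pair $(W,Z')$ must satisfy $\min\{|W|,|Z'|\}>24/q$ (subtract $\max\{|W|,|Z'|\}$ from both sides of the defining inequality) and $\max\{|W|,|Z'|\}>n/4$ (because $|W|+|Z'|>\lceil n/2\rceil$), hence $|W|\cdot|Z'|>6n/q$.

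The final step is a direct union bound. For fixed sizes $w=|W|$ and $z=|Z'|$ forming a heavy shape, the probability that a given pair of subsets of those sizes is a clique pair is $(1-q)^{wz}\le e^{-qwz}\le e^{-6n}$; there are at most $n^{2}$ choices of $(w,z)$ and at most $\binom{n}{w}\binom{n}{z}\le 4^{n}$ pairs of subsets for each, so
\[
\bbP\bigl[\,P\text{ has a heavy clique pair}\,\bigr]\ \le\ n^{2}\,4^{n}\,e^{-6n}\ \longrightarrow\ 0,
\]
since $4e^{-6}<1$. Here the hypothesis $q\ge n^{-1}\log^{2}n$ is used only to guarantee $24/q<n/2\,(<n)$, so that ``heavy'' is a nonvacuous condition and the clique-case reduction above is legitimate; the constant $24$ is chosen so that $\tfrac{1}{4}\cdot 24=6>\ln 4$ (the upper bound $q\le n^{-1/2}\log n$ plays no role).

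The step I expect to be the main obstacle---or at least the one demanding the most care---is getting the quantifier structure and the size of the union bound right. The tempting approach of asserting directly that every incomparability graph $G(S,S')$ has a near-perfect matching founders: there are roughly $4^{n}$ such graphs, while for $q$ as small as $n^{-1}\log^{2}n$ a single such graph fails to have a near-perfect matching with probability only about $e^{-qs}=e^{-\Theta(\log^{2}n)}$, far too large to survive a union bound over $4^{n}$ events. What rescues the argument is that the deficiency reformulation produces not merely a sparse neighborhood but an honest clique pair whose two sides are \emph{both} forced to be large---one side exceeding $n/4$, the other exceeding $24/q$---so that the governing probability $(1-q)^{|W||Z'|}$ collapses to $e^{-6n}$, which comfortably beats $4^{n}$. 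This is also the structural reason the correct defect bound has order $1/q$ rather than $O(1)$.
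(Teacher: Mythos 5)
Your proof is correct and reaches the bound by a cleaner decomposition than the paper's. The paper begins with a union bound over all $2^{2n}$ admissible pairs $(S,S')$ and, for each fixed pair, aims to show the failure probability is $o(2^{-2n})$; it does so by a second union bound over witness sets $W\subseteq S$ of each size $i$, bounding each term by the probability of a clique pair with sides of sizes $i$ and $s-i$, namely $\binom{s}{i}^2(1-q)^{i(s-i)}$. You instead make the Hall-deficiency reformulation explicit and observe that ``some admissible $(S,S')$ has defect $>24/q$'' is itself equivalent to the single global event ``$P$ contains a heavy clique pair,'' which collapses the nested union bounds into one union over clique pairs $(W,Z')$, paying the $4^n$ subset-entropy cost exactly once. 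The structural engine is the same in both---a heavy clique pair has one side of size $\Omega(n)$ and the other of size $\Omega(1/q)$, so $(1-q)^{|W||Z'|}\le e^{-\Omega(n)}$ beats $4^n$---but your organization is more transparent about why the exponent wins and avoids counting the same witness clique pair once for every containing $(S,S')$. You are also right that the hypothesis $q\le n^{-1/2}\log n$ plays no role here, and your route sidesteps a small bookkeeping slip in the paper's inner union bound, where $W'$ is described as ``$i$-element'' although the exponent $i(s-i)$ requires $|W'|=s-i$.
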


\begin{proof}
Set $\delta=24/q$.
There are at most $2^{2n}$ pairs of the form $(S,S')$ where
$S\subseteq A$, $S'\subseteq A'$ and $|S|=|S'|\ge n/2$.  The lemma follows
if we can show that for any such pair, the probability that
there is no matching of size $|S|-\delta$ between $S$ and $S'$ is
$o(2^{-2n})$.  Fix such a pair and let $s=|S|=|S'|$.

For each non-empty subset $W\subseteq S$, let $N(W)$ consist of all elements
of $S'$ that are incomparable with at least one element of $W$.  If
$|N(W)|\ge |W|-\delta$, for all subsets $W\subseteq S$, then a matching
of the desired size exists.  So we consider the event $F$ that holds
if there is some subset $W\subseteq S$ with $|N(W)|<|W|-\delta$.

Trivially, the inequality $|N(W)|\ge |W|-\delta$ holds when
$|W|\le \delta$.  Also, if $F$ fails for all sets $W\subset A$ with
$|W|=s-\delta$, then it fails for all sets $W\subset A$ with
$|W|>s-\delta$.  It follows that $F\subseteq\cup_{i=\delta}^{s-\delta}
F_i$, where event $F_i$ holds when there is an $i$-element subset
$W\subseteq S$ such that $|N(W)|<|W|-\delta$.

Now suppose that $\delta\le i\le s/2$.  Then there are at most $\binom{s}{i}$ choices
for the set $W$.  For each choice of $W$, there are at most $\binom{s}{i}$ choices
for an $i$-element subset $W'$ of $S'$ such that $W'\cap N(W)=\emptyset$.  It follows
that
\begin{align*}
\bbP(F_i)&\le \binom{s}{i}^2 (1-q)^{i(s-i)}\\
        &< \exp(2i\log n)(1-q)^{in/4}              &&\text{since $s-i\ge n/4.$}\\
        &< \exp(2i\log n)\exp(-iqn/4)\\
        &< \exp(-iqn/8)                           &&\text{since $qn/4> 4\log n$.}\\
        &< \exp(-\delta qn/8)                      &&\text{since $i\ge \delta$.}\\
        &= \exp(-3n)                               &&\text{substituting for $\delta$.}\\
        &=o(2^{-2n}/n).
\end{align*}

A symmetric calculation shows $\bbP(F_i)=o(2^{-2n}/n)$ when
$s/2\le i\le s-\delta$.  It follows that
\[
\bbP(F)\le\sum_{i=\delta}^{s-\delta}= o(2^{-2n}).
\]
This completes the proof of the lemma.
\end{proof}

For the proof of the next lemma, we follow (essentially) the notation 
and terminology of Corollaries~4.3.3, 4.3.4 and~4.3.5 in Alon and 
Spencer~\cite{AloSpe16}.  For a random variable $X$, we denote the
expected value of $X$ as $\bbE[X]$.  

\begin{lemma}\label{lem:t=2}
Suppose $n^{-1}\log^2 n\le q\le n^{-1/2}\log n$.  Let
$X$ be the random variable counting the number of balanced independent pairs of
size~$2$. Then $\bbE[X]\rightarrow\infty$ and $a.a.s.$,  $X\sim \bbE[X]$.
\end{lemma}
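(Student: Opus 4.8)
The plan is to apply the second moment method directly. Let $X$ count balanced independent pairs of size~$2$; that is, $X$ counts the number of ways to choose an unordered pair $\{a_1,a_2\}\subseteq A$ and an unordered pair $\{a'_1,a'_2\}\subseteq A'$ such that all four relations $a_i\parallel_P a'_j$ hold. First I would compute $\bbE[X]$: there are $\binom{n}{2}^2$ candidate configurations, each of which succeeds with probability $q^4$ (four independent pairs must all be incomparable), so $\bbE[X]=\binom{n}{2}^2 q^4 = (1/4+o(1))n^4q^4$. Since $q\ge n^{-1}\log^2 n$ gives $n^4q^4\ge \log^8 n\to\infty$, and more to the point $q\ge n^{-1/2}/\text{(poly-log)}$ fails only on the upper side — actually on the stated range $q\ge n^{-1}\log^2 n$ we always have $n^4q^4\to\infty$, so $\bbE[X]\to\infty$ as claimed. (The upper restriction $q\le n^{-1/2}\log n$ will matter for the variance bound, not for $\bbE[X]$.)

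Next I would estimate $\operatorname{Var}[X]$ by the standard expansion $\operatorname{Var}[X]=\sum_{\beta,\gamma}\bigl(\bbP(\beta\wedge\gamma)-\bbP(\beta)\bbP(\gamma)\bigr)$, where $\beta,\gamma$ range over the candidate configurations and $\beta\sim\gamma$ means they share at least one of the four defining incomparability events. Configurations that share no event contribute zero. So I split according to the overlap pattern: two configurations $(\{a_1,a_2\},\{a'_1,a'_2\})$ and $(\{b_1,b_2\},\{b'_1,b'_2\})$ can overlap in the set $A$ (share one or both of the bottom points), in $A'$, or both, and the number of common incomparability events is the product of the number of shared bottom points and the number of shared top points. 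The dominant ``dependent'' term comes from sharing exactly one bottom point and one top point (one common event): there are $O(n^6)$ such ordered pairs of configurations, each contributing $O(q^{4+4-1})=O(q^7)$, giving a contribution $O(n^6q^7)$. Comparing with $\bbE[X]^2\sim n^8q^8/16$, the ratio is $O(n^6q^7/(n^8q^8))=O(1/(n^2q))$, which is $o(1)$ precisely because $q\ge n^{-1}\log^2 n$ forces $n^2q\ge n\log^2 n\to\infty$. The remaining overlap types (sharing two bottom points and one top, etc.) are smaller still by further factors of $q$ or of $n^{-1}$, so they are negligible; I would dispatch them with a one-line bound each. Hence $\operatorname{Var}[X]=o(\bbE[X]^2)$, and Chebyshev's inequality gives $X\sim\bbE[X]$ a.a.s.

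I should double-check one subtlety: whether overlapping configurations can contribute a positive correlation large enough to matter even when the shared event count is small but $\bbP(\beta\mid\gamma)$ is much larger than $\bbP(\beta)$. Here that is not an issue because the events are the underlying independent coin flips, so $\bbP(\beta\wedge\gamma)=q^{8-t}$ exactly, where $t$ is the number of shared flips, and the excess $\bbP(\beta\wedge\gamma)-\bbP(\beta)\bbP(\gamma)=q^{8-t}-q^{8}=q^{8-t}(1-q^t)\le q^{8-t}$. So the crude bound above is valid.

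The main obstacle — really the only one requiring care — is organizing the variance sum so that every overlap type is seen to be $o(\bbE[X]^2)$, and in particular verifying that the worst case (one shared bottom point, one shared top point) is controlled by the hypothesis $q\ge n^{-1}\log^2 n$; the upper bound $q\le n^{-1/2}\log n$ is not actually needed for this lemma but is harmless. Everything else is bookkeeping. I would present the computation of $\bbE[X]$, then tabulate the $O(1)$-many overlap types with their counts and per-pair correlations, sum them, and conclude via Chebyshev.
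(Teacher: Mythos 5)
Your argument is correct and is essentially the paper's argument: the paper invokes the symmetric second-moment criterion of Alon--Spencer (Corollary~4.3.5), which asks for $\Delta^*=\sum_{j\sim i}\bbP[E_j\mid E_i]=o(\bbE[X])$, and this is exactly the packaged form of the direct variance/Chebyshev computation you carry out, with the same classification of overlap types and the same dominant term (one shared bottom point and one shared top point). You are also right that the hypothesis $q\le n^{-1/2}\log n$ is not needed for this lemma; the paper's write-up happens to use it when rearranging the bound on $\Delta^*$, but the direct ratio $\Delta^*/\bbE[X]=O\bigl(1/(n^2q)+1/(n^3q^2)\bigr)=o(1)$ already follows from $q\ge n^{-1}\log^2 n$ alone.
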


\begin{proof}
Set $m=\binom{n}{2}^2$.  Then let $\{(U_1,U'_1),\dots,(U_m,U'_m)\}$ be
a listing of pairs such that for each $i\in[m]$, $U_i$ and $U'_i$ are
$2$-element subsets of $A$ and $A'$, respectively.  For each $i\in[m]$,
we have an event $E_i$ that holds if $(U_i,U'_i)$ is a balanced
independent pair. Also, we let $X_i$ be the associated indicator random 
variable.  Then $X=X_1+\dots+X_m$, and we note that the random variables 
$X_1,\dots,X_m$ are symmetric.

We note that  since $q\ge n^{-1}\log^2 n$,
\begin{align*}
\bbE[X]&=\binom{n}{2}^2q^4
       \ge n^4q^4/5
       \ge \log^8 n/ 5
       \rightarrow\infty.
\end{align*}

We write $E_i\sim E_j$ when $i,j$ are distinct elements of $[m]$ and
the events $E_i$ and $E_j$ are dependent.  Clearly, $E_i\sim E_j$ when
$U_i\times U'_i$ and $U_j\times U'_j$ intersect.  When they intersect, the number
of common pairs is either~$1$ or $2$.  We fix an index
$i$ and then calculate the quantity $\Delta^*$ defined by
\begin{equation}\label{eqn:Delta-star-1}
\Delta^*=\sum_{j\sim i}\bbP[E_j|E_i].
\end{equation}
There are $4(n-2)^2$ choices for the index $j$ so that 
$U_i\times U'_i$ and $U_j\times U_j$ have exactly one common pair.  
For each such $j$, the value of $\bbP[E_j|E_i]$ is
$q^3$.  Similarly, there are $4(n-2)$ choices for the index $j$ so that
$U_i\times U'_i$ and $U_j\times U'_j$ have exactly two common pairs.  For each such
$j$, the value of $\bbP[E_j|E_i]$ is $q^2$.

Using first that $q\ge n^{-1}\log^2 n$ and then that $q\le n^{-1/2}\log n$, we have
\begin{align*}
\Delta^*&=4q^3(n-2)^2+4q^2(n-2)
        < 4q^5n^4/\log^4 n+4q^5n^4/\log^ 6 n
        =o(\bbE[X]).     
\end{align*}
Now the conditions of Corollary~4.3.5 from~\cite{AloSpe16} are satisfied and we
conclude that almost always, $X\sim \bbE[X]$.
\end{proof}

The next lemma is a straightforward application of Markov's inequality.

\begin{lemma}\label{lem:markov-bcn}
Suppose $n^{-1}\log^2 n\le q\le n^{-1/2}\log n$ and $0<\epsilon<1$. Then $a.a.s.$,
\[
\wttbcn(P) < (2+\epsilon)\log(qn)/q.
\]
\end{lemma}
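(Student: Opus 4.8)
The plan is a first-moment (Markov) argument on the number of large clique pairs. Set $r=(2+\epsilon)\log(qn)/q$, which we treat as an integer; it tends to infinity because $q\ge n^{-1}\log^2 n$ forces $\log(qn)\ge 2\log\log n$. Let $X$ be the random variable counting the clique pairs $(V,V')$ in $P$ with $|V|=|V'|=r$. Since any clique pair of size exceeding $r$ restricts to a clique pair of size $r$, the event $\wttbcn(P)\ge r$ entails $X\ge 1$; hence by Markov's inequality $\bbP\bigl(\wttbcn(P)\ge r\bigr)\le\bbP(X\ge 1)\le\bbE[X]$, and the lemma follows once we show $\bbE[X]\to 0$.

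To compute $\bbE[X]$, observe that for a fixed pair of $r$-subsets $V\subseteq A$, $V'\subseteq A'$, the $r^2$ events $v<_P v'$ are mutually independent, each of probability $p=1-q$, so $(V,V')$ is a clique pair with probability $(1-q)^{r^2}$; thus $\bbE[X]=\binom{n}{r}^2(1-q)^{r^2}$. The one delicate point is that the crude estimate $\binom{n}{r}\le n^r$ is \emph{not} strong enough here: it would require $qr>2\log n$, which is false everywhere in the range $n^{-1}\log^2 n\le q\le n^{-1/2}\log n$. Instead I would use $\binom{n}{r}\le(en/r)^r$ and $1-q\le e^{-q}$ to get $\bbE[X]\le\exp\bigl(r(2\log(en/r)-qr)\bigr)$, and then feed in $qr\ge(2+\epsilon)\log(qn)$ together with $n/r\le qn/\bigl((2+\epsilon)\log(qn)\bigr)<qn$. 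This yields $2\log(en/r)-qr\le 2-\epsilon\log(qn)-2\log\log(qn)$.

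Since $q\ge n^{-1}\log^2 n$ gives $\log(qn)\ge 2\log\log n\to\infty$, the right-hand side tends to $-\infty$; as $r\ge 1$ this forces $\log\bbE[X]\to-\infty$, so $\bbE[X]\to 0$, which finishes the proof. The hard part is exactly the point flagged above: one must carry the saving $\log(en/r)\approx\log(qn)-\log\log(qn)$ rather than the wasteful $\log n$, and then check that the $\epsilon$ in the exponent genuinely makes $2\log(en/r)-qr$ negative across the entire range — the margin is thinnest at the upper endpoint $q=n^{-1/2}\log n$, where $r$ has order $n^{1/2}$ and $2\log(en/r)-qr\approx-\tfrac{\epsilon}{2}\log n$.
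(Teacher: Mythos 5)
Your proof is correct, and you have put your finger on a genuine defect in the paper's own argument. The paper bounds $\bbE[X]=\binom{n}{r}^2(1-q)^{r^2}$ by the coarse estimate $n^{2r}\exp(-qr^2)$ and then asserts this equals $\exp(-2\epsilon\log(qn))$. That equality is false: the exponent is $r\bigl(2\log n-qr\bigr)=r\bigl(2\log n-(2+\epsilon)\log(qn)\bigr)$, and since $\log(qn)\le\tfrac12\log n+\log\log n$ throughout the stated range, the bracketed quantity is at least $(1-\tfrac{\epsilon}{2})\log n-(2+\epsilon)\log\log n>0$ for $0<\epsilon<1$, so $n^{2r}\exp(-qr^2)$ in fact tends to infinity rather than zero. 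Your replacement $\binom{n}{r}\le(en/r)^r$ is precisely the refinement needed: it converts $2\log n$ into $2\log(en/r)\approx 2\log(qn)-2\log\log(qn)$, turning the bracket into roughly $-\epsilon\log(qn)-2\log\log(qn)$, which tends to $-\infty$ because $\log(qn)\ge 2\log\log n\to\infty$. The overall strategy is the same first-moment Markov bound on the count of balanced clique pairs of size $r$, but your version is the one that actually closes, and the step you flagged as "the hard part" is exactly where the paper's displayed computation breaks down.
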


\begin{proof}
Set $r=(2+\epsilon)\log(qn)/q$ and let $X$ count the number of balanced
clique pairs of size $r$ in $P$.  Then
\begin{align*}
\bbE[X]&=\binom{n}{r}^2 (1-q)^{r^2}
       <n^{2r}\exp(-qr^2)
       =\exp(-2\epsilon\log(qn))
       =o(1).
\end{align*}
Since $\bbE[X]=o(1)$, it follows that $a.a.s.$, $\wttbcn(P) < r$.
\end{proof}
     
The  elementary inequality in Lemma~\ref{lem:markov-bcn} is essentially best possible.
However, this assertion is \textit{considerably} more challenging to prove\footnote{
We are grateful to Tomasz \L uczak who \textit{greatly} assisted us in this effort,
especially the use of Talagrand's Inequality, and
the setup using a vertex martingale.}.

There are several different forms of Talagrand's inequalities in the
literature.  We will use the version given  in~\cite[Theorem~2.29]{JaLuRu00}. 
Let $\bfR$ and $\bfR^+$ denote, respectively, the set of real numbers and the set of
positive real numbers.
When $(\Lambda_1,\dots,\Lambda_n)$ is a sequence of subsets of $\bfR$,
we denote by $\Lambda$ the product $\Lambda_1\times\dots\times \Lambda_n$.  When
$i\in[n]$, $\overline{z}\in\Lambda$, we denote by $\overline{z}(i)$ the value of 
coordinate $i$ of $\overline{z}$. 

% TAL_1

\begin{theorem}[Talagrand Inequality]\label{thm:Talagrand}%HK
Let  $X=f(Z_{1},\dots,Z_{n})$ be a random variable determined by $n$ 
independent trials $Z_1,\dots,Z_n$, where $f:\Lambda \to \bfR$ and each 
$Z_{i}$  takes on values in a finite set  $\Lambda_{i}$. Suppose 
$c_1,\dots,c_n\in\bfR^{+}$
and $\psi:\bfR\to \bfR$.
If for all $\overline{z},\overline{w}\in\Lambda$, both
\begin{enumerate}
\item for all $i\in[n]$, if $\overline{z}(j)=\overline{w}(j)$ for all 
$j\in [n]-\{i\}$, then $|f(\overline{z})-f(\overline{w})|\le c_i$, and
\item for all $\alpha\in \bbR$, if $f(\overline{z})\ge\alpha$, then
there is $J\subseteq [n]$ such that both 
\begin{enumerate}
\item $\sum_{j\in J}c_j^2\le\psi(\alpha)$,  and
\item if $\overline{z}(j)=\overline{w}(j)$ for all
$j\in J$, then $f(\overline{w})\ge\alpha$, 
\end{enumerate}
\end{enumerate}
then for every $\gamma\in\bbR$ and every $\beta\ge 0$,
\begin{equation*}
\bbP(X\le \gamma-\beta)\bbP(X\ge \gamma)\le e^{-\frac{\beta^2}{4\psi(\gamma)}}. 
\end{equation*}
\end{theorem}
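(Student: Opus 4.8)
This is Talagrand's inequality in the ``certifiable Lipschitz function'' form, and the plan is to derive it from Talagrand's convex distance inequality, which is really the only nonroutine input. For $A\subseteq\Lambda$ and $\overline{z}\in\Lambda$ define the convex distance
\[
d_T(A,\overline{z})=\sup_{\alpha\in\bbR_{\ge0}^{\,n},\ |\alpha|\le1}\ \inf_{\overline{y}\in A}\ \sum_{j:\,\overline{y}(j)\ne\overline{z}(j)}\alpha_j ,
\]
with $|\alpha|$ the Euclidean norm. The core fact I would establish is that for every product measure on $\Lambda$ and every $A$ with $\bbP(A)>0$, writing $Z=(Z_1,\dots,Z_n)$,
\[
\bbE\bigl[e^{d_T(A,Z)^2/4}\bigr]\le\frac{1}{\bbP(A)}.
\]

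I would prove this by induction on $n$. For $n=1$ one has $d_T(A,z)\in\{0,1\}$, so the bound becomes the elementary quadratic inequality $p+(1-p)e^{1/4}\le 1/p$ on $p=\bbP(A)\in(0,1]$, which is checked directly. For the inductive step, split $\Lambda=\Lambda^-\times\Lambda_n$ with $\Lambda^-=\Lambda_1\times\dots\times\Lambda_{n-1}$, and for $\omega\in\Lambda_n$ put $A_\omega=\{\overline{z}^-:(\overline{z}^-,\omega)\in A\}$ and $B=\bigcup_\omega A_\omega$. The geometric heart is the interpolation inequality
\[
d_T\bigl(A,(\overline{z}^-,\omega)\bigr)^2\le(1-\lambda)^2+\bigl(\lambda\,d_T(A_\omega,\overline{z}^-)+(1-\lambda)\,d_T(B,\overline{z}^-)\bigr)^2\qquad(0\le\lambda\le1),
\]
obtained by taking the convex combination $\lambda(\overline{s},0)+(1-\lambda)(\overline{t},1)$ of near-optimal weight vectors $\overline{s},\overline{t}$ for $A_\omega$ and $B$ and reading $d_T^2$ as the squared distance from the origin to a convex hull of difference-indicator vectors. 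Applying $x\mapsto e^{x/4}$, integrating over $Z^-$, and using H\"older together with the inductive hypothesis for $A_\omega$ and for $B$ gives, with $r=\bbP^-(A_\omega)/\bbP^-(B)$,
\[
\bbE^-\bigl[e^{d_T(A,(Z^-,\omega))^2/4}\bigr]\le\frac{1}{\bbP^-(B)}\inf_{0\le\lambda\le1}e^{(1-\lambda)^2/4}\,r^{-\lambda}\le\frac{2-r}{\bbP^-(B)},
\]
the last step being the one-variable estimate $\inf_{0\le\lambda\le1}e^{(1-\lambda)^2/4}r^{-\lambda}\le2-r$ for $r\in(0,1]$. Integrating over $\omega$, using $\int\bbP^-(A_\omega)\,d\mu_n(\omega)=\bbP(A)$, and setting $u=\bbP(A)/\bbP^-(B)$ reduces the claim to $u(2-u)\le1$, i.e.\ $(u-1)^2\ge0$, closing the induction.

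Granting the convex distance inequality, the deduction of the stated theorem is short. Fix $\gamma$ and $\beta\ge0$, set $A=\{\overline{z}:f(\overline{z})\le\gamma-\beta\}$, and (the degenerate cases being trivial) assume $\bbP(A)>0$ and $\psi(\gamma)>0$. I claim $d_T(A,\overline{z})\ge\beta/\sqrt{\psi(\gamma)}$ whenever $f(\overline{z})\ge\gamma$: apply hypothesis~(2) at level $\gamma$ to obtain $J\subseteq[n]$ with $\sum_{j\in J}c_j^2\le\psi(\gamma)$ such that every $\overline{w}$ agreeing with $\overline{z}$ on $J$ satisfies $f(\overline{w})\ge\gamma$. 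Given $\overline{y}\in A$, let $\overline{w}$ agree with $\overline{z}$ on $J$ and with $\overline{y}$ off $J$; then $f(\overline{w})\ge\gamma$, while $\overline{w}$ and $\overline{y}$ differ exactly on $\{j\in J:\overline{z}(j)\ne\overline{y}(j)\}$, so iterating hypothesis~(1) one coordinate at a time yields $\beta\le f(\overline{w})-f(\overline{y})\le\sum_{j\in J:\,\overline{y}(j)\ne\overline{z}(j)}c_j$. Testing the supremum defining $d_T$ against the unit vector $\alpha_j=c_j/\bigl(\sum_{i\in J}c_i^2\bigr)^{1/2}$ for $j\in J$ and $\alpha_j=0$ otherwise then gives $\sum_{j:\,\overline{y}(j)\ne\overline{z}(j)}\alpha_j\ge\beta/\sqrt{\psi(\gamma)}$ for every $\overline{y}\in A$, proving the claim. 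Consequently, integrating $e^{d_T(A,Z)^2/4}$ over the event $\{f(Z)\ge\gamma\}$ alone,
\[
\frac{1}{\bbP(A)}\ \ge\ \bbE\bigl[e^{d_T(A,Z)^2/4}\bigr]\ \ge\ e^{\beta^2/(4\psi(\gamma))}\,\bbP\bigl(f(Z)\ge\gamma\bigr),
\]
which rearranges to $\bbP(X\le\gamma-\beta)\,\bbP(X\ge\gamma)\le e^{-\beta^2/(4\psi(\gamma))}$, since $X=f(Z_1,\dots,Z_n)$ and $\bbP(A)=\bbP(X\le\gamma-\beta)$.

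The main obstacle is entirely inside the induction: proving the interpolation inequality for $d_T$ (which requires a careful convex-hull argument in $\bbR^n$) and the one-variable optimization $\inf_{\lambda}e^{(1-\lambda)^2/4}r^{-\lambda}\le2-r$. Everything after the convex distance inequality—the $n=1$ check, the reduction $(u-1)^2\ge0$, and the certification argument of the last paragraph—is bookkeeping. Since this is a standard result, in the paper itself one would simply cite \cite[Theorem~2.29]{JaLuRu00} rather than reproduce this argument.
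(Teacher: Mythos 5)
The paper does not prove this theorem; it simply cites \cite[Theorem~2.29]{JaLuRu00}, a choice you yourself anticipate in your closing sentence. Your proposal, by contrast, supplies the standard proof via Talagrand's convex distance inequality, and the outline is correct: the base case $p+(1-p)e^{1/4}\le 1/p$ holds since $g(p)=p^2+p(1-p)e^{1/4}$ is increasing on $[0,1]$ with $g(1)=1$; the inductive step (interpolation inequality for $d_T$, convexity of $x\mapsto x^2$ to split $(\lambda a+(1-\lambda)b)^2\le\lambda a^2+(1-\lambda)b^2$, H\"older, then the scalar optimization $\inf_{\lambda\in[0,1]}e^{(1-\lambda)^2/4}r^{-\lambda}\le 2-r$ and $(u-1)^2\ge 0$) is the canonical argument; and the certification step in the last paragraph --- building $\overline{w}$ from $\overline{z}$ on $J$ and $\overline{y}$ off $J$, iterating hypothesis~(1) to get $\beta\le\sum_{j\in J:\,\overline{y}(j)\ne\overline{z}(j)}c_j$, then testing the defining supremum for $d_T$ against $\alpha_j=c_j/\bigl(\sum_{i\in J}c_i^2\bigr)^{1/2}$ on $J$ --- is exactly right, and you have correctly dispatched the degenerate cases $\bbP(A)=0$ and $\psi(\gamma)\le 0$. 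You are honest that the real content is buried in the interpolation inequality and the one-variable estimate, and a fully rigorous write-up would need to work both out; as a proof \emph{plan} what you have is sound, and it matches the proof in the reference the paper cites.
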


% TAL-2

\begin{lemma}\label{lem:Luczak}
If $n^{-1}\log^2n\le q\le n^{-1/2}\log n$ and $0<\epsilon<1$, then $a.a.s.$,
$\wttbcn(P)\ge (2-\epsilon)\log(qn)/q$.
\end{lemma}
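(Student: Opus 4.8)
The plan is to prove the lower bound on $\wttbcn(P)$ by a second moment / concentration argument, using Talagrand's inequality (Theorem~\ref{thm:Talagrand}) as the workhorse, exactly as the footnote advertises. Set $r = (2-\epsilon)\log(qn)/q$ and let $X$ count the number of balanced clique pairs $(V,V')$ of size $r$ in $P$. A direct first-moment computation analogous to Lemma~\ref{lem:markov-bcn} gives $\bbE[X] = \binom{n}{r}^2(1-q)^{r^2}$, and the gap between the exponents $2-\epsilon$ and $2+\epsilon$ is precisely engineered so that $\bbE[X]\to\infty$ (indeed $\bbE[X]$ is at least something like $\exp(\epsilon\log(qn))$ after using $\binom{n}{r}\ge (n/r)^r$ and $1-q\ge e^{-q-q^2}$, with the $q^2r^2$ term negligible in this range of $q$). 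So the whole task is to upgrade ``$\bbE[X]\to\infty$'' to ``$X>0$ a.a.s.''

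The first subtlety is \emph{which} random variable to feed into Talagrand. The naive choice $X$ has the problem that a single edge flip can change $X$ by a huge amount (a vertex sitting in many clique pairs), so the bounded-differences hypothesis (1) fails with small $c_i$. The standard fix, and the one I expect the authors use, is to pass to a \textbf{vertex exposure martingale}: order the trials so that the randomness is revealed one \emph{minimal vertex} at a time, i.e.\ trial $Z_i$ (for $i\in A$) reveals the entire comparability pattern between $a_i\in A$ and all of $A'$. Then $X = f(Z_1,\dots,Z_n)$ where changing one $Z_i$ changes $X$ by at most the number of clique pairs of size $r$ using $a_i$; one bounds this crudely by $\binom{n}{r-1}\binom{n}{r}$, giving a common Lipschitz constant $c$. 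For hypothesis (2) — the certificate condition — if $f(\overline z)\ge\alpha\ge 1$ then there is at least one clique pair $(V,V')$ present, and the set $J$ of the (at most) $r$ minimal-vertex coordinates meeting $V$ certifies that $f\ge 1$; so one can take $\psi$ constant equal to roughly $r c^2$. Plugging $\gamma = \bbE[X]$ (or the median) and $\beta=\gamma$ into the conclusion $\bbP(X\le\gamma-\beta)\bbP(X\ge\gamma)\le e^{-\beta^2/4\psi(\gamma)}$ would then force $\bbP(X=0)$ to be small — but only if $\beta^2/\psi$ is large, i.e.\ if $\bbE[X]^2 \gg r c^2$, which is hopeless because $c$ is astronomically large. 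So the truly correct move is to choose $\gamma$ to be the \emph{median} $\mathrm{Med}(X)$, use Talagrand to show $X$ is concentrated near its median in the sense $\bbP(|X-\mathrm{Med}(X)|\ge t) \le 2e^{-t^2/4\psi}$, and then separately argue the median is positive.

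Since that last step is the crux, here is how I would actually close it. Rather than comparing $X$ to $\bbE[X]$, I would truncate: let $Y$ count balanced clique pairs $(V,V')$ of the \emph{smaller} size $r' = (2-\epsilon')\log(qn)/q$ with an intermediate $\epsilon<\epsilon'<1$ chosen so that $\bbE[Y]\to\infty$ very fast. A Chebyshev / Janson-type bound — using that the clique-pair events for $(V,V')$ and $(W,W')$ are independent unless $(V\times V')\cap(W\times W')\ne\emptyset$, and estimating the correlation sum $\Delta = \sum_{(V,V')\sim(W,W')}\bbP(\text{both present})$ the way $\Delta^*$ was handled in Lemma~\ref{lem:t=2} — should give $\Delta = o(\bbE[Y]^2)$ in this range, hence $Y>0$ a.a.s.\ by the second moment method, hence $\wttbcn(P)\ge r' > r$ a.a.s. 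In fact this route may not even need Talagrand; the reason Talagrand is invoked (per the footnote) is presumably that the variance computation for the \emph{right} size $r$ is not quite good enough, so one needs concentration of $X$ around its median together with a softer lower bound on $\mathrm{Med}(X)$ coming from the fact that $\bbP(X\ge 1)\ge \bbE[X]^2/\bbE[X^2]$ is bounded away from $0$. I would therefore organize the proof as: (i) first-moment estimate showing $\bbE[X]\to\infty$; (ii) second-moment estimate $\bbE[X^2]\le (1+o(1))\bbE[X]^2$, using the overlap classification of clique pairs just as in the $\Delta^*$ computation, which already gives $\bbP(X>0)\ge 1-o(1)$; and if the second moment is too weak at the boundary $q\approx n^{-1/2}\log n$, (iii) set up the vertex-exposure martingale and apply Talagrand with $c_i = c$, $\psi\equiv r c^2$, and $\gamma = \mathrm{Med}(X)$ to show $X$ does not deviate below a positive median. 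The main obstacle is step (iii)'s bookkeeping: choosing the exposure order and the certificate set $J$ so that both Talagrand hypotheses hold with the \emph{same} $c$ for every coordinate, and verifying that $\psi(\gamma)$ is small enough relative to the (positive, but possibly tiny) lower bound on the median that the exponential tail actually closes the gap.
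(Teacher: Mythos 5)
Your diagnosis is correct --- applying Talagrand to the \emph{count} of balanced clique pairs fails because the Lipschitz constant is astronomically large --- but you stop one step short of the actual fix. The paper's trick is to abandon the count entirely and apply Talagrand directly to the random variable $f(P)=\wttbcn(P)$, the balanced clique \emph{number} itself. With the vertex-exposure decomposition you describe (each $Z_i$ reveals all relations from $a_i\in A$ to $A'$), changing a single $Z_i$ changes $\wttbcn(P)$ by at most~$1$, so $c_i=1$ for every $i$, and a certificate for $\wttbcn(P)\ge\alpha$ is the set of $\alpha$ minimal-side coordinates of one witnessing clique pair, giving $\psi(\alpha)=\alpha$. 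With $\gamma=k:=(2-\delta)\log(qn)/q$ and $\beta=\delta\log(qn)/q$ for $\delta=\epsilon/2$, the exponent $\beta^2/\bigl(4\psi(\gamma)\bigr)$ is a positive constant times $k$, and Talagrand yields
\[
\bbP\bigl(\wttbcn(P)\le(2-\epsilon)\log(qn)/q\bigr)\cdot\bbP\bigl(\wttbcn(P)\ge k\bigr)\le e^{-ck},
\]
which closes the argument as soon as one knows $\bbP\bigl(\wttbcn(P)\ge k\bigr)\ge e^{-o(k)}$. That weak bound is exactly what the second-moment step supplies.

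Your second-moment fallback has a direction error and an over-ambitious target. You take $r'=(2-\epsilon')\log(qn)/q$ with $\epsilon'>\epsilon$, which makes $r'<r$, so ``$\wttbcn(P)\ge r'$'' does \emph{not} imply ``$\wttbcn(P)\ge r$''; the inequality ``$r'>r$'' in your write-up is backwards. More importantly, at the size $k$ that matters the paper only establishes $\bbE[Y^2]/(\bbE[Y])^2\le e^{o(k)}$, hence $\bbP(Y>0)\ge e^{-o(k)}$ via the Paley--Zygmund bound --- far weaker than the $\bbP(Y>0)\ge 1-o(1)$ you were hoping for, and this is precisely why a second-moment argument alone does not suffice near the boundary $q\approx n^{-1/2}\log n$. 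The subexponential lower bound $e^{-o(k)}$ is not a spare tire: it is the exact input that Talagrand-on-$\wttbcn(P)$ amplifies to $1-o(1)$. Without switching the Talagrand variable from the count to the clique number itself, neither branch of your plan closes the gap.
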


\begin{proof}
Before we begin calculations, we explain how Theorem~\ref{thm:Talagrand} will be applied.
Label the elements of $A$ and $A'$
arbitrarily (no assumptions about matchings) as $A=\{a_1,\dots,a_n\}$ and $
A'=\{a'_1,\dots,a'_n\}$.  For each $i\in[n]$, let $\Lambda_i$ be the family
of all subsets of $A'$.  Then it is natural to view $\Lambda$ as just
a coding of the posets in $\bbB_n$, i.e., a poset $P\in\bbB(n)$ uniquely
determines for each $i\in[n]$ the set of all $a'\in A'$ with
$a_i<_P a'$. In turn, the random variables $Z_i$ with $i\in[n]$ then
capture the space $\Omega(n,p)$.

We define a function $f:\Lambda\rightarrow N$ by
setting $f(P)=\wttbcn(P)$.  Then we have a random variable $X=f(P)$.  We note 
that if $P$ and $Q$ are posets in $\bbB(n)$, and there
is some $a_i\in A$ such that the only differences between $P$ and
$Q$ involve pairs from $\{a_i\}\times A'$, 
then $|f(P)-f(Q)|\le 1$, i.e., we take
$c_i=1$ for all $i\in[n]$.  Furthermore, whenever we have
$f(P)\ge \alpha$, this can be certified by a set $J$ of size $\alpha$.
So we simply take $\psi(\alpha)=\alpha$.

With $\epsilon$ fixed, we want to show that 
$\bbP\bigl(X >(2-\epsilon)\log(qn)/q\bigr)$ tends to $1$.  Set
$\delta=\epsilon/2$.  It is enough to prove that
$\bbP\bigl(X >(2-\epsilon)\log(qn)/q\bigr)\ge 1-\delta$.
Set $k=\gamma=(2-\delta)\log(qn)/q$ and $\beta=\delta \log(qn)/q$.  Note
that $\gamma-\beta=(2-\epsilon)\log(qn)/q$.  Substituting
these values into Talagrand's inequality,  we obtain:
\begin{equation*}
\bbP\bigl(X\le(2-\epsilon)\log(qn)/q\bigr)\bbP\bigl(X\ge k\bigr)\le e^{-\frac{\delta^2\log^2(qn)}{4q^2k}}.
\end{equation*}
Substituting for $k$ in the right hand side of the last inequality, we obtain
\begin{align*}
\bbP\bigl(X\le(2-\epsilon)\log(qn)/q\bigr)\bbP\bigl(X\ge k\bigr)&\le e^{-ck}&\text{where }
c= \frac{\delta^2}{4(2-\delta)^2}.
\end{align*}

This implies that either (1)~$\bbP\bigl(X\le (2-\epsilon)\log(qn)/q\bigr)\le\delta$ or
(2)~$\bbP(X\ge k)\le e^{-ck}/\delta$.  To complete the proof, we need only show
that statement~(2) cannot hold.  This will be accomplished by showing that
\begin{equation}\label{eqn:not-so-fast}
\bbP(X\ge k) \ge e^{-o(k)}.
\end{equation}

Now let $Y$ be the random variable counting the number of balanced
clique pairs $(V,V')$ of size~$k$.  Then $X\ge k$ if and only if $Y>0$.
Then from~\cite[Remark~3.1]{JaLuRu00}, we have
\begin{equation}\label{eqn:remark-bound}
\bbP(Y>0)\ge \frac{(\bbE[Y])^2}{\bbE[Y^2]}.
\end{equation}

Accordingly, we want to show that $\bbP(Y>0)\ge e^{-o(k)}$.
Working with the reciprocal, which simplifies the analysis,
we then want to show that:
\begin{equation}\label{eqn:Luczak-up}
\frac{\bbE[Y^2]}{(\bbE[Y])^2}\le e^{o(k)}.
\end{equation}
We have:
\begin{align*}
\frac{\bbE[Y^2]}{(\bbE[Y])^2}&=\frac{\bigl(\bbE[Y]\bigr)\sum_{i=0}^k\sum_{j=0}^k \binom{k}{i}
  \binom {n-k}{k-i}\binom{k}{j}\binom{n-k}{k-j}(1-q)^{k^2-ij}}{(\bbE[Y])^2}\\
&=\sum_{i=0}^k\sum_{j=0}^k \frac{\binom{k}{i}\binom{n-k}{k-i}\binom{k}{j}\binom{n-k}{k-j}}
 {\binom{n}{k}^2}(1-q)^{-ij}.
\end{align*}

There are $(k+1)^2$ terms in the sum, so it suffices to show that every term has
size at most $\exp(o(k))$.  Clearly, this holds whenever $i=0$ or $j=0$.
So we are concerned only with terms where $i,j\ge1$.  Trivially, we
have $\binom{k}{i}\binom{n-k}{k-i}\le\binom{n}{k}$, so that
\begin{equation}\label{eqn:easy}
\frac{\binom{k}{i}\binom{n-k}{k-i}}{\binom{n}{k}}\le 1.
\end{equation}
A symmetric inequality holds for $j$.  

Using only the elementary bound in~(\ref{eqn:easy}), we observe that if 
$i\le k/\log^2(qn)$, then the term for the pair $(i,j)$ is at most 
\begin{align*}
(1-q)^{-ij}&\le \exp(qk^2/\log^2(qn))\\
           &\le \exp(qk\frac{2\log(qn)}{q}/\log^2(qn))\\
           &=\exp(2k/\log(qn))\\
           &=\exp(o(k)).
\end{align*}
A symmetric statement holds when $j\le k/\log^2(qn)$.

Now we focus on the terms when $i,j\ge k/\log^2(qn)$. For such terms,
we have the following improved bound.
\begin{align*}
\frac{\binom{k}{i}\binom{n-k}{k-i}}{\binom{n}{k}}<
  \binom{k}{i}\frac{\binom{n}{k-i}}{\binom{n}{k}}
  <\Biggl(\frac{3k}{i}\Biggr)^i\Biggl(\frac{k}{n}\Biggr)^i%HK: Fixed biggr twice  
  =\Biggl(\frac{3k^2}{in}\Biggr)^i.
\end{align*}
A symmetric inequality holds for $j$.    Accordingly, when $k/\log^2(qn)\le i,j\le k$,
we have the following upper bound on the term for $(i,j)$:
\begin{equation}\label{eqn:gen-ub-ij}
\Biggl(\frac{3k\log^2(qn)}{n}\Biggr)^{i+j}\exp(qij).
\end{equation}

We assume without loss of generality that $i\le j$.  We then take the logarithm of 
the expression in~(\ref{eqn:gen-ub-ij}) to obtain:
\begin{equation}\label{eqn:f(i)}
(i+j)\bigl[\log3+\log k +2\log\log(qn)-\log n\bigr]+qij
\end{equation}
Considering $j$ fixed, this is a linear function of $i$, defined on the
interval $[k/\log^2(qn), j]$.  So it achieves its maximum
value either at $i=k/\log^2(qn)$ or at $i=j$.  The choice depends on the sign of the
coefficient of $i$,  which is
\begin{multline}\label{eqn:coefficient-of-i}
\log 3+\log k +2\log\log(qn)-\log n + qj=\\
\log 3+\log(2-\delta)+ 3\log\log(qn) - \log(qn)+qj.
\end{multline}

\medskip
\noindent
\textbf{Case 1.}\quad
$j\le \bigl(\log(qn)-3\log\log(qn)-\log 3-\log(2-\delta)\bigr)/q$.

\smallskip
In this case, the coefficient of $i$ is negative, so the maximum value is
achieved when $i=k/\log^2(qn)$.   The term associated with $(i,j)$ is
less than $\exp(qij)$, and for $j$, we use the
generous upper bound bound $j\le \log(qn)/q$.   It follows that the term for
$(i,j)$ is at most:
\[
\exp(q\frac{k}{\log^2(qn)}\frac{\log(qn)}{q}=\exp(k/\log(qn)=\exp(o(k)).
\]

\medskip
\noindent
\textbf{Case 2.}\quad
$j> \bigl(\log(qn)-3\log\log(qn)-\log3-\log(2-\delta)\bigr)/2$.

In this case, the maximum value is achieved when $i=j$, and the term
associated with $(i,j)$ is at most:
\begin{equation}\label{eqn:case2-1}
\Biggl(\Bigl(\frac{9k^2\log^4(qn)}{n^2}\Bigr)^2\exp(qj)\Bigg)^j
\end{equation}
In~(\ref{eqn:case2-1}), we note that
\[
\exp(qj)\le \exp(qk)=(qn)^{2-\delta}.
\]
Using this inequality and substituting for $k$, the expression in~(\ref{eqn:case2-1})
becomes:
\[
\Biggl(\frac{9\log^6(qn)}{(qn)^\delta}\Biggr)=o(1).
\]
With this observation, the proof of the lemma is complete.
\end{proof}

\begin{lemma}[New Upper Bound (1)]
Suppose $n^{-1}\log^2 n\le q\le n^{-1/2}\log n$ and $0<\epsilon<1$.
Then $a.a.s.$, $\dim(P) \le n - (2-\epsilon)\log(qn)/q$.
\end{lemma}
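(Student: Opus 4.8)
The plan is to invoke Lemma~\ref{lem:matchings-1}, which reduces the task to exhibiting, a.a.s., a maximal matching of size at most $n - (2-\epsilon)\log(qn)/q$; the matching will be built around a large balanced clique pair. Write $\delta_0 = 24/q$ and $r_0 = (2 - \epsilon/2)\log(qn)/q$ (an integer, by the paper's convention, since it tends to infinity with $n$). By Lemma~\ref{lem:Luczak} applied with $\epsilon/2$ in place of $\epsilon$, a.a.s. $\wttbcn(P) \ge r_0$, so a.a.s. there is a balanced clique pair $(V,V')$ with $|V| = |V'| = r_0$ (take a subpair of a maximum one if necessary). Note $r_0 = o(n)$ since $q \ge n^{-1}\log^2 n$, so in particular $n - r_0 \ge n/2$ for large $n$. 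I would condition on the further a.a.s. event provided by Lemma~\ref{lem:matchings-defect}.

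Concretely, I would set $S = A \setminus V$ and $S' = A' \setminus V'$, so $|S| = |S'| = n - r_0 \ge n/2$. By Lemma~\ref{lem:matchings-defect} the defect of $(S, S')$ is at most $\delta_0$, so there exist $T \subseteq S$ and $T' \subseteq S'$ that can be matched, with $|T| = |T'| = (n - r_0) - \delta'$ for some $\delta' \le \delta_0$. I would take such a matching of $T$ to $T'$ and greedily extend it: while some pair $(x,x')$ with $x\in A$, $x'\in A'$ outside the current matching has $x \parallel_P x'$, add it. The process terminates at a maximal matching $(\widehat{T}, \widehat{T}')$.

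The crux of the argument --- and the only point not already handled by the lemmas cited --- is that this greedy completion adds at most $2\delta'$ pairs. Each added pair $(x,x')$ has $x \in A \setminus T = V \cup (S \setminus T)$ and $x' \in A' \setminus T' = V' \cup (S' \setminus T')$, but $x \in V$ and $x' \in V'$ cannot both occur, since $(V,V')$ is a clique pair and would force $x <_P x'$ rather than $x \parallel_P x'$. Hence every added pair consumes a previously unused element of $(S \setminus T) \cup (S' \setminus T')$, a set of size $2\delta'$, so at most $2\delta'$ pairs are added. Therefore $|\widehat{T}| \le |T| + 2\delta' = n - r_0 + \delta' \le n - r_0 + \delta_0$, and by Lemma~\ref{lem:matchings-1},
\[
\dim(P) \le n - r_0 + \frac{24}{q} = n - \frac{(2 - \epsilon/2)\log(qn) - 24}{q}.
\]
To finish, I would note that $qn \ge \log^2 n$ forces $\log(qn) \to \infty$, so $(\epsilon/2)\log(qn) \ge 24$ for all large $n$, whence $(2-\epsilon/2)\log(qn) - 24 \ge (2-\epsilon)\log(qn)$ and the claimed bound follows.

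I expect essentially all the difficulty to live upstream, in Lemma~\ref{lem:Luczak} (the Talagrand-plus-second-moment lower bound on $\wttbcn$) and Lemma~\ref{lem:matchings-defect} (the Hall-type defect estimate). Given these, the present lemma is a short gluing argument whose one genuine idea is that an internally incomparability-free clique pair caps how much a near-perfect matching of the complements can grow; the remaining steps --- checking $r_0 = o(n)$ and absorbing the additive $24/q$ loss into the slack between $2 - \epsilon/2$ and $2 - \epsilon$ --- are routine.
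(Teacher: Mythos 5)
Your proof is correct and takes essentially the same route as the paper: find a balanced clique pair $(V,V')$ of size $r$ via Lemma~\ref{lem:Luczak}, use Lemma~\ref{lem:matchings-defect} to match $A\setminus V$ to $A'\setminus V'$ up to a small defect, and observe that the clique pair blocks further matching extension. The only difference is bookkeeping: you greedily extend the matching inside $P$ and count how many leftover vertices of $S\setminus T$ and $S'\setminus T'$ get consumed, whereas the paper forms the subposet $Q$ on $T\cup T'\cup V\cup V'$, applies Lemma~\ref{lem:matchings-1} to $Q$ (where $(T,T')$ is maximal), and then adds back the at most $2\delta$ deleted points one at a time.
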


\begin{proof}
Let $\epsilon_1=\epsilon/2$.  Then set $r=(2-\epsilon_1)\log(qn)/q$.
Using Lemma~\ref{lem:Luczak}, it follows that $a.a.s.$, the following two statements
hold: (1)~the balanced clique number of $P$ is
at least $r$, and~(2)~for every pair $(S,S')$, with $S\subseteq A$,
$S'\subseteq A'$ and $|S|=|S'|\ge n/2$, the defect of $(S,S')$ is less than 
$24/q$.

Set $\delta=24/q$, and let $(V,V')$ be a balanced clique pair of size~$r$ in $P$.  
Set $S=A-V$ and $S'=A'-V'$.  Note that $|S|=|S|> n/2$.  Let $(T,T')$ be a 
maximum matching in $S\cup S'$.  Then $|T|=|T'|\ge |S|-\delta$.
Let $Q$ be the subposet of $P$ determined by the points in the matching together
with the points in $V\cup V'$.  Then $(T,T')$ is a maximal matching in $Q$.  It follows
from Lemma~\ref{lem:matchings-1} that $\dim(Q)\le|T| \le n-r$.
We note that
\[
2\delta < 48/q <\epsilon_1\log(qn)/q.
\]
The removal of a point from a bipartite poset decreases dimension by at most~$1$,
and $Q$ is obtained from $P$ by removing at most $2\delta$ points.
It follows that $a.a.s.$,
\begin{align*}
\dim(P)&\le\dim(Q)+2\delta
       \le (n-r)+\epsilon_1 \log(qn)/q
       = n-(2-\epsilon)\log(qn)/q.\qedhere
\end{align*}
\end{proof}

Readers will note that the proof of the next result uses the ``alteration''
method (see Chapter~3 in Alon and Spencer~\cite{AloSpe16}) first used
by Erd\H{o}s in his probabilistic proof of the existence of graphs
with large girth and large chromatic number.

\begin{lemma}[New Lower Bound~(1)]
Suppose $0<\epsilon<1$.  If $n^{-1}\log^2 n\le q\le n^{-4/5}$, then $a.a.s.$,
$\dim(P)> n-(2+\epsilon)\log(qn)/q$.
\end{lemma}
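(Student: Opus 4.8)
The plan is to show that $a.a.s.$ every matching in $P$ that is maximal has size at least $n - (2+\epsilon)\log(qn)/q$; combined with Lemma~\ref{lem:2-mixed} (or even just the easy direction used there, together with a separate verification that $\wttbin(P)<2$ fails only negligibly, or a direct counting argument), this forces $\dim(P)$ up. Actually, since we want a \emph{lower} bound on $\dim(P)$, the cleaner route is to use the fact recorded after Lemma~\ref{lem:matchings-2}: for any realizer $\cgF$ of size $d=\dim(P)$ there is an associated matching $(T,T')$ of size $d$, and $(A-T, A'-T')$ must then be comparable (a clique pair) — otherwise one could enlarge; more precisely, one argues that if $\dim(P)=d$ is small then $P$ contains a large clique pair. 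So I would instead aim to bound $\wttbcn(P)$ from above by $(2+\epsilon)\log(qn)/q$ via the alteration method, and then show that a small realizer is impossible because it would certify a clique pair larger than this threshold. Let me restate the strategy concretely: set $r = (2+\epsilon)\log(qn)/q$, and show $a.a.s.$ there is no balanced clique pair of size close to $r$ — but wait, Lemma~\ref{lem:markov-bcn} already gives $\wttbcn(P) < (2+\epsilon)\log(qn)/q$ $a.a.s.$ directly. So the real content is the \emph{reverse} implication: a realizer of size $d$ yields a clique pair of size roughly $n-d$, hence $n - d < (2+\epsilon)\log(qn)/q$, i.e.\ $d > n - (2+\epsilon)\log(qn)/q$.

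So the first step is the structural one: take $\cgF = \{L_1,\dots,L_d\}$ a realizer of $P$ with $d=\dim(P)$, and invoke Lemma~\ref{lem:matchings-2} to get a matching $(T,T')$, $T=\{a_1,\dots,a_d\}$, $T'=\{a'_1,\dots,a'_d\}$, with $L_j \in \cgL(a_j,a'_j)$. Set $V = A - T$ and $V' = A' - T'$; both have size $n-d$. I claim $(V,V')$ is a clique pair, i.e.\ $v <_P v'$ for all $(v,v')\in V\times V'$. If not, some $(v,v')$ with $v\parallel_P v'$; since $\cgF$ is a realizer there is $j$ with $v >_{L_j} v'$. But $L_j \in \cgL(a_j,a'_j)$ means $a_j$ sits above everything incomparable to it and $a'_j$ below everything incomparable to it — this is where I need to chase the Proposition~\ref{pro:Hira} properties carefully to derive a contradiction (either $v$ and $v'$ get forced into positions inconsistent with $v>_{L_j}v'$, or we produce a balanced independent pair of size $2$, as in the proof of Lemma~\ref{lem:2-mixed}). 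The honest version is probably: the existence of such $(v,v')$ gives $\wttbin(P)\ge 2$ via $\{a_j,v\}$ and $\{a'_j,v'\}$, which is \emph{not} a.a.s.\ false in this range, so this exact argument needs the alteration twist instead.

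Hence the actual proof — matching the hint about Erd\H{o}s's alteration method — should be: let $X$ count balanced clique pairs of size $r = (2+\epsilon)\log(qn)/q$ and let $W$ count balanced independent pairs of size $2$; by Lemma~\ref{lem:markov-bcn}'s computation $\bbE[X]=o(1)$ is too strong, so instead take $r' = (2-\epsilon')\log(qn)/q$ for a carefully chosen $\epsilon'$ and show $\bbE[X']$ is large while $\bbE[W]$ is controlled, then delete one point from each balanced independent pair of size $2$ to destroy all of them; the resulting subposet $P'$ has $\wttbin(P')<2$, still has many vertices (we deleted $O(\bbE[W])$ points, which must be shown to be $o$ of the target gap), and retains a clique pair of size $\ge r' - o(r')$. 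On $P'$, Lemma~\ref{lem:2-mixed} applies, so $\dim(P')$ equals the minimum maximal matching size; but a maximal matching avoids the clique pair's $V\times V'$ edges entirely... no — rather, one shows every maximal matching has size $\ge |A(P')| - \wttbcn(P') \ge n - O(\bbE[W]) - r'$, using that the complement of a maximum matching's support is a clique pair. Then $\dim(P) \ge \dim(P') - O(\bbE[W]) \ge n - (2+\epsilon)\log(qn)/q$ after absorbing all error terms, using $q\le n^{-4/5}$ precisely to guarantee $\bbE[W] = \binom{n}{2}^2 q^4 = O(n^4 q^4) = O(n^{4/5})= o(\log(qn)/q)$... and here I expect the \textbf{main obstacle}: verifying that $\bbE[W]$, the number of size-$2$ independent pairs being deleted, is genuinely negligible compared to the gap $\log(qn)/q \asymp nq\cdot \frac{\log(qn)}{q^2 n}$ across the \emph{entire} range $n^{-1}\log^2 n \le q \le n^{-4/5}$ — near the lower end $q\approx n^{-1}\log^2 n$ we have $\bbE[W]\approx \log^8 n$ while the gap is $\approx n/\log n$, which is fine, but one must also ensure the clique pair survives the deletions (deleting $O(\bbE[W])$ points from $V\cup V'$) and that the maximal-matching lower bound genuinely gives $|A(P')| - \wttbcn(P')$ and not something weaker. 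That last structural inequality — every maximal matching in a poset with $\wttbin<2$ has support covering all but a clique pair — is the technical heart and should be extracted as the key lemma, then the probabilistic alteration is routine.
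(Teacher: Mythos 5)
Your proposal converges on the paper's argument: bound $\wttbcn(P)$ from above via Lemma~\ref{lem:markov-bcn}, show via Lemma~\ref{lem:t=2} that there are few balanced independent pairs of size $2$, delete one point from each to obtain a subposet $Q$ with $\wttbin(Q)<2$, then apply Lemma~\ref{lem:2-mixed} together with the structural fact you correctly isolated --- that the complement of a maximal matching in $Q$ is a clique pair, hence of size less than $r=(2+\epsilon_1)\log(qn)/q$. The one slip is in your final paragraph, where you switch to $r'=(2-\epsilon')\log(qn)/q$, speak of showing $\bbE[X']$ is large, and of $Q$ retaining a clique pair of size roughly $r'$; that is the lower-bound direction on the clique number (the territory of Lemma~\ref{lem:Luczak}), which is neither needed here nor in the right direction. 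Only the Markov upper bound $\wttbcn(P)<(2+\epsilon_1)\log(qn)/q$ is required, as you correctly noted earlier in your own write-up, and no exhibition of a large clique pair enters the argument. With that corrected, and with your verification that the number of deletions is $O(n^4q^4)=o(\log(qn)/q)$ across the whole range $n^{-1}\log^2 n\le q\le n^{-4/5}$, your outline matches the paper's proof.
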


\begin{proof}
Set $\epsilon_1=\epsilon/2$ and $r=(2+\epsilon_1)\log(qn)/q$.
Using Lemma~\ref{lem:markov-bcn}, we know that $a.a.s.$,
the balanced clique number of $P$ is less than $r$.

Let $X$ be the random variable counting the number of balanced independent pairs $(U,U')$  
with $|U|=|U'|=2$.  Using Lemma~\ref{lem:t=2}, we know that
$a.a.s.$, $X\sim E[X]$.  Since $E[X]=\binom{n}{2}^2q^4\sim q^4n^4/4$, we
will settle for the weaker inequality $a.a.s.$, $X\le n^4q^4/2$.  
When $q\le n^{-4/5}$, this implies that $a.a.s.$, $X < \epsilon_1\log(qn)/q$.

It follows that there are subsets $S\subset A$ and $S'\subset A'$ with 
$|S|=|S'|=n-\epsilon_1 q^{-1}\log(qn)$ such that $a.a.s.$, the subposet $Q$ of $P$ 
with ground set $S\cup S'$ has balanced clique number less than~$r$ and balanced
independence number less than~$2$  Then $a.a.s.$,
\begin{align*}
\dim(P)&\ge \dim(Q)\\
       &>\bigl(n-\epsilon_1\log(qn)/q\bigr) -(2+\epsilon_1)\log(qn)/q\\
       &=n-(2+\epsilon)\log(qn)/q.\qedhere
\end{align*}
\end{proof}

\section{Lower Bounds and an Application of Janson's Inequality}

In this section, we prove New Lower Bounds (2), (3) and~(4).
All three proofs require concepts developed 
in~\cite{ErKiTr91}, starting with a good bound on the expected value of
the balanced independence number.  The following elementary lemma uses
only Markov's inequality.

\begin{lemma}\label{lem:markov-bin}
Suppose $n^{-1}\log^2 n/n\le q\le 1/2$ and $0<\epsilon<1$. Then $a.a.s.$,
\[
\wttbin(P)< t:=  \lceil 2[\log n+\log\log(1/q)]/\log(1/q)\rceil.
\]
\end{lemma}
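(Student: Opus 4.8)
The plan is to apply the first moment method (Markov's inequality) directly to the random variable $X$ counting balanced independent pairs $(U,U')$ with $|U|=|U'|=t$, where $t$ is the stated threshold. The key inequality to establish is $\bbE[X]=o(1)$, since then $a.a.s.$ $X=0$, which is exactly the statement $\wttbin(P)<t$. First I would write down
\[
\bbE[X]=\binom{n}{t}^2 q^{t^2},
\]
using that a fixed pair of $t$-element sets $(U,U')$ is independent precisely when all $t^2$ of the relevant pairs in $U\times U'$ are incomparable, each occurring with probability $q$, and these events are independent.

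Next I would bound $\binom{n}{t}\le n^t$ (or $(en/t)^t$ if a sharper bound is needed), so that
\[
\bbE[X]\le n^{2t}q^{t^2}=\exp\bigl(2t\log n - t^2\log(1/q)\bigr)=\exp\Bigl(t\bigl[2\log n - t\log(1/q)\bigr]\Bigr).
\]
The choice $t=\lceil 2[\log n+\log\log(1/q)]/\log(1/q)\rceil$ is engineered precisely so that the bracketed quantity is negative and bounded away from $0$: indeed $t\log(1/q)\ge 2\log n + 2\log\log(1/q)$, so $2\log n - t\log(1/q)\le -2\log\log(1/q)$. Since $t\ge 2$ (as $n^{-1}\log^2 n\le q\le 1/2$ forces $\log(1/q)\le \log 2 + o(\log n)$ while $\log n\to\infty$, the ceiling is at least $2$ for large $n$), this gives $\bbE[X]\le \exp(-4\log\log(1/q))=\log^{-4}(1/q)=o(1)$, using that $\log(1/q)\to\infty$ because $q\le n^{-1}\log^2 n$ — wait, here $q$ ranges up to $1/2$, so $\log(1/q)$ need not tend to infinity. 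In that regime ($q$ bounded below by a constant) I would instead note $t = \Theta(\log n/\log(1/q)) = \Theta(\log n)\to\infty$ and $2\log n - t\log(1/q)\le -2\log\log(1/q)\le 0$, so $\bbE[X]\le \exp(t\cdot(-2\log\log(1/q)))$; if $\log\log(1/q)$ is a positive constant this is $\exp(-\Omega(\log n))=o(1)$, and if $\log\log(1/q)\le 0$ one gets a slightly negative bracket from the ceiling's rounding up by strictly more than needed — more carefully, since $t$ is the ceiling, $t\log(1/q) - 2\log n - 2\log\log(1/q)\ge 0$ but we also want it bounded away so I would use that rounding up adds at least a definite amount, or simply observe $\bbE[X]\le\exp(t[2\log n - t\log(1/q)])$ and that replacing $t$ by the ceiling only decreases the product once the bracket is negative, yielding $o(1)$ in all subranges.

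The main obstacle is the uniformity across the wide range $n^{-1}\log^2 n\le q\le 1/2$: when $q$ is close to $n^{-1}\log^2 n$ the factor $\log(1/q)$ is of order $\log n$ and $t$ is $O(1)$ (in fact $t=2$ or $3$), while when $q=1/2$ we have $\log(1/q)=\log 2$ and $t=\Theta(\log n)$. I would handle this by keeping the estimate in the clean form $\bbE[X]\le\exp\bigl(t[2\log n - t\log(1/q)]\bigr)$ and arguing that the defining ceiling guarantees $t\log(1/q)-2\log n\ge 2\log\log(1/q)$, hence the exponent is at most $-2t\log\log(1/q)$; since $t\ge 2$ always and, when $\log\log(1/q)$ could be small or negative, one uses the slack inherent in the ceiling (the true value $t$ exceeds the fractional target, and $\log(1/q)\ge\log 2$ gives a gain of at least $\log 2$ from the rounding) to conclude the exponent is bounded above by a quantity tending to $-\infty$. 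This makes $\bbE[X]=o(1)$ throughout, and Markov's inequality finishes the proof.
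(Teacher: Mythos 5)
Your approach is the same as the paper's: first moment on the number of balanced independent pairs of size $t$, then Markov. The paper's entire proof is the single line $\bbE[Y]=\binom{n}{t}^2q^{t^2}<\exp\bigl(t(2\log n-t\log(1/q))\bigr)=o(1)$, which is exactly your estimate via $\binom{n}{t}\le n^t$. Your suspicion about the regime where $q$ is bounded away from $0$ is well founded, but your resolution has a genuine error: the assertion $2\log n-t\log(1/q)\le -2\log\log(1/q)\le 0$ is false for $1/e<q\le 1/2$, since there $\log\log(1/q)<0$ (natural logs), so the ceiling only yields an \emph{upper} bound on the bracket that is positive. The appeals to the ceiling do not repair this: the ceiling raises the ideal $t$ by some $\delta\in[0,1)$, adding only $\delta\log(1/q)$ to $t\log(1/q)$, and $\delta$ can be arbitrarily small --- there is no guaranteed gain of ``at least $\log 2$''. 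The remark that replacing $t$ by its ceiling ``only decreases the product once the bracket is negative'' is circular, since negativity of the bracket is precisely what needs to be shown.

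The fix is the sharper bound $\binom{n}{t}\le(en/t)^t$ that you offer parenthetically but never use. It gives $\bbE[Y]\le\exp\bigl(t(2+2\log n-2\log t-t\log(1/q))\bigr)$; inserting $t\log(1/q)\ge 2\log n+2\log\log(1/q)$ from the ceiling, the bracket is at most $2-2\log t-2\log\log(1/q)$. Uniformly over $n^{-1}\log^2 n\le q\le 1/2$ one has $\log t+\log\log(1/q)\to\infty$: from $t\ge 2\log n(1+o(1))/\log(1/q)$ one gets $\log t\ge \log 2+\log\log n-\log\log(1/q)+o(1)$, hence $\log t+\log\log(1/q)\ge\log 2+\log\log n+o(1)$. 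So the bracket tends to $-\infty$, and since $t\ge 2$ throughout, the exponent does as well, giving $\bbE[Y]=o(1)$. (For what it is worth, the paper later switches to $t=\lceil(2\log n+\log\log n)/\log(1/q)\rceil$ in equation~(\ref{eqn:value-of-t}), for which the coarse $n^t$ bound does suffice because then $2\log n-t\log(1/q)\le-\log\log n<0$; the discrepancy with the $t$ in this lemma's statement looks like an oversight.)
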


\begin{proof}
Let $Y$ count the number of balanced independent
pairs of size~$t$.  Then
\[
\bbE[Y]=\binom{n}{t}^2 q^{t^2} < \exp(t(2\log n- t\log(1/q)))=o(1).\qedhere
\]
\end{proof}

With this lemma in mind, for the balance of this section, whenever the value
of $q$ is specified, we set:
\begin{equation}\label{eqn:value-of-t}
t=\lceil (2\log n+\log\log n)/\log(1/q)\rceil.
\end{equation}

Fix a value of $q$, with $t$ then determined 
by~(\ref{eqn:value-of-t}).
A \textit{short pair} is a pair $(\sigma,\sigma')$ where $\sigma$ is a linear order on
a $(t-1)$-element subset of $A$, and $\sigma'$ is a linear order on a $(t-1)$-element
subset of $A'$.  Let $d\in[n]$ and let
$\Sigma=\{(\sigma_j,\sigma'_j):1\le j\le d\}$ be a family of short
pairs.  With the family $\Sigma$ fixed,  we make the following definitions.

For a pair $(a,a')\in A\times A'$ and an integer $j\in[d]$, 
\begin{enumerate}
\item Event $R_j(a,a')$ holds if $a\in\sigma_j$ and 
$b\parallel_P a'$ for all $b\in\sigma_j$ with $h_j(b)<h_j(a)$.
\item Event $R'_j(a,a')$ holds if $a'\in\sigma'_j$ 
and $b'\parallel_P a$ for all $b'\in\sigma'_j$ with $h'_j(b')<h'_j(a')$.
\end{enumerate}
We note that $R_j(a,a')$ holds whenever $a$ is the highest element of $\sigma_j$.
Also, $R'_j(a,a')$ holds whenever $a'$ is the lowest element of $\sigma'_j$.

For a pair $(a,a')\in A\times A'$, let $(a<a')$ be the event that holds when
$a<_P a'$.  Also, let $(a\parallel a')$ be the event that holds when
$a\parallel_P a'$. Now set 
\[
R(a,a')=(a<a')\vee \Bigl(\vee_{j\in[d]}R_j(a,a')\Bigr) \vee\Bigl(\vee_{j\in[d]}R'_j(a,a')\Bigr).
\]
We say that $\Sigma$ \textit{realizes} the pair $(a,a')$ when
$R(a,a')$ holds  

In turn set 
\[
R(P)=\wedge_{(a,a')\in A\times A'} R(a,a').
\]
We say $\Sigma$ is a \textit{short realizer for $P$}
when $R(P)$ holds.  Then we define the \textit{short dimension of $P$}, denoted 
$\sdim(P)$, as the least positive integer $d$ such that there is a family 
$\Sigma=\{(\sigma_j,\sigma'_j): j\in[d]\}$ of short pairs such that
$\Sigma$ is a short realizer of $P$.

We observe that $a.a.s.$, $\sdim(P)\le\dim(P)$.
To see this, let $\cgF=\{L_1,\dots,L_d\}$ be a realizer of $P$.
Then for each $j\in[d]$, let $\sigma_j$ be the linear order consisting of the 
highest $t-1$ elements
of $A$ in $L_j$.  Also, let $\sigma'_j$ be the linear
order consisting of the lowest $t-1$ elements of $A'$ in $L_j$.
Since $a.a.s.$, the balanced independence number of $P$ is less than $t$, it follows that
$a.a.s.$, $\Sigma=\{(\sigma_j,\sigma'_j):j\in[d]\}$ is a short realizer for $P$.
Accordingly, a lower bound on $\sdim(P)$ is also a lower bound on $\dim(P)$.

The next step in the argument for all three lower bounds is to fix a short family $\Sigma=
\{(\sigma_j,\sigma'_j):1\le j\le d\}$ and consider the event
$R(P)$ that holds when $\Sigma$ is a short realizer for $P$.  We will determine
a reasonably accurate upper bound $p_0$ on $\bbP\bigl(R(P)\bigr)$.
The number of short families is less than $n^{2(t-1)d}$ and $d\le n$, so we
can say that $a.a.s.$, $\sdim(P)>d$ if $n^{2tn}p_0=e^{2tn\log n}p_0=o(1)$.

With $\Sigma$ fixed, let $T$ consist of those elements $a\in A$ such that
there is at least one $j\in[d]$ with $a$ the highest element of $A$ in $\sigma_j$.
Then set $M=A-T$.  Analogously, let $T'$ consist of those elements $a'\in A'$
such that there is at least one $j\in[d]$ with $a'$ the lowest element of $\sigma'_j$.
Then set $M'=A'-T'$.  Also, set $s=t-2$.

We note that $R(a,a')$ holds whenever $a\in T$ or $a'\in T'$.  Accordingly,
\[
R(P)=\wedge_{(x,x')\in M\times M'} R(x,x').
\]

Our next goal will be to determine a bound on $d$ that forces $\bbP(R(P))$ to
be exponentially small.  Some additional notation and terminology is required.
We describe this notation in full detail for $M$.  The notation for $M'$ is dual.

When $j\in[d]$, $x\in M$, and $x\in\sigma_j$, we let $h_j(x)$ count the number
of elements $y\in\sigma_j$ with $y$ higher than $x$ in $\sigma_j$.
By convention, we set $h_j(x)=\infty$ if $x\not\in\sigma_j$.
It is natural to view the quantity $h_j(x)$ as the \textit{height of
$x$} in $\sigma_j$. For an integer $i\in[s]$, we then let $\mu_i(x)$ count
the number of $j\in[d]$ with $h_j(x/W)=i$.  We view $\mu_i(x)$ as
the \textit{multiplicity of $x$ for height~$i$}.
Then define the quantity $w(x)$ by setting
\[
w(x)=\sum_{i=1}^s \mu_i(x) 2^{1-i}.
\]
We view the quantity $w(x)$ as the \textit{weight of $x$}.
Note that
\[
\sum_{x\in M} w(x) < 2d.
\]
Since $d\le n$, there is a subset $M_0\subset M$ with
$|M_0|=m/2$ such that $w(x)<4d/m$ for every $x\in M_0$.

The preceding discussion is followed in a dual manner to determine
a subset $M'_0\subset M'$ with $|M'_0|=m/2$ so that
$w(x')< 4d/m$ for every $x'\in M'_0$.  Set $\cgI=M_0\times M'_0$.
In the analysis to follow, we will need the following elementary
fact.  It is stated formally, as we will need it again in the
following section.

\begin{proposition}[Weight-Shift]\label{pro:weight-shift}
If $0\le q\le 1/2$ and $i\in N$, then $1-q^i<(1-q^{i+1})^2$.
\end{proposition}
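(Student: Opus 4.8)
The plan is to prove the inequality by a direct algebraic manipulation; no probabilistic or analytic input is needed. First I would move everything to one side and expand the square, so that the claim $1-q^i<(1-q^{i+1})^2$ becomes the assertion that the quantity
\[
(1-q^{i+1})^2-(1-q^i)=1-2q^{i+1}+q^{2i+2}-1+q^i=q^i-2q^{i+1}+q^{2i+2}
\]
is strictly positive.

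Next I would factor out $q^i$, using $2i+2=i+(i+2)$, to rewrite this difference as $q^i\bigl(1-2q+q^{i+2}\bigr)$. Since $i$ is a positive integer and $q>0$ throughout the ranges in which the proposition is applied (when $q=0$ both sides equal $1$, so the statement is understood with $0<q$), the factor $q^i$ is strictly positive. It therefore suffices to check that $1-2q+q^{i+2}>0$, and this is the only place the hypothesis $q\le 1/2$ enters: it gives $1-2q\ge 0$, and since $q^{i+2}>0$ we conclude $1-2q+q^{i+2}>0$. Multiplying by the positive quantity $q^i$ yields $(1-q^{i+1})^2-(1-q^i)>0$, which is exactly the claim.

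There is essentially no obstacle here: the only thing to watch is the bookkeeping of exponents in the factorization $q^{2i+2}=q^i\cdot q^{i+2}$, together with the observation that $q\le 1/2$ is used in the minimal possible way — to make $1-2q$ nonnegative rather than strictly positive — which matters precisely at the endpoint $q=1/2$, where $1-2q=0$ but the surviving term $q^{i+2}>0$ still forces the strict inequality.
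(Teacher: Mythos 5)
Your proof is correct; the paper itself does not include a proof of this proposition, stating it only as an ``elementary fact,'' and your expand-and-factor argument is the natural one. Your observation about the endpoint $q=0$ is also accurate: both sides then equal $1$, so the strict inequality as literally stated fails there (though every application in the paper has $q\ge n^{-1}\log^2 n>0$, so no harm is done). One small point of care you handled correctly is the exponent bookkeeping $q^{2i+2}=q^i\cdot q^{i+2}$, and the fact that the hypothesis $q\le 1/2$ is used exactly once, to make $1-2q\ge 0$, with strictness then rescued by the positive term $q^{i+2}$ at the endpoint $q=1/2$.
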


For an event $E$ in a probability space $\Omega$, we use the notation
$\overline{E}$ to denote the event that holds when $E$ fails. 
Let $(x,x')\in\cgI$.  We consider the events in the family
$\{\overline{R}_j(x,x'):j\in[d]\}$.  As explained in~\cite{ErKiTr91}, these
events are positively correlated, i.e., when $j$ and $k$ are
distinct integers in $[d]$,
\[
\bbP(\overline{R}_j(x,x')|\overline{R}_k(x,x'))\ge\bbP(\overline{R}_j(x,x'))).
\]

With the convention that $1-q^{\infty}=1$, it follows that:
\begin{align*}
\bbP(\wedge_{j\in[d]}\overline{R}_j(x,x')
     &\ge\prod_{j\in[d]} (1-q^{h_j(x)})&& \text{Using correlation.}\\
     &\ge\prod_{i\in[s]} (1-q^i)^{\mu_i(x)}&& \text{Definition of multiplicity.}\\
     &\ge(1-q)^{w(x)}&&\text{Using Proposition~\ref{pro:weight-shift}.}\\
     &\ge (1-q)^{4n/m}.
\end{align*}
Analogously, we have:
\[
\bbP\bigl(\wedge_{j\in[d]}\overline{R}'_j(x,x')\ge(1-q)^{4n/m}.
\]
It follows that
\begin{equation}\label{eqn:key-bound}
\bbP\bigl(R(x,x'))\le 1-q(1-q)^{8n/m}.
\end{equation}

\subsection{Applying the Janson Inequality}
 
We will use the Janson inequality in the proofs of New Lower Bounds~(2) and~(3).
Here is the set up for this result, following (essentially) the presentation 
in Chapter~8 of Alon and Spencer~\cite{AloSpe16}.  The text~\cite{JaLuRu00} by 
Janson, \L uczak and Rucinski is cited for the proof. 

Let $\cgI$ be a finite set and let $\{F_i:i\in\cgI\}$ be a finite
family of events in a probability space $\Omega$. When $i$ and $j$ are distinct
elements of $\cgI$, we write $F_i\sim F_j$ when $F_i$ and $F_j$ are dependent.
Also, we set 
\[
\Delta=\sum\{\bbP(F_i\wedge F_j):(i,j)\in\cgI\times\cgI, F_i\sim F_j\},
\]
and
\[
\mu=\sum_{i\in\cgI}\bbP(F_i).
\]

Here is the statement of the Janson inequality we will apply.

\begin{theorem}[Janson Inequality]\label{thm:Janson}
Let $\{F_i:i\in\cgI\}$ be a finite family of events with
$\bbP(F_i)\le 1/2$ for all $i\in \cgI$.
If $\Delta\le\mu$, then 
\[
\bbP\Bigl(\wedge_{i\in \cgI}\overline{F_i}\Bigr)\le  \exp(-\mu/2).
\]
\end{theorem}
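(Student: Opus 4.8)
This is the classical Janson inequality, and since the paper defers to~\cite{JaLuRu00} for the proof, the plan is simply to reproduce the standard argument --- the one via Harris's correlation inequality and a telescoping product, as in \cite[Ch.~8]{AloSpe16}. The structure to work in is the one sketched just above the statement: $\Omega$ is a finite product of independent trials, each $F_i$ is an increasing event determined by the outcomes of some set $S_i$ of trials, and the relation $F_i\sim F_j$ is to be read as $S_i\cap S_j\neq\emptyset$; this is the notion that actually makes the argument run, and in the intended application it coincides with the combinatorial overlap of the pairs $(x,x')$ indexing the events. The one external tool is \emph{Harris's inequality}: in such a product space an increasing event and a decreasing event are negatively correlated, and a conjunction of increasing (respectively decreasing) events is again increasing (respectively decreasing). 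I would state this as a lemma and either give its short proof by induction on the number of trials or cite it.

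The heart of the matter is a single per-index estimate. Fix an arbitrary enumeration $\cgI=\{1,\dots,m\}$ and write
\[
\bbP\Bigl(\wedge_{i\in\cgI}\overline{F_i}\Bigr)=\prod_{i=1}^{m}\bbP\bigl(\overline{F_i}\mid\wedge_{j<i}\overline{F_j}\bigr).
\]
For a fixed $i$, partition $\{1,\dots,i-1\}$ into $D_i=\{j<i:F_j\sim F_i\}$ and $N_i$ (the remaining indices, for which $S_j\cap S_i=\emptyset$); put $A=\wedge_{j\in N_i}\overline{F_j}$, a decreasing event depending only on trials disjoint from $S_i$, and $B=\wedge_{j\in D_i}\overline{F_j}$, also decreasing. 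The goal is the bound
\[
\bbP(F_i\mid A\wedge B)\ \ge\ \bbP(F_i)-\sum_{j\in D_i}\bbP(F_i\wedge F_j).
\]
To obtain it I would chain: $\bbP(F_i\mid A\wedge B)\ge\bbP(F_i\wedge B\mid A)$ since $\bbP(B\mid A)\le1$ (if $\bbP(A\wedge B)=0$ the whole product vanishes and the theorem is trivial); then $\bbP(F_i\wedge B\mid A)=\bbP(F_i\mid A)-\bbP\bigl(F_i\wedge\vee_{j\in D_i}F_j\mid A\bigr)\ge\bbP(F_i\mid A)-\sum_{j\in D_i}\bbP(F_i\wedge F_j\mid A)$; then $\bbP(F_i\mid A)=\bbP(F_i)$ because $A$ is determined by trials disjoint from $S_i$; and finally $\bbP(F_i\wedge F_j\mid A)\le\bbP(F_i\wedge F_j)$ by Harris, applied to the increasing event $F_i\wedge F_j$ against the decreasing event $A$.

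From the per-index bound, $\bbP\bigl(\overline{F_i}\mid\wedge_{j<i}\overline{F_j}\bigr)\le1-\bbP(F_i)+\sum_{j\in D_i}\bbP(F_i\wedge F_j)\le\exp\bigl(-\bbP(F_i)+\sum_{j<i,\,j\sim i}\bbP(F_i\wedge F_j)\bigr)$, and taking the product over $i$ telescopes the exponents into $-\mu+\tfrac12\Delta$ (the factor $\tfrac12$ because $\Delta$ runs over ordered pairs while $\sum_i\sum_{j<i}$ records each unordered pair once). Hence $\bbP(\wedge_i\overline{F_i})\le e^{-\mu+\Delta/2}$, and substituting the hypothesis $\Delta\le\mu$ yields $e^{-\mu/2}$. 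The hypothesis $\bbP(F_i)\le1/2$ is not actually needed for this chain of inequalities, but it is harmless and is the natural ``Poisson paradigm'' regime in which the bound is informative, so I would retain it to match the form in~\cite{JaLuRu00}. I expect the only genuinely delicate point --- the one worth writing out carefully rather than merely asserting --- to be the removal of the conditioning on the ``far'' down-set $A$: this must be done with a true correlation inequality (Harris), not with mere pairwise independence of the $F_j$, and it is precisely why $\sim$ has to be the shared-trial relation; verifying that this agrees with the ``dependence'' relation as actually used in the application is the bit of bookkeeping one should not skip.
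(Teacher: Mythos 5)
The paper does not prove this theorem --- it explicitly cites \cite{JaLuRu00} for the proof and only sets up the notation --- so there is no in-paper argument to compare your proposal against. Your proof is the standard one via Harris's correlation inequality and the telescoping conditional product, as in Chapter~8 of Alon--Spencer, and it is correct: the per-index lower bound $\bbP(F_i\mid A\wedge B)\ge\bbP(F_i)-\sum_{j\in D_i}\bbP(F_i\wedge F_j)$ follows exactly as you chain it (drop $\bbP(B\mid A)\le 1$, subtract the union bound over $D_i$, use independence of $F_i$ from $A$, and use Harris to remove the conditioning on the decreasing event $A$ from $\bbP(F_i\wedge F_j\mid A)$); the product over $i$ then gives $e^{-\mu+\Delta/2}$, the factor $1/2$ being precisely the conversion from the paper's ordered-pair $\Delta$ to the unordered count in $\sum_i\sum_{j<i,\,j\sim i}$, and the hypothesis $\Delta\le\mu$ yields $e^{-\mu/2}$. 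Your two side observations are also accurate: the hypothesis $\bbP(F_i)\le 1/2$ is never used in this chain (only $1+x\le e^x$ is needed), and the only substantive precision you add to the paper's informal definition ``$F_i\sim F_j$ when $F_i$ and $F_j$ are dependent'' is that the argument genuinely requires the shared-trial relation (increasing events over disjoint coordinate sets) so that $\bbP(F_i\mid A)=\bbP(F_i)$ and Harris apply; as you note, in the paper's application, where $F(x,x')\sim F(y,y')$ iff $|\{x,x'\}\cap\{y,y'\}|=1$, the two notions coincide, so the bookkeeping is harmless.
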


\begin{lemma}[New Lower Bounds~(2) and (3)] 
If $n^{-4/5} \le q\le \frac{1}{8}n^{-1/4}\log^3 n$, then $a.a.s.$,
\begin{equation*}
\dim(P)>
\begin{cases}
 n- 32 (n\log n/q)^{1/2}  &\text{if } n^{-4/5}\le q\le (16)^{1/3}n^{-1/3}\log^{1/3} n.\\
 n- 8qn     &\text{if }(16)^{1/3}n^{-1/3}\log^{1/3}n\le q \le \frac{1}{8}n^{-1/4}\log^3 n.
\end{cases}
\end{equation*}
\end{lemma}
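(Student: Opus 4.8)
The plan is to run the ``short realizer'' machinery developed above to its natural conclusion for the two indicated ranges of $q$, invoking the Janson inequality as the critical probabilistic input. First I would fix a short family $\Sigma=\{(\sigma_j,\sigma'_j):1\le j\le d\}$, pass to the subsets $M_0,M_0'$ of size $m/2$ (where $m=|M|\ge n-td$) with weight bounded by $4d/m$ on each, and set $\cgI=M_0\times M_0'$ exactly as in the setup. For each $(x,x')\in\cgI$ let $F_{(x,x')}=\overline{R(x,x')}$, the event that $\Sigma$ \emph{fails} to realize the pair $(x,x')$. Then $R(P)$ implies $\wedge_{(x,x')\in\cgI}\overline{F_{(x,x')}}$, so it suffices to show this conjunction has exponentially small probability. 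By the key bound~(\ref{eqn:key-bound}), $\bbP(F_{(x,x')})\ge q(1-q)^{8n/m}$, and since $q\le 1/2$, $\bbP(R(x,x'))$ is bounded below by a constant, so $\bbP(F_{(x,x')})\le 1/2$ and we get $\mu=\sum_{(x,x')\in\cgI}\bbP(F_{(x,x')})\ge |M_0||M_0'|\,q(1-q)^{8n/m}\ge (m^2/4)\,q(1-q)^{8n/m}$.

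Next I would estimate $\Delta=\sum\{\bbP(F_i\wedge F_j):F_i\sim F_j\}$. Two events $F_{(x,x')}$ and $F_{(y,y')}$ are dependent essentially only when they share a coordinate, i.e.\ $x=y$ or $x'=y'$ (the comparabilities governing $R(x,x')$ involve the pair $(x,x')$ itself together with pairs $\{b\}\times\{x'\}$ for $b\in\sigma_j$ below $x$, and dually $\{x\}\times\{b'\}$; a disjoint pair of indices in $\cgI$ touches disjoint sets of ground-set comparabilities). For such a dependent pair one crudely bounds $\bbP(F_i\wedge F_j)\le\bbP(F_i)$, and the number of $j$ dependent with a fixed $i$ is at most $m$ (same first coordinate) plus $m$ (same second coordinate), minus overlap; this gives $\Delta\le 2m\cdot\mu$ roughly. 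That is off by a factor, so I would instead argue more carefully that $\bbP(F_i\wedge F_j)$ for a pair sharing, say, the first coordinate $x$ is at most $\bbP(F_i)\cdot\bbP(F_j)/\bbP(\text{common part})$ or simply note $\Delta/\mu\le$ (max number of neighbors)$\times$(max conditional probability) and that the condition $\Delta\le\mu$ of Theorem~\ref{thm:Janson} can be forced by taking $d$ slightly larger than the naive target; alternatively one restricts $\cgI$ to a sub-grid (a ``sunflower-free'' or spread-out selection) on which dependencies are sparse. The cleanest route is: pick $M_0,M_0'$ and then thin to $M_1\subseteq M_0$, $M_1'\subseteq M_0'$ so that the pairs in $\cgI_1=M_1\times M_1'$ pairwise share no coordinate — impossible for a full grid, so instead I would partition $\cgI$ into $O(m)$ matchings, apply Janson to one matching of size $\Theta(m)$ on which all $F_i$ are independent, giving $\bbP(\wedge\overline{F_i})\le\prod(1-\bbP(F_i))\le\exp(-\Theta(m q(1-q)^{8n/m}))$ directly without even needing the $\Delta$ term. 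This independent-matching reduction is the technically delicate bookkeeping step.

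Then I would impose $e^{2tn\log n}\cdot p_0=o(1)$ where $p_0=\exp(-c\,m\,q(1-q)^{8n/m})$ is the resulting bound on $\bbP(R(P))$; since the number of short families is $\le n^{2tnd}\le n^{2tn\cdot n}$, taking logarithms this requires $c\,m\,q(1-q)^{8n/m}\gg tn\log n$. Recalling $t=\Theta(\log n/\log(1/q))$ and $m\ge n-td\ge n/2$ once $d$ is below the target, and using $(1-q)^{8n/m}\ge(1-q)^{16}\ge(1/2)^{16}$ or more sharply $e^{-16qn/m}$, the inequality reduces, after substituting the candidate $d$, to a one-line check. For the range $n^{-4/5}\le q\le (16)^{1/3}n^{-1/3}\log^{1/3}n$, where $qn/m$ is not bounded, one keeps $(1-q)^{8n/m}\approx e^{-8n q/m}$ and optimizes the choice $m\approx (n\log n/q)^{1/2}\cdot(\text{const})$, i.e.\ $n-d\approx 32(n\log n/q)^{1/2}$; for the range $(16)^{1/3}n^{-1/3}\log^{1/3}n\le q\le\frac18 n^{-1/4}\log^3 n$, where $qn/m$ would blow up unless $m=\Theta(n)$, one instead sets $n-d\approx 8qn$ so that $8n/m=O(1)$, keeping the exponential factor a constant and winning because $m q\gg tn\log n$. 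The crossover between the two subcases is exactly where $32(n\log n/q)^{1/2}=8qn$, i.e.\ $q=(16)^{1/3}n^{-1/3}\log^{1/3}n$, which matches the stated breakpoint. Finally, since $a.a.s.$\ $\sdim(P)\le\dim(P)$ (as shown above, using $\wttbin(P)<t$ from Lemma~\ref{lem:markov-bin}), the lower bound on $\sdim(P)$ transfers to $\dim(P)$, completing the proof.

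I expect the main obstacle to be the $\Delta$-control for the Janson inequality: a full product grid $\cgI=M_0\times M_0'$ has far too many dependent pairs for $\Delta\le\mu$ to hold with the target $d$, so the real work is either the decomposition of $\cgI$ into independent matchings (and checking each matching still has $\Theta(m)$ pairs with $\bbP(F_i)$ not too small) or a sharper second-moment-style estimate of $\bbP(F_i\wedge F_j)$ showing the shared-coordinate correlations are weak. Everything after that — substituting the two candidate values of $d$ and verifying the union-bound inequality — is routine calculus of the sort already carried out in Lemmas~\ref{lem:markov-bcn} and~\ref{lem:markov-bin}.
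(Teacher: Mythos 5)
Your setup and the key estimate $\bbP(F(x,x'))\ge q(1-q)^{8n/m}$ match the paper exactly, and you correctly identify that the crux is controlling $\Delta$ for a Janson application over $\cgI=M_0\times M'_0$. But the workaround you propose is where the argument breaks.

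The paper does \emph{not} thin $\cgI$ to an independent matching. It applies Janson to the full grid, using the observation that when $F(x,x')\sim F(y,y')$ (i.e.\ they share exactly one coordinate), both $x\parallel_P x'$ and $y\parallel_P y'$ must hold, and these two incomparability events are on disjoint pairs of ground-set elements, hence independent, so $\bbP\bigl(F(x,x')\wedge F(y,y')\bigr)\le q^2$. Keeping the whole grid is essential because it gives $\mu\ge (m/2)^2\cdot q(1-q)^{8n/m}\ge m^2q/16$, i.e.\ the exponent is \emph{quadratic} in $m$. The paper's final bound is $\bbP(R(P))\le\exp(-m^2q/32)$, and the union bound over short families (there are at most $n^{2tn}\le\exp(18n\log n)$ of them, since $t\le 9$ here) then requires $m^2q\gtrsim n\log n$, which, with $m=qnz$, is attainable.

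Your independent-matching reduction gives only $p_0\le\exp\bigl(-\Theta(mq(1-q)^{8n/m})\bigr)=\exp(-\Theta(mq))$, and this can never beat the union bound: $m\le n$ and $q\le 1/2$ force $mq\le n/2$, which is far smaller than the $\Theta(n\log n)$ in the exponent of the number of short families, for \emph{every} admissible choice of $m$. You also assert $m\ge n-td\ge n/2$, but in fact $m=n-d$ with $d$ close to $n$: the target values are $m\approx 32(n\log n/q)^{1/2}$ or $m\approx 8qn$, both $o(n)$ in the stated ranges. So even your subsequent ``one-line check'' could not be carried out as written. The linear-in-$m$ exponent is the genuine gap; the $m^2q$ scaling coming from a product-grid $\mu$ is the idea that cannot be dispensed with. (Your instinct that $\Delta\le\mu$ is delicate for the full grid is not unreasonable — the paper's tally of dependent ordered pairs, $2(m/2)(m/2)$, deserves a closer look since each $(x,x')$ has $\approx m$ dependent partners and $|\cgI|\approx m^2/4$ — but replacing the grid by a single matching discards the quadratic gain entirely and cannot be repaired by optimizing $m$.)
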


\begin{proof}
For this range, we note that $t\le 9$.
For every $(x,x')\in\cgI$, we have
an event $F(x,x')$ that holds when $\Sigma$ \emph{fails} to realize the pair
$(x,x')$.  Note that for $F(x,x')$ to hold, we need $x\parallel_P x'$, so
$\bbP(F(x,x'))\le q\le 1/2$.  We also observe that $\bbP(F(x,x'))\ge q(1-q)^{8n/m}$. 
Set $m=qnz$ where $z\ge 8$.  In general, we will have $z\rightarrow\infty$, but the restriction
$z\ge8$ is enough to imply that $(1-q)^{8n/m}\ge 1/4$.
It follows that $\mu$, the exected number of pairs that fail, is
at least $m^2q/16$.

When $x,y\in M_0$ and $x',y'\in M'_0$, we observe that
$F(x,x')\sim F(y,y')$ if and only if $|\{x,x'\}\cap\{y,y'\}|=1$.
Furthermore, when $F(x,x')\sim F(y,y')$, and event $F(x,x')\wedge F(y,y')$ holds,
we must have $x\parallel_P x'$ and $y\parallel_P y'$.  These two events are
independent and each has probability $q$.  It follows that
\[
\bbP\bigl(F(x,x')\wedge F(y,y')\bigr)\le q^2.
\]
There are are $2(m/2)(m/2)$ such pairs so $\Delta\le m^2q^2/2$.  To
apply the Janson inequality, we need $\Delta\le \mu$, but this simply requires
$q\le 1/8$.  We conclude that
\[
\bbP\Bigl(\bbR(\Sigma)\Bigr)\le\bbP\Bigl(\wedge_{(x,x')\in\cgI}\overline{F(x,x')}\Bigr)
                           \le \exp(-\mu/2)\le \exp(-m^2q/32).
\]

Recall that the number of short families is less than $n^{2tn}\le \exp(18n\log n)$,
since $t\le 9$.  Noting that $18\cdot 32= 576<2^{10}$,
can conclude that $a.a.s.$, $\dim(P) > d$ if
$2^{10}n\log n\le m^2 q$.  Since $m=qnz$, this
becomes $2^{10}\log n\le q^3 n z^2$.  This requires
\begin{equation}\label{eqn:z-equation}
z \ge 32\bigl(\log n/(q^3 n)\bigr)^{1/2}. 
\end{equation}
When $z=8$, this inequality holds when $q\ge (16)^{1/3}n^{-1/3}\log^{1/3}n$, and this
completes the proof of New Lower Bound~(3).

Now we assume that $q\le (16)^{1/3}n^{-1/3}\log^{1/3} n$.  Now we treat 
inequality~(\ref{eqn:z-equation}) as an equation, i.e.,
we set $z=32\bigl(\log n/(q^3n)\bigr)^{1/2}$.  The equation $m=qnz$ is equivalent to
$m=32(n\log n/q)^{1/2}$, and with this observation,
the proof of New Upper Bound~(2) is complete.
\end{proof}

\subsection{A Family of Independent Events}

To obtain a proof of New Lower Bound~(4), we simply update the original argument 
in~\cite{ErKiTr91} as given on pages 262--268.  We have elected not to repeat the 
details of this argument.  Instead, we will provide only an outline of the steps 
to be taken, with notational changes made to agree with our treatment here

\begin{enumerate}
\item We identify a subset $\cgJ$ of $M\times M'$ with $|\cgJ|=m^3/(72ns^2)$.
\item For each $(x,x')\in\cgI$, we determine an event $E(x,x')$ such that
$R(x,x')\subseteq E(x,x')$ and $\Pr(E(x,x'))\ge q(1-q)^{24n/m}$.
\item Events in the family $\{E(x,x'):(x,x')\in\cgJ\}$ are independent.
\end{enumerate}

Since the events in $\cgJ$ are independent, it follows that
\begin{equation}\label{eqn:p_0-ub}
\bbP(R(P)) \le\Bigl[1-q(1-q)^{24n/m}\Bigr]^{\frac{m^3}{72ns^2}}.
\end{equation}

Inequality~(\ref{eqn:p_0-ub}) provides an upper bound on $p_0$, the maximum
value of $\bbP(R(P))$.  To show that $a.a.s.$, $\dim(P)>d=n-m$, it
suffices to require that:
\[
e^{2tn\log n}  \bigl[1-q(1-q)^{24n/m}\bigr]^{m^3/72ns^2}=o(1)
\]
In the range we consider, it will always be the case that $q(1-q)^{24n/m}=o(1)$.
With this restriction, the preceding inequality holds if:

\begin{equation}\label{eqn:master}
n^2t^3\log n=o\Bigl(q(1-q)^{24n/m}m^3\Bigr).
\end{equation}

In the proof of the next lemma, we will refer to~(\ref{eqn:master})
as the ``master inequality.''

\begin{lemma}[New Lower Bound (4)]
If $\frac{1}{8}n^{-1/4}\log^3 n\le q\le 1/2$ and $z=\log n + 4\log q - 8\log\log n$,  then
$a.a.s.$, $\dim(P)\ge n-24qn/z$.
\end{lemma}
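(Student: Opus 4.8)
The plan is to run the outlined machinery of this subsection — the set $\cgJ$ with $|\cgJ|=m^3/(72ns^2)$, the independent events $E(x,x')$ with $\Pr(E(x,x'))\ge q(1-q)^{24n/m}$, the bound~(\ref{eqn:p_0-ub}), and the reduction of ``$a.a.s.$ $\dim(P)>d=n-m$'' to the master inequality~(\ref{eqn:master}) — at the target value $m=24qn/z$, so that $d=n-m$ and the exponent in~(\ref{eqn:p_0-ub}) becomes $24n/m=z/q$. After substituting $m=24qn/z$ and cancelling, the master inequality~(\ref{eqn:master}) reads
\[
t^3z^3\log n=o\bigl(q^4n(1-q)^{z/q}\bigr),
\]
and together with the standing hypothesis $q(1-q)^{z/q}=o(1)$ this is all that has to be verified for $\frac18 n^{-1/4}\log^3 n\le q\le\tfrac12$.

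First I would clear the routine ground. Since $q\le1/2$, $\log(1/q)\ge\log 2$, so $t=O(\log n)$ by~(\ref{eqn:value-of-t}); since $\log q<0<\log\log n$, $z=\log n+4\log q-8\log\log n<\log n$; and since $q\ge\frac18 n^{-1/4}\log^3 n$, $z\ge 4\log\log n-4\log 8$, so $z\to\infty$. Hence $z>0$, $m=24qn/z$ is well defined with $m\to\infty$ (so the integrality convention applies) and $m=o(n)$, and $t^3z^3\log n=O(\log^7 n)$. The standing hypothesis follows at once from $(1-q)^{z/q}=\bigl[(1-q)^{1/q}\bigr]^{z}\le e^{-z}\to0$. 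So everything comes down to a lower bound on $q^4n(1-q)^{z/q}$.

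This is the substantive step, and the one where the precise coefficients in $z=\log n+4\log q-8\log\log n$ and the lower cutoff on $q$ are decisive. I would write $(1-q)^{z/q}=\lambda(q)^{z}$ with $\lambda(q)=(1-q)^{1/q}$ (which decreases on $(0,\tfrac12]$ into $[1/4,1/e)$), so that $(1-q)^{z/q}=n^{-c(q)}$ with $c(q)=z\log(1/\lambda(q))/\log n$ and $q^4n(1-q)^{z/q}=q^4n^{1-c(q)}$. Since $\log\bigl(q^4n^{1-c(q)}\bigr)=4\log q+(1-c(q))\log n$ is (one checks) concave in $q$, it is enough to control the two endpoints of the range. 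When $q$ is at the lower cutoff, $z$ is only of order $\log\log n$, so $c(q)$ is small, $(1-q)^{z/q}$ is a negative power of $\log n$, and $q^4n(1-q)^{z/q}$ is a polylogarithm of $n$; the cutoff $q\ge\frac18 n^{-1/4}\log^3 n$ is chosen so that this polylog still outgrows $t^3z^3\log n=O(\log^7 n)$. When $q=\tfrac12$, $z$ is of order $\log n$ and $(1-q)^{z/q}=4^{-z}$ is a genuine negative power of $n$, so here one must compute $c(\tfrac12)$ exactly and check that the surviving power of $n$ still beats $\log^7 n$ — this is the delicate case. Granting the two endpoint estimates, $t^3z^3\log n=o\bigl(q^4n(1-q)^{z/q}\bigr)$, which is the master inequality, and the outlined reduction then yields $a.a.s.$ $\dim(P)>d=n-24qn/z$.

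The main obstacle, as opposed to the bookkeeping, is exactly this estimate of $q^4n(1-q)^{z/q}$: one has to compute $c(q)=z\log(1/\lambda(q))/\log n$ sharply enough to control $n^{1-c(q)}$ and confirm the concavity of $4\log q+(1-c(q))\log n$, and then check that the calibration of $z=\log n+4\log q-8\log\log n$ against the cutoff $q\ge\frac18 n^{-1/4}\log^3 n$ keeps the tighter of the two endpoints on the right side of the inequality. Everything else is routine given the paper's conventions.
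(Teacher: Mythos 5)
Your proposal follows exactly the same route as the paper's proof — same target $m=24qn/z$, same reduction to the master inequality~(\ref{eqn:master}), same routine bounds $t\le 3\log n$, $z<\log n$, $z\ge3\log\log n$. Where the paper simply writes ``we can safely approximate $(1-q)^{24n/m}$ as $e^{-z}$'' and concludes $t^3z^3\log n=O(\log^7 n)=o(\log^8 n)$, you correctly flag that this approximation is the crux of the matter and defer the verification to the endpoint $q=1/2$. That instinct is right, but the deferred check fails: with $\lambda(1/2)=1/4$ you get $c(1/2)=(z/\log n)\log 4\to\log 4\approx 1.386$, so
\[
q^4n(1-q)^{z/q}=\tfrac{1}{16}\,n^{1-c(1/2)}=n^{1-\log 4+o(1)}\longrightarrow 0,
\]
while $t^3z^3\log n\asymp\log^7 n$. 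The master inequality $t^3z^3\log n=o\bigl(q^4n(1-q)^{z/q}\bigr)$ is therefore false at $q=1/2$ (and on a neighborhood of it), so ``check that the surviving power of $n$ still beats $\log^7 n$'' returns a negative answer and the proof does not close.

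The same defect is present in the paper's argument: replacing $(1-q)^{z/q}$ by $e^{-z}$ changes the quantity by the factor $e^{-(\log 4-1)z}=n^{-(\log 4-1)+o(1)}$ at $q=1/2$, a polynomial error in the unfavorable direction, not a harmless approximation. The calibration $m=24qn/z$ is tuned to the asymptotic $\lambda(q)\approx e^{-1}$, which only holds as $q\to 0$; for $q$ bounded away from $0$, the exponent $24n/m=z/q$ is too large for the base $1-q$. A repaired statement would need $m$ proportional to $n\log(1/p)/z$ rather than $qn/z$ (the two agree up to $1+o(1)$ when $q\to 0$ but differ by the factor $\log 4$ at $q=1/2$), or a narrower upper range for $q$. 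So your proposal shares a genuine gap with the paper's own proof, located precisely at the step you correctly identified as delicate but left unverified.
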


\begin{proof}
Set $m=24qn/z$.  The lower bound on $q$ implies
$qn^4\ge \log^{11} n$.  It follows that $z\ge 3\log\log n$ so that $m=o(qn)$.
With this value of $m$, we can safely approximate $(1-q)^{24n/m}$ as $e^{-z}$.
Accordingly, the master inequality
becomes $n^2t^3\log n=o(qe^{-z}q^3n^3/z^3)$,  which is equivalent to $t^3z^3e^z\log n=o(q^4n)$.
We note that $e^z= qn^4/\log^8 n$.  So the master inequality holds if
$t^3z^3\log n=o(\log^8 n)$.  However, $t\le  3\log n$.  Furthermore,
$z <\log n$.  It follows that $t^3z^3\log n=O(\log^7 n)$, so that
the master inequality holds.
\end{proof}

We observe that there is a threshold occurring when $q\sim n^{-1/4}$. When $q$ is
below this threshold, the Janson inequality approach gives a better result, and
when $q$ is above this threshold, the original approach using a family of
independent events is better.

\section{Generalized Latin Rectangles and the Euler Product Function}

In this section, we prove New Upper Bounds~(2) and~(3).
The arguments require a ``one-sided'' reformulation of
dimension, using the same approach (at least one half of it) taken
in the last section.  Let $P\in\bbB$ and let 
$\cgF=\{\sigma_1,\dots,\sigma_d\}$ be a non-empty family of linear orders 
such that for each $j\in[d]$, $\sigma_j$ is a linear order on a non-empty 
subset of $A$.  Now there is no restriction on the size of these linear
orders.  For a pair $(a,a')\in A\times A'$, we say that $\cgF$ \textit{realizes}
$(a,a')$ if either (1)~$a<_P a'$ or (2)~there
is some $j\in[d]$ with $a\in\sigma_j$ and $b\parallel_P a'$ for
all $b'\in\sigma_j$ with $h_j(b)<h_j(a)$.  In turn, we say $\cgF$ is a 
\textit{one-sided realizer} for $P$ when $\cgF$ realizes 
$(a,a')$ for all pairs $(a,a')\in A\times A'$.

Clearly, $\dim(P)$ is the least $d\ge1$ for which $P$ has a one-sided realizer
of size~$d$. Our strategy for proving New Upper Bounds~(1) and~(2)
will be to design a single candidate family $\cgF=\{\sigma_1,\dots,\sigma_d\}$ and
show that $a.a.s.$, this family is a one-sided realizer of a poset $P$.
To implement this strategy, we must pause to establish a connection with a classic 
concept in combinatorics.
  
\subsection{Generalized Latin Rectangles}

Recall that when $m$ and $s$ are integers with $1\le s\le m$,
an $s\times m$ array (matrix) $R$ is called a \textit{latin rectangle}
when (1)~each row of $R$ is a permutation of the integers in
$[m]$, and (2)~the entries in each column of $R$ are
distinct.  As is well known, if $2\le s\le m$, an $(s-1)\times m$ 
latin rectangle $R$ can always be extended to an $s\times m$ latin 
rectangle by adding a new row.

Now let $(m,r,s)$ be a triple of positive integers.  An $s\times (rm)$
array $R$ of integers from $[m]$ will be called an  $(m,r,s)-\GLR$ (where
$\GLR$ is an abbreviation for \textit{generalized latin rectangle})
when the following conditions are met:

\begin{enumerate}
\item In each row of $R$, each integer in $[m]$ occurs exactly
$r$ times.
\item In each column $C$ of $R$, the $s$ integers occuring
in column $C$ are distinct.
\item For each distinct pair $i,j\in\{1,2,\dots,m\}$, there is at
most one column $C$ in $R$ for which $i$ is below $j$ in column~$C$.
\end{enumerate}

Note that when $r=1$, the third requirement is not part of the
traditional definition for a latin rectangle.  However, it will be soon
be clear why we want this additional restriction in place.

Here is an example of a $(9,2,3)$-GLR.

\begin{equation*}
\setcounter{MaxMatrixCols}{20}
\begin{bmatrix}
1 & 1 & 2 & 2 & 3 & 3 & 4 & 4 & 5 & 5 & 6 & 6 & 7 & 7 & 8 & 8 & 9 & 9\\
8 & 9 & 9 & 1 & 1 & 2 & 2 & 3 & 3 & 4 & 4 & 5 & 5 & 6 & 6 & 7 & 7 & 8\\
3 & 6 & 4 & 7 & 5 & 8 & 6 & 9 & 7 & 1 & 8 & 2 & 9 & 3 & 1 & 4 & 2 & 5
\end{bmatrix}
\end{equation*}
The reader may note that it is impossible to extend this array to
a $(9,2,4)-\GLR$.  More generally, we have the following natural extremal
problem: For a pair $(m,r)$ of positive integers,  find the the largest
integer~$s=f(m,r)$ for which there is an $(m,r,s)-\GLR$.  Trivially,
$f(m,r)\ge1$.   

\begin{lemma}\label{lem:GLR-1}
Let $m,r,s$ be positive integers with $s\ge2$.  If $f(m,r)\ge s$, 
then $rs(s-1)\le 2(m-1)$.
\end{lemma}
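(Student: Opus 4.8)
The plan is to count, in two ways, the ordered pairs $(i,j)$ with $i\neq j$ that appear (in that vertical order) within a single column of an $(m,r,s)$-$\GLR$ $R$. Condition~(3) says each such ordered pair occurs in at most one column, so the number of column-pairs is at most the number of ordered pairs from $[m]$, which is $m(m-1)$. On the other hand, I would count the same quantity by looking at each column: a column has $s$ distinct entries by condition~(2), and within a fixed column the entries are linearly ordered top-to-bottom, so it contributes exactly $\binom{s}{2}$ ordered ``below'' pairs (if $i$ is below $j$ we get the pair $(i,j)$, so really $\binom{s}{2}$ unordered pairs, each contributing one ordered pair in whichever direction it happens to lie). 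The number of columns is $rm$, since the array has $rm$ columns. Hence $rm\binom{s}{2}\le m(m-1)$, i.e. $rm\cdot s(s-1)/2\le m(m-1)$, which after cancelling $m$ gives exactly $rs(s-1)\le 2(m-1)$.

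Let me restate it cleanly. Define $\mathcal{P}$ to be the set of pairs $(C,\{i,j\})$ where $C$ is a column of $R$ and $\{i,j\}$ is a two-element subset of the entries appearing in $C$. Each column has $s$ distinct entries, so each column accounts for $\binom{s}{2}$ elements of $\mathcal{P}$, and since $R$ has $rm$ columns, $|\mathcal{P}|=rm\binom{s}{2}$. Now I would define a map sending $(C,\{i,j\})\in\mathcal{P}$ to the ordered pair $(a,b)$ of elements of $[m]$ with $a$ below $b$ in $C$; this is well-defined since entries in a column are distinct and hence linearly ordered. Condition~(3) is precisely the statement that this map is injective: no ordered pair $(a,b)$ arises from two different columns (and within one column a two-element subset determines its ordered version). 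So $|\mathcal{P}|\le$ the number of ordered pairs of distinct elements of $[m]$, which is $m(m-1)$. Combining, $rm\binom{s}{2}\le m(m-1)$, and dividing by $m$ and multiplying by $2$ yields the claim.

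There is essentially no obstacle here — the only thing to be careful about is making sure the injectivity argument correctly uses condition~(3) in its ordered form, and that we are counting columns correctly ($rm$ of them, since $R$ is $s\times(rm)$). The hypothesis $s\ge 2$ is what makes $\binom{s}{2}\ge 1$ so that the counting is non-vacuous, though the inequality $rs(s-1)\le 2(m-1)$ is trivially true when $s\le 1$ anyway. I would present this as a short two-sentence double-counting proof rather than belaboring the bijection formalism.
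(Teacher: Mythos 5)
Your double-counting argument is exactly the paper's proof: count the $rm$ columns, each contributing $\binom{s}{2}$ ``below'' pairs by condition~(2), and bound the total by $m(m-1)$ using condition~(3). Your write-up simply makes the injectivity step more explicit than the paper does.
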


\begin{proof}
Suppose that $R$ is an $(m,r,s)-\GLR$.
There are $rm$ columns in $R$ and for each column $C$ in $R$,
there are $s(s-1)/2$ ordered
pairs $(i,j)$ where $i$ is below $j$ in column~$C$. The
last two conditions in the definition of an $(m,r,s)$-GLR force
$rms(s-1)/2\le m(m-1)$, so that $rs(s-1)\le 2(m-1)$.
\end{proof}

Lower bounds on this extremal problem are more challenging, but
we will give an explicit construction which is sufficient for
our purposes.  If $R$ is an $(m,r,s)-\GLR$, we say $R$ is \textit{resolvable}
if it consists of $r$ latin rectangles placed side by side, i.e.,
each row partitions into $r$ blocks of consecutive elements
and each block is a permutation of $[m]$.
The following lemma is an elementary extension of the classical
result for latin rectangles, and we only outline the proof, leaving
the details as an exercise for students.

\begin{lemma}\label{lem:GLR-2}
Let $m,r$ and $s$ be positive integers with $s\ge2$.  If 
$m> 2rs^3$, then there is an $(m,r,s)-\GLR$.
\end{lemma}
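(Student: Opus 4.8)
The plan is to produce a \emph{resolvable} $(m,r,s)-\GLR$ via a cyclic construction, choosing its ingredients greedily. I would work in the group $\mathbb{Z}_m$, identified with $[m]$. Given an ordered tuple $S=(c_1,\dots,c_s)$ of distinct elements of $\mathbb{Z}_m$, let the \emph{block} $B(S)$ be the $s\times m$ array whose entry in row~$i$, column $x\in\mathbb{Z}_m$, is $c_i+x\pmod m$; then pick $r$ such tuples $S_1,\dots,S_r$, write $S_\ell=(c^{(\ell)}_1,\dots,c^{(\ell)}_s)$, and set $R=[\,B(S_1)\mid\dots\mid B(S_r)\,]$, an $s\times(rm)$ array. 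Each row of each block runs bijectively over $\mathbb{Z}_m$, so in $R$ every symbol occurs exactly $r$ times per row (condition~(1)); the entries $c^{(\ell)}_1+x,\dots,c^{(\ell)}_s+x$ of a column are distinct because the $c^{(\ell)}_i$ are (condition~(2)). So the whole task reduces to choosing the tuples so that condition~(3) holds.

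The point is that condition~(3) translates into a statement about differences. In block~$\ell$, symbol $p$ lies below symbol $q$ in column~$x$ exactly when $p=c^{(\ell)}_a+x$ and $q=c^{(\ell)}_b+x$ for some row indices $a>b$; then $p-q=c^{(\ell)}_a-c^{(\ell)}_b$, and conversely, given $p$, any triple $(\ell,a,b)$ with $a>b$ and $c^{(\ell)}_a-c^{(\ell)}_b=p-q$ recovers the column ($x=p-c^{(\ell)}_a$) and both rows. Hence the number of columns of $R$ in which $p$ lies below $q$ is exactly the number of such triples, and $R$ satisfies condition~(3) if and only if the descending differences $c^{(\ell)}_a-c^{(\ell)}_b$, taken over all $\ell\in[r]$ and all $1\le b<a\le s$, are pairwise distinct (they are automatically nonzero once the shifts within each tuple are distinct). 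This is precisely why the third axiom, absent from the classical notion of a latin rectangle, was imposed on a $\GLR$.

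It then remains to build the tuples greedily. Process $\ell=1,\dots,r$, and within block~$\ell$ choose $c^{(\ell)}_1,c^{(\ell)}_2,\dots,c^{(\ell)}_s$ one element at a time, maintaining the set $F\subseteq\mathbb{Z}_m$ of all descending differences produced so far, initialized to $F=\{0\}$ (which also enforces distinctness of shifts). When $c^{(\ell)}_1,\dots,c^{(\ell)}_{k-1}$ are already chosen, adding $c^{(\ell)}_k$ creates exactly the new differences $c^{(\ell)}_k-c^{(\ell)}_b$ for $b<k$; these are automatically distinct from one another, so the only requirement is $c^{(\ell)}_k-c^{(\ell)}_b\notin F$ for every $b<k$, i.e.\ $c^{(\ell)}_k\notin\bigcup_{b<k}(c^{(\ell)}_b+F)$. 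This forbids at most $(k-1)\,|F|\le(s-1)\,|F|$ values, and since $|F|\le 1+r\binom{s}{2}<1+rs^2/2$ at every stage, the number of forbidden values is less than $(s-1)(1+rs^2/2)<s+rs^3/2\le rs^3<2rs^3<m$ (using $s\ge2$ for $s\le rs^3/2$). So a legal choice in $\mathbb{Z}_m$ always exists, and iterating yields the required $(m,r,s)-\GLR$.

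I do not expect a genuine obstacle here; consistent with the authors leaving it as an exercise, the construction is routine. The only place that needs care is the bookkeeping behind condition~(3): keeping the orientation ``below'' straight, and checking that a triple $(\ell,a,b)$ determines a unique column, so that condition~(3) really is equivalent to the descending differences being all distinct. Everything past that is elementary counting.
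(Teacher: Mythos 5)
Your proof is correct, and it takes a genuinely different route from the one sketched in the paper. The paper's outline is inductive: starting from a resolvable $(m,r,s-1)$-GLR, it adds row $s$ block by block via a König/Hall matching argument, bounding the number of forbidden symbols in each cell of the last block to show the relevant bipartite graph has minimum degree at least $m/2$ and hence a perfect matching. Your construction is instead direct and algebraic: each block is a cyclic latin square $B(S_\ell)$ over $\mathbb{Z}_m$, which trivially handles conditions~(1) and~(2), and you observe — correctly, and this is the real content — that condition~(3) for a concatenation of cyclic blocks is \emph{equivalent} to all descending differences $c^{(\ell)}_a - c^{(\ell)}_b$ ($\ell\in[r]$, $b<a$) being distinct in $\mathbb{Z}_m$. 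This collapses condition~(3), which a priori concerns $\Theta(m^2)$ ordered pairs, to a statement about only $r\binom{s}{2}$ differences, and a one-pass greedy choice of the shifts then suffices since at each step the forbidden set has size less than $(s-1)\bigl(1+r\binom{s}{2}\bigr) < 2rs^3 < m$. Your argument is if anything tighter and more self-contained than the paper's sketch (no appeal to Hall's theorem, no separate base case, and the difference-set reformulation makes the role of condition~(3) transparent); the paper's approach has the virtue of paralleling the classical latin-rectangle extension theorem and would generalize more readily if one wanted to extend a \emph{given} partial GLR rather than build one from scratch.
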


\begin{proof}
Consider the problem of adding a last row to a resolvable
$(m,r,s-1)-\GLR$.  Proceeding block by block, we 
have a balanced $(m,m)$ bipartite graph $G$ with positions
$1$ through $m$ on one side of $G$ and sets of allowable
choices for each of the $m$ positions on the other side.  Clearly, 
the most challenging case in completing the last row is the last
block.

Consider one of the $m$ columns in the last block, and let
$x$ be one of the $s-1$ integers that already occurs in this column.
Then $x$ is over $(r-1)s(s-1)/2$ other integers in the first $r-1$
blocks, and $x$ is over $(s-1)(s-2)/2$ other integers in the last
block.  When $x$ is the lowest element in the column, then $x$
itself is not allowable.  It follows that the number of allowable
choices is:

\begin{equation}\label{eqn:cell-size}
m -(r-1)s(s-1)^2/2 -(s-1)^2(s-2)/2 -1
\end{equation}
Note that the inequality $2rs^3<m$ implies that the quantity in
inequality~(\ref{eqn:cell-size})
is at least $m/2$.   A parallel argument shows that
each of the integers in $[m]$ belongs to at least $m/2$ of the
sets of allowable choices.  It is an
immediate consequence of Hall's theorem that
a balanced bipartite graph $G$ with $2m$ vertices
and minimum degree $\delta(G)\ge m/2$ has a complete matching.
Futhermore, a matching in $G$ provides a legal way to complete the
last row.
\end{proof}

Let $m,r,s$ be integers, and let $R$ be an $(m,r,s)-\GLR$.  We set $d=mr$ and
$n=d+m$.  We fix an arbitrary $d$-element subset $T=\{a_1,\dots,a_d\}$ of $A$
and set $M=A-T=\{x_1,\dots,x_m\}$.   
We construct a family $\cgF=\{\sigma_1,\dots,\sigma_d\}$ of linear orders on
$(s+1)$-element subsets of $A$ as follows.
For each $j\in[d]$, we set $h_j(a_j)=0$. Then for each pair $(i,j)\in[s]\times[d]$,
we set $h_j(x_\alpha)=i$ when the integer in row $i$ and column $j$ of $R$
is $\alpha$.  

When $(a,a')\in A\times A'$, it is obvious that $\cgF$ realizes $(a,a')$ if
$a\in T$.  Let $X$ be the random variable counting the number of
pairs $(a,a')\in A\times A'$ for which $\cgF$ fails to realize $(a,a')$.
Since $a$ must belong to $M$, the expected value of $X$ is given by:
\begin{equation}\label{eqn:ev-partial-phi}
E[X]=nmq\bigl[\prod_{i=1}^s(1-q^i)\bigr]^r.
\end{equation}

Note that the expression $\prod_{i=1}^s(1-q^i)$ is a partial product
of the Euler product function $\phi(q)=\prod_{i=1}^{\infty}(1-q^i)$. 
Using the weight-shift propososition~\ref{pro:weight-shift} from the
preceding section, it follows that
\[
(1-q^s)\prod_{i=1}^s(1-q^i) <\phi(q) < \prod_{i=1}^s(1-q^i).
\]

\begin{lemma}[New Upper Bound (3)]
If $n^{-1/3}\le q\le 1/2$ and $z=n^2q\log(1/\phi(q))$, then $a.a.s.$,
$\dim(P)<n-n\log(1/\phi(q))/\log z$.
\end{lemma}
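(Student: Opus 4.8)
The plan is to produce, for a suitable triple $(m,r,s)$ of positive integers, a single candidate one-sided realizer $\cgF=\{\sigma_1,\dots,\sigma_d\}$ with $d=mr$, obtained from an $(m,r,s)$-$\GLR$ exactly as in the construction above, and to show that $a.a.s.$ this \emph{fixed} $\cgF$ realizes every pair of $A\times A'$. Since then $a.a.s.$ $\dim(P)\le d=mr=n-m$, it is enough to arrange $m\ge n\log(1/\phi(q))/\log z$, and this requirement essentially pins down the parameters.

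I would set $r+1=\lfloor\log z/\log(1/\phi(q))\rfloor$, $m=n/(r+1)$, $d=mr=n-m$ (treating these as integers and repairing the negligible rounding, per the paper's convention). Throughout, two elementary facts about the range $n^{-1/3}\le q\le1/2$ are used: $\log(1/\phi(q))\ge-\log(1-q)\ge q$, and $\phi$ is non-increasing, so $\phi(q)\ge\phi(1/2)>1/4$. Hence $z=n^2q\log(1/\phi(q))\ge n^2q^2\ge n^{4/3}$, so $\log z\to\infty$, $r\to\infty$, and $\log z\le 2\log n$. Next choose an integer $s=\Theta(\log n)$ large enough that $(1-q^s)^r\ge 1/2$ (it suffices that $q^s\le(\log 2)/(2r)$, and since $q\le 1/2$ and $r\le n$, some $s\le C\log n$ works). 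Since $\log(1/\phi(q))\ge q\ge n^{-1/3}$ one gets $r+1\le 2n^{1/3}\log n$, hence $m\ge n^{2/3}/(2\log n)$, while $2rs^3=O(n^{1/3}\log^4 n)$; so $m>2rs^3$ for large $n$ and Lemma~\ref{lem:GLR-2} supplies an $(m,r,s)$-$\GLR$ $R$, from which $\cgF$ is built as described.

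The heart of the argument is a first-moment estimate. Let $X$ count the pairs $(a,a')\in A\times A'$ that $\cgF$ fails to realize. By~(\ref{eqn:ev-partial-phi}) together with the weight-shift bound $(1-q^s)\prod_{i=1}^s(1-q^i)<\phi(q)$ (Proposition~\ref{pro:weight-shift}),
\begin{equation*}
\bbE[X]=nmq\Bigl[\prod_{i=1}^{s}(1-q^i)\Bigr]^{r}
       <nmq\Bigl(\frac{\phi(q)}{1-q^s}\Bigr)^{r}\le 2nmq\,\phi(q)^{r}.
\end{equation*}
Because $r\ge\log z/\log(1/\phi(q))-2$, we have $\phi(q)^{r}=e^{-r\log(1/\phi(q))}\le e^{-\log z+2\log(1/\phi(q))}=\phi(q)^{-2}/z$; and because $r+1\ge\tfrac12\log z/\log(1/\phi(q))$, we have $m\le 2n\log(1/\phi(q))/\log z$. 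Substituting and using $z=n^2q\log(1/\phi(q))$,
\begin{equation*}
\bbE[X]\le\frac{4n^{2}q\log(1/\phi(q))}{z\,\phi(q)^{2}\log z}
        =\frac{4}{\phi(q)^{2}\log z}\le\frac{64}{\log z}=o(1),
\end{equation*}
since $\phi(q)>1/4$ and $\log z\to\infty$. By Markov's inequality $a.a.s.$ $X=0$, so $a.a.s.$ $\cgF$ is a one-sided realizer of $P$ and $\dim(P)\le d=n-m\le n-n\log(1/\phi(q))/\log z$ (for strictness, take $r+1$ one smaller in the single exceptional case that $\log z/\log(1/\phi(q))$ is an integer, which only improves the other estimates).

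I expect the main obstacle to be the parameter bookkeeping: $r$ must be large enough that $\bbE[X]=o(1)$, $r+1$ small enough that $m=n/(r+1)$ reaches the target, and $m$ still large enough that a $\GLR$ exists. These are simultaneously achievable precisely because $\log z=\Theta(\log n)$ is large while $\log(1/\phi(q))\ge q\ge n^{-1/3}$ is not too small, so that $r=O(n^{1/3}\log n)$ and $m=\Omega(n^{2/3}/\log n)$ comfortably dominate $s^3=O(\log^3 n)$; the parameter $z$ in the statement is just the device that absorbs the lower-order term $\log(nmq)$ appearing in the exponent of $\phi(q)$. A secondary point needing care is that formula~(\ref{eqn:ev-partial-phi}) relies on the events ``column $j$ fails to certify $(x,a')$'' being independent over the columns containing a fixed $x$, which is exactly what condition~(3) in the definition of a $\GLR$ guarantees (each other element lies above $x$ in at most one column).
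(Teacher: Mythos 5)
Your proposal is correct and follows essentially the same approach as the paper: build a single candidate one-sided realizer from an $(m,r,s)$-$\GLR$, compute $\bbE[X]$ via~(\ref{eqn:ev-partial-phi}) and the weight-shift bound, and choose $m\approx n\log(1/\phi(q))/\log z$ so that $\bbE[X]=o(1)$. Your choice $s=\Theta(\log n)$ with the explicit check $(1-q^s)^r\ge 1/2$ is a cleaner way to justify replacing the partial product by $\phi(q)$ than the paper's $s=n^{1/10}$, but this is a cosmetic difference rather than a genuinely new route.
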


\begin{proof}
We will only consider values of $m=n-d$ with $m\ge qn/\log^{10}n$.  With this
restriction $r=d/m= n/m-1\le \log^{11} n/q$.  The requirement for the
existence of a $(m,r,s)-\GLR$ is $m>2rs^3$.  With the restrictions on $q$ and
$m$, the requirement is met when $s=n^{1/10}$.
For such a large value of $s$, we are safe if we estimate
$\prod_{i=1}^s(1-q^i)$ by $\phi(q)$.  Accordingly, we can conclude that
$a.a.s.$, $\dim(P)\le d$ if
\begin{equation}\label{eqn:1}
E[X]=nmq\bigl(\phi(q)\bigr)^r\rightarrow 0.
\end{equation}

We note that $\phi(q)<1$, $r=d/n=n/m-1$ and $p=1-q\le 1/2$.  It follows
that inequality~(\ref{eqn:1}) is equivalent to:
\begin{equation}\label{eqn:2}
nmq=o\bigl(e^{n\log(1/\phi(q))/m}\bigr).
\end{equation}

With $z$ set at $n^2q\log(1/\phi(q)$, we note that $z$ and $\log z$ tend to
infinity with $n$. Now set $m=n\log(1/\phi(q))/\log z$.  Since
$e^{\log z} =z = n^2q\log(1/\phi(q))$, inequality~(\ref{eqn:2})
is equivalent to:
\begin{equation}\label{eqn:3}
n^2q\log(1/\phi(q))=o\bigl(n^2q\log(1/\phi(q))\log z\bigr).
\end{equation}

Clearly, this last inequality is satisfied.
\end{proof}

We are reasonably confident that New Upper Bound~(3) is asymptotically
correct. 

\begin{lemma}[New Upper Bound (2)]
If $n^{-1/2}\log n <q\le n^{-1/3}$, then $a.a.s.$,
$\dim(P) <n -qn/(2\log(qn))$.
\end{lemma}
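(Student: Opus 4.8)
The plan is to reuse the one-sided realizer construction from a generalized latin rectangle, but in its degenerate case $s=1$, where no existence theorem for $\GLR$'s is needed. Concretely, I would set $m=qn/(2\log(qn))$, $d=n-m$, and $r=d/m=n/m-1$, treating these as integers per our standing convention (note $r\ge 1$ since $q\le n^{-1/3}$ gives $m\le n^{2/3}$). When $s=1$, conditions~(2) and~(3) in the definition of an $(m,r,s)$-$\GLR$ are vacuous, so an $(m,r,1)$-$\GLR$ exists trivially: any row of length $rm$ in which each integer of $[m]$ appears exactly $r$ times will do, equivalently any $r$-to-one map $\pi\colon[d]\to[m]$. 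In particular the hypothesis $m>2rs^3$ of Lemma~\ref{lem:GLR-2}---which fails for the large $s$ used in New Upper Bound~(3) once $q$ is this small---is irrelevant here. Fixing $T=\{a_1,\dots,a_d\}\subseteq A$ and $M=A-T=\{x_1,\dots,x_m\}$, the general construction then produces $\cgF=\{\sigma_1,\dots,\sigma_d\}$ with $\sigma_j$ the two-element chain $a_j<x_{\pi(j)}$.

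Next I would estimate the first moment of the number $X$ of pairs not realized by $\cgF$. Every pair $(a,a')$ with $a\in T$ is realized, so $X$ counts only pairs $(x_\alpha,a')\in M\times A'$, and for such a pair $\cgF$ fails to realize $(x_\alpha,a')$ exactly when $x_\alpha\not<_P a'$ while $a_j<_P a'$ for each of the $r$ indices $j$ with $\pi(j)=\alpha$. These $r+1$ events involve $r+1$ distinct pairs of $A\times A'$, hence are independent, so the failure probability is $q(1-q)^r$ and $\bbE[X]=nmq(1-q)^r$. Bounding $(1-q)^r\le e^{-qr}$ and substituting $qr=qn/m-q=2\log(qn)-q$ yields
\begin{equation*}
\bbE[X]\le nmq\,e^{q}(qn)^{-2}=\frac{e^{q}}{2\log(qn)},
\end{equation*}
which tends to $0$: indeed $q\le n^{-1/3}$ makes $e^{q}\le 2$ eventually, while $q>n^{-1/2}\log n$ makes $qn\to\infty$ and hence $\log(qn)\to\infty$. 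By Markov's inequality, $a.a.s.$ $X=0$, so $a.a.s.$ $\cgF$ is a one-sided realizer of $P$ and $\dim(P)\le d=n-qn/(2\log(qn))$ (replacing $m$ by $m+1$, which is harmless for the first-moment bound, recovers the strict inequality of the statement).

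I do not expect a serious obstacle here: the argument is a single first-moment computation once the correct family is chosen. The one delicate point is that $m$ is pushed all the way to the threshold $qn/(2\log(qn))$---the constant $1/2$ is exactly what the estimate $\bbE[X]\le e^{q}/(2\log(qn))$ tolerates, since any larger leading constant in $m$ turns $\bbE[X]$ into a growing power of $qn$. It is also worth a sentence of exposition that, in contrast with New Upper Bound~(3), one here wants $s$ as \emph{small} as possible: for $q\le n^{-1/3}$ every partial product $\prod_{i=1}^{s}(1-q^i)$ agrees with $1-q$, and with $\phi(q)$, up to a factor $1+O(q^2)=1+o(1)$, so a larger $s$ buys nothing while only making the $\GLR$ harder to build.
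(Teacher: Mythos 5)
Your argument is correct and is essentially the paper's proof: both set $s=1$ so that the $(m,r,1)$-$\GLR$ exists trivially, specialize equation~(16) to $\bbE[X]=nmq(1-q)^r$, choose $m=qn/(2\log(qn))$, and show the first moment tends to $0$. Your writeup is in fact a bit cleaner than the paper's (you make the independence of the $r+1$ underlying pairs explicit, and you avoid the sign typo in the paper's line $(1-q)^{n/m}=e^{2\log(qn)}=n^2q^2$, which should be $e^{-2\log(qn)}=(qn)^{-2}$).
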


\begin{proof}
We note that for all pairs $(m,r)$, there is a $(m,r,1)-\GLR$. Of course, this simply
means that we put every element of $M$ in second position $d/m=r$ times.
Setting $s=1$ in equation~(\ref{eqn:ev-partial-phi}), we can conclude that
$a.a.s.$, $\dim(P) \le d$ if
\begin{equation}\label{eqn:4}
nmq(1-q)^{d/m}\rightarrow 0.
\end{equation}
Again, we note that $(1-q)^{d/m}= (1-q)^{n/m}/(1-q)\ge (1-q)^{n-m}/2$.
Now set $m = qn/(2\log(qn))$.  Noting that $2\log(qn)\rightarrow\infty$, 
we have 
\[
(1-q)^{n/m}=e^{-qn/m}=e^{2\log(qn)}=n^2q^2.
\]
Therefore, inequality~(\ref{eqn:4}) is equivalent to:
\begin{equation}\label{eqn:5}
n^2q^2=o\bigl(\log(qn) n^2q^2\bigr).
\end{equation}

This last equation holds since $qn\ge\log^2n$ so that $\log(qn)\rightarrow\infty$.
\end{proof}

\section{Applications to the Extremal Problems}\label{sec:eps}

We now return to Question~1 and the problem of finding the correct exponent on
the function $\sa(c)$.  Previously, we reported that we had been
able to use the asymmetric form of the Lov\'{a}sz local lemma to 
raise the lower bound on the exponent of $c$ in $\sa(c)$ from $4/3$ to 
$3/2$.  This unpublished result
was presented at several conferences and seminars and was proved by using the
local lemma to find a rare poset with independence number less than~$2$ and only
moderately large standard example number.  Lemma~\ref{lem:matchings-2} was then 
used to determine the dimension of such a poset.

However, our new bounds allow us to obtain a simple proof of this
same improvement.  Consider the value $q=n^{-1/3}$.  If a bipartite poset
$P\in\bbB(n)$ contains the standard example $S_d$, then its clique size
is at least $d/2$.   Since the clique size of $P\in\Omega(n,p)$ is
$a.a.s.$, less than $3q^{-1}\log(qn)=2n^{1/3}\log n$, it follows that $a.a.s.$, 
$\se(P) <4n^{1/3}\log n$.  On the other hand, with $q=n^{-1/3}$, we know that
$a.a.s.$,
\[
\dim(P)\ge n- 32\bigl(n\log n/q)^{1/2}= n-32n^{2/3}\log^{1/2} n.
\] 
Setting $c=32n^{2/3}\log^{1/2} n$, the upper bound on $\se(P)$ forces $f(c)=
\Omega\bigl(c^{3/2}/ \log ^{3/4} c\bigr)$.

\subsection{Progress on Question 2 for Posets}

We begin with the following elementary result, for which we only outline the
proof. Ironically, when applied it will be for $p<1/2$.

\begin{lemma}\label{lem:matchings-AA'}
Suppose $n^{-1}\log^2 n \le q\le 1/2$.  Then $a.a.s.$, $A$ and
$A'$ can be matched.
\end{lemma}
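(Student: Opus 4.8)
The plan is to notice that, for $P\in\Omega(n,p)$, the bipartite graph $H$ on the vertex classes $A$ and $A'$ in which $a$ is joined to $a'$ precisely when $a\parallel_P a'$ is exactly a random bipartite graph in which each of the $n^2$ potential edges is present independently with probability $q$. To say that $A$ and $A'$ can be matched is to say that $H$ has a perfect matching, and since $q\ge n^{-1}\log^2 n$ is comfortably above the $(\log n)/n$ threshold for this property, the conclusion should hold $a.a.s.$ I would prove it through Hall's theorem.

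By Hall's theorem, $H$ fails to have a perfect matching if and only if there is a set $S\subseteq A$ with $|N(S)|<|S|$, where $N(S)=\{a'\in A':a\parallel_P a'\text{ for some }a\in S\}$; choosing such an $S$ of minimum size we may assume $|N(S)|=|S|-1$. If $|S|>n/2$, set $T=A'\setminus N(S)$: no element of $T$ is incomparable to any element of $S$, so $N(T)\subseteq A\setminus S$, hence $|N(T)|\le n-|S|=|T|-1<|T|$ and $|T|=n-|S|+1\le n/2$. Thus a Hall violator of size at most $n/2+1$ on one of the two sides of $H$ always exists, and by the symmetry of the model under interchanging $A$ and $A'$ it is enough to bound the probability that some $S\subseteq A$ with $1\le|S|=k\le n/2+1$ has $|N(S)|\le k-1$. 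Fixing such a $k$, if we first choose $S$ and then a $(k-1)$-element set $R\supseteq N(S)$, all $k(n-k+1)$ potential edges between $S$ and $A'\setminus R$ must be absent, so
\[
\bbP\bigl(\exists\,S\subseteq A,\ |S|=k,\ |N(S)|\le k-1\bigr)\le\binom{n}{k}\binom{n}{k-1}(1-q)^{k(n-k+1)}\le n^{2k}e^{-qk(n-k+1)}.
\]
For $1\le k\le n/2+1$ we have $n-k+1\ge n/3$, so, using $qn\ge\log^2 n$, the exponent is at most $k(2\log n-qn/3)\le -qkn/4$ once $n$ is large, whence the displayed probability is at most $e^{-qkn/4}$. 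The term $k=1$ is the bottleneck: it equals $n(1-q)^n\le ne^{-qn}\le ne^{-\log^2 n}=o(1)$, which is just the statement that $a.a.s.$ no element of $A$ lies below every element of $A'$ (equivalently, is isolated in $H$). The tail satisfies $\sum_{k\ge 2}e^{-qkn/4}=O(e^{-qn/2})=o(1)$ as well, and a union bound over the two sides of $H$ shows that $a.a.s.$ Hall's condition holds, so $a.a.s.$ $A$ and $A'$ can be matched.

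Nothing in this is delicate. The only steps that want a word of care are the reduction, via taking complements, of the case $|S|>n/2$ to small $|S|$ on the opposite side, and checking that the estimate above is uniform across the whole range of $k$. The sole feature of $q$ used is $q\gg(\log n)/n$, exactly what is needed to forbid a minimal element lying below every maximal element; since the hypothesis $q\ge n^{-1}\log^2 n$ delivers this with room to spare, the argument is insensitive to the upper restriction $q\le 1/2$ and applies verbatim when $q>1/2$, which is the regime in which the lemma is actually invoked.
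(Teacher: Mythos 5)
Your proof is correct, and at the level of overall strategy it matches the paper's outline: both verify Hall's condition for the incomparability bipartite graph by a union bound over subsets. The paper's sketch splits the sets $W$ into three ranges---$|W|<r$, $r\le|W|\le n-r$, and $|W|>n-r$ with $r=\log n/5$---and disposes of the small and large ranges by a minimum-degree argument (the events $E_1,E_2$, together with the observation that if every $a'\in A'$ has at least $r$ incomparable partners then any $W$ with $|A\setminus W|<r$ must have $N(W)=A'$), reserving the union bound for the middle range. You instead observe that a minimum Hall violator has $|N(S)|=|S|-1$, pass to the complement to force $|S|\le n/2+1$, and run a single uniform union bound $\binom{n}{k}\binom{n}{k-1}(1-q)^{k(n-k+1)}$ over all $k$ in that range, noting that the $k=1$ term (isolated vertices) is already the dominant contribution. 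The computations are essentially the same; your complementation step replaces the paper's separate minimum-degree argument and makes the estimate uniform in $k$, which is arguably cleaner. Your closing observation that the bound improves monotonically in $q$---so the lemma applies verbatim for $q>1/2$, the regime in which Section~\ref{sec:eps} actually uses it---matches the paper's reduction to the extreme value $q=n^{-1}\log^2 n$.
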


\begin{proof}
Clearly, it is enough to prove the lemma when $q=n^{-1}\log^2 n$.
The basic idea is to show that $a.a.s.$, Hall's matching condition is satisfied.
For a subset $W\subset A$, let $N(W)$ consist of all elements of $A'$ that
are incomparable with at least one element of $W$.  We want to show
that $a.a.s.$, $|N(W)|\ge|W|$ for every subset $W\subseteq A$.  First,
we take care of the case when $W$ is very small or very large.

Set $r=\log n/5$.
Consider the events $E_1$ that holds if there is some $a\in A$
incomparable with fewer than $r$ elements of $A'$.  Dually, event $E_2$ holds
if there is some element $a'\in A'$ incomparable with fewer than $r$ elements
of $A$.  Simple counting shows that $\bbP(E_i)=o(1)$ for $i=1,2$.

Now consider the event $F$ that holds if
there is some set $W$ with $r\le|W|\le n-r$ such that $|(N(W)|<|W|$.  Then
we show that $\bbP(F)=o(1)$.  Readers will note that this part of the
proof is very similar to the proof (which is provided) of
Lemma~\ref{lem:matchings-defect}.

Once we have shown that $\bbP(E_1)= \bbP(E_2)= o(1)$ and $\bbP(F)=o(1)$, the
proof is complete.
\end{proof}

Recall that $f(d,n)$ is the maximum
value of $\dim(P)$ among all posets on $n$ points with standard example size less
than $d$.

\begin{theorem}
For all $d\ge 3$,
\[
f(d,n)\geq \frac{n^{1- \frac{2d-1}{d(d-1)}}}{8\log n}.
\]
\end{theorem}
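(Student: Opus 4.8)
The plan is to obtain the required poset as a suitably altered random bipartite poset. Fix $d\ge 3$, write $\alpha_d=1-\tfrac1d-\tfrac1{d-1}=1-\tfrac{2d-1}{d(d-1)}$, set $N=\lfloor n/2\rfloor$, and choose $p=N^{-\beta}$ with $\beta=\tfrac{2d-1}{d(d-1)}\in(0,1)$, so that $pN=N^{\alpha_d}$ and $N^{-1+\epsilon}<p\le 1/\log N$ for every sufficiently small $\epsilon>0$. We work in $\Omega(N,p)$ and will pass to an induced subposet, padding the ground set with isolated points to reach $n$ elements if necessary, which affects neither $\dim$ nor $\se$.

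First I would control standard examples by a first moment computation. Since $S_{d'}$ contains $S_d$ as a subposet for every $d'\ge d$, a poset $Q$ satisfies $\se(Q)<d$ precisely when it contains no copy of $S_d$. A copy of $S_d$ in a member of $\bbB(N)$ amounts to a choice of $d$-subsets $B\subseteq A$ and $B'\subseteq A'$ together with a matching of opposite elements, and realizing it requires the $d(d-1)$ cross pairs to be comparabilities and the $d$ diagonal pairs to be incomparabilities; hence for $P\in\Omega(N,p)$,
\[
\mu_d:=\binom{N}{d}^{2}d!\,p^{d(d-1)}(1-p)^{d}=\Theta_d\bigl(N^{2d-\beta d(d-1)}\bigr)=\Theta_d(N),
\]
the last equality by the choice of $\beta$. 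By Markov's inequality, $a.a.s.$\ $P$ has at most $2\mu_d$ copies of $S_d$; deleting one point from each yields an induced subposet $Q=P-D$ with $|D|\le 2\mu_d$ and $\se(Q)<d$.

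It remains to show that $a.a.s.$\ $\dim(Q)\ge n^{\alpha_d}/(8\log n)$. The crude estimate $\dim(Q)\ge\dim(P)-|D|$ is useless here: by Theorem~\ref{thm:old-lb} we have $a.a.s.$\ $\dim(P)>\delta_1\,pN\log(pN)=\Theta(N^{\alpha_d}\log N)$, but $|D|=\Theta(N)$ swamps this (and, since at this density each point of $P$ lies in only $\Theta_d(1)$ copies of $S_d$ in expectation, no choice of transversal $D$ can be substantially smaller than $\mu_d$). So one must lower bound $\dim(Q)$ on its own terms. The key point is that $Q$, though conditioned to omit $S_d$, still behaves like a density-$p$ random bipartite poset: $a.a.s.$\ it retains up- and down-degrees of order $pN$, and $a.a.s.$\ the sides $A\cap Q$ and $A'\cap Q$ remain matchable in the comparability graph of $Q$. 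This last property is where Lemma~\ref{lem:matchings-AA'} is used --- applied with the roles of comparability and incomparability interchanged, which is why it is invoked for $p<1/2$ --- after observing that a density-$p$ random comparability graph stays matchable upon deletion of $D$. With these structural facts in hand one runs the Erd\H{o}s--Kierstead--Trotter lower bound argument from~\cite{ErKiTr91} on $Q$; that argument is insensitive to the conditioning ``$Q$ omits $S_d$'', being a conjunction of events each involving only $2d$ points, and it yields $a.a.s.$\ $\dim(Q)>\delta\,p\,|Q|\log(p|Q|)$ for an absolute constant $\delta>0$. Tracking constants and substituting $N=\lfloor n/2\rfloor$ then gives $a.a.s.$\ $\dim(Q)\ge n^{\alpha_d}/(8\log n)$, so in particular a poset $Q$ with the stated properties exists.

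I expect the dimension lower bound for $Q$ to be the main obstacle: one must show that eliminating every copy of $S_d$ --- which at the optimal density forces deletion of a constant fraction of the ground set, far more than the dimension itself --- does not collapse the dimension. One route is to reprove the relevant part of~\cite{ErKiTr91} in a form robust to the conditioning, using the matching machinery of Lemmas~\ref{lem:matchings-1} and~\ref{lem:2-mixed} together with Lemma~\ref{lem:matchings-AA'}. A second, and possibly cleaner, route is to replace the random comparability graph by a structured one --- for instance a generalized Latin rectangle as produced in Lemma~\ref{lem:GLR-2} --- arranged so that $S_d$ is forbidden by design while the maximum degree, and hence the dimension, is kept at $N^{\alpha_d}$. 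It is this step that should pin the exponent to $1-\tfrac1d-\tfrac1{d-1}$, rather than the strictly smaller $1-\tfrac{2d-1}{d^{2}-d-1}$ that the naive ``delete one point per copy and subtract'' argument delivers.
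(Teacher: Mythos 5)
Your setup is on the right track --- random bipartite poset at the density $p=n^{-(2d-1)/(d(d-1))}$, first-moment count of $S_d$ copies giving $\Theta(n)$ of them, and alteration by deleting a transversal --- and you correctly and honestly identify that the real difficulty is lower-bounding $\dim(Q)$ after the deletion. But the route you propose for that step (re-run the Erd\H{o}s--Kierstead--Trotter lower bound on $Q$ and argue it is insensitive to the conditioning, or alternatively build a structured comparability graph) is a genuine gap: the EKT lower bound is a union bound over all families of short linear orders, which are global objects, so ``each event involves only $2d$ points'' does not yield robustness of that argument under the conditioning ``$Q$ omits $S_d$'', and nothing in the paper carries out your second route either.

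The paper sidesteps this entirely with a much simpler, self-contained counting argument that you did not find. After fixing $p$ as above and setting $t=2\log n/p$, one shows via Markov that $a.a.s.$\ the balanced independence number satisfies $\wttbin(P)<t$ (Lemma~\ref{lem:markov-bcn} read in complementary form), that $a.a.s.$\ the sides $A$ and $A'$ can be matched (Lemma~\ref{lem:matchings-AA'}), and that the expected number of $S_d$-copies is at most $n/6$, so with positive probability all three hold with at most $n/4$ copies of $S_d$. Now the crucial observation --- your Claim~2 is missing --- is purely deterministic: if $\wttbin(P)<t$, then \emph{no single linear extension of any subposet of $P$ can reverse $2t$ of the matched incomparable pairs $(a_i,a'_i)$}, since sorting $2t$ reversed pairs in one extension would exhibit a balanced independent pair of size $t$. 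After relabeling so that each $S_d$-copy hits an index above $n/2$ (possible since there are at most $n/4$ copies), the subposet $Q$ on the first $n/2$ indices has no $S_d$, has $n$ points, and retains $n/2$ matched incomparable pairs; any realizer of $Q$ must reverse all of them, but each linear extension reverses fewer than $2t$, so $\dim(Q)\ge (n/2)/(2t)=pn/(8\log n)$. This avoids any appeal to the EKT dimension lower bound or its robustness, and is precisely the missing idea in your proposal.
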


\begin{proof}
Fix a value of $d\ge3$.
We work in the space $\Omega(n,p)$ with
\begin{equation}\label{eqn:p}
   p  = n^{-\frac{2d-1}{d(d-1)}}.
\end{equation}
We note that $p<1/2$.  Now set
\[
\epsilon=\frac{2d(d-1)}{2d-1}-2\quad\text{and}\quad t=2\log n/p.
\]
We note that $\epsilon>0$.

Using Lemma~\ref{lem:matchings-AA'}, we know that $a.a.s.$, the sets
$A$ and $A'$ can be matched.  Furthermore, the following claim is just
Lemma~\ref{lem:markov-bcn} stated in complementary form.

\medskip
\noindent
\textbf{Claim 1.}\quad For every $\epsilon$ with $0<\epsilon < 1$, $a.a.s.$,
$\wttbin(P)<(2+\epsilon)\log(pn)/p$.

\medskip
Therefore, $a.a.s.$,
\[
\wttbin(P)<(2+\epsilon)\log(pn)/p=(2+\epsilon)\frac{(2d-1)\log n}{(d-1)p}=2\log n/p=t.
\]

Next, we need the following technical claim. 

\medskip
\noindent
\textbf{Claim 2.}\quad
If $P$ is a poset in $\bbB(n)$ and $\wttbin(P)<t$, then 
$P$ does not contain a bipartite
subposet $Q=V\cup V'$ with $|V|=|V'|=2t$ satisfying the following 
condition: The elements of $V$ and $V'$ can be labeled
as $V=\{v_1,\dots,v_{2t}\}$ and 
$V'=\{v'_1,\dots,v'_{2t}\}$ such that there is a linear 
extension $L$ of $Q$ with $v_i>v'_i$ in $L$ for each $i\in[2t]$.

\begin{proof}
We argue by contradiction.  After a relabeling, we 
may assume that $v_1>\dots>v_{2t}$ in $L$. However, this 
implies that $v_i>v'_j$ in $L$ whenever $1\le i\le t$ and 
$t+1\le j\le 2t$, which implies that $\wttbin(P)\ge t$.
\end{proof}

Next, we let $X$ be the random variable that counts the 
number of copies of the standard example $S_d$ in $P$. Then the expected 
value of $X$ is given by
\[
\bbE[X]=\binom{n}{d}^2 d! (1-p)^d p^{d(d-1)} < \frac{n^{2d}}{6}p^{d^2-d}= n/6.
\]

Let $E$ be the event that occurs when $X > n/4$. Then $\bbP(E)<2/3$. It
follows that there is a poset $P\in\Omega(n,p)$ such that (1)~$A$ and $A'$
can be matched; (2)~$\wttbin(P) < t$; and 
(3)~the number of copies of the standard example $S_d$ in $P$ is at most $n/4$.
Let $A=\{a_1,a_2,\dots,a_n\}$ and 
$A'=\{a'_1,a'_2,\dots,a'_n\}$ be labelings that evidence a 
matching between $A$ and $A'$. Without loss of generality, we may assume that any copy of 
$S_d$ contained in $P$ (there are at most $n/4$ of them) contains some point 
in $\{a_i:n/2<i\le n\}\cup\{a'_i:n/2<i\le n\}$. Hence there are 
\textit{no} copies of $S_d$ in the bipartite subposet 
$Q=B\cup B'$ where $B=\{a_i:1\leq i\leq n/2\}$ and 
$B'=\{a'_i:1\leq i\leq n/2\}$. Note that we have $|Q| = n$.

Now let $\cgF$ be any family of linear extensions of $Q$ which is 
a realizer of $P$. Then $\cgF$ must reverse the pairs in 
$\{(a_i,a'_i):1\leq i\leq n/2\}$. However, in view of the claim, no
linear extension can reverse $2t$ of these pairs. 
we conclude that
\[
\dim(P)\ge \dim(Q)\geq \frac{n}{4t}= \frac{pn}{8\log n}=
\frac{n^{1- \frac{2d-1}{d(d-1)}}}{8\log n}.\qedhere
\]
\end{proof}

\section{Some Comments on Open Problems}\label{sec:close}

We view the problem of determining the expected value of $\dim(P)$ for posets 
in $\Omega(n,p)$ when $n^{-4/5}< q\leq n^{-1/3}$ to be a real challenge.
We suspect that our New Upper Bounds are near the truth, but we
cannot rule out the possibility that for almost all $P$, if
the short dimension of $P$ is $d$ as evidenced by a short
realizer $\Sigma$ of size $d$, then $\bbP\bigl(\Sigma(R)\bigr)$
is very small.

Second, although we believe we \emph{know} the expected value
of $n-\dim(P)$ to within a $1+o(1)$ multiplicative factor when 
$1/2\le p< 1-n^{-1/3}$ as specified by New Upper Bound~(1), the challenge
is that there are other constructions besides generalized latin rectangles
that achieve the same bound.

For the first extremal problem, stability analysis, we continue to think 
it likely that the correct exponent for $c$ in the function $\sa(c)$ is~$2$.  As we have
noted, this would be verified if the upper bound on $\dim(P)$ when
$q=n^{-1/2}$ is correct to within a poly-log multiplicative factor
on $n-\dim(P)$.  Alternatively,  one could revisit 
the proof given in~\cite{BHPT16} and try to lower the exponent in the 
inequality $\sa(c)=O(c^2)$.  Success in this effort would of course
imply that our upper bounds on $\dim(P)$ are not as good as we think.

For the second extremal problem, it would be very interesting to 
show that for each $d\ge3$, there is a constant $c_d$, with $0<c_d<1$, such that
$f(n,d)<n^{c_d}$, although it is not clear that such a constant
exists, even when $d=3$.

\section{Acknowledgment}

The authors would like to thank Noga Alon, Alan Frieze, and Tomasz \L uczak
for very helpful communications concerning second moment methods, the concept of 
defect, and Talagrand's inequality.  As noted previously, the heart of the
proof of Lemma~\ref{lem:Luczak} was provided by \L uczak in a personal communication.

\end{document}